\theoremstyle{plain}
\newtheorem{theorem}{Theorem}[section]
\theoremstyle{remark}
\newtheorem{definition}[theorem]{Definition}
\theoremstyle{plain}
\newtheorem{corollary}[theorem]{Corollary}
\newtheorem{lemma}[theorem]{Lemma}
\newtheorem{proposition}[theorem]{Proposition}
\numberwithin{equation}{section}
\def\N{{\mathbb N}}
\def\Z{{\mathbb Z}}
\def\R{{\mathbb R}}
\def\C{{\mathbb C}}
\title{Maximal and quadratic Gaussian Hardy spaces}
\author{Pierre Portal \footnote{This research is supported by the Australian Research Council Discovery grant DP120103692.}}
\begin{document}

\maketitle

\begin{abstract}
Building on the author's recent work with Jan Maas and Jan van Neerven, this paper establishes the equivalence of two norms (one using a maximal function, the other a square function) used to define a Hardy space on $\R^{n}$ with the gaussian measure, that is adapted to the Ornstein-Uhlenbeck semigroup. 
In contrast to the atomic Gaussian Hardy space introduced earlier by Mauceri and Meda, the $h^{1}(\R^{n};d\gamma)$ space studied here is such that the Riesz transforms are bounded from $h^{1}(\R^{n};d\gamma)$ to
$L^{1}(\R^{n};d\gamma)$.
This gives a gaussian analogue of the seminal work of Fefferman and Stein in the case of the Lebesgue measure and the usual Laplacian.\\
{\it 2010 Mathematics Subject Classification:} 42B25,42B30.
\end{abstract}

\section{Introduction}
In recent years, the real variable theory of Hardy spaces, which originates from the work of Fefferman and Stein \cite{fs}, has been extend to a variety of new settings.
These developments involve replacing the euclidean Laplacian with a different semigroup generator $L$, and the space $\R^{n}$ endowed with the Borel algebra and the Lebesgue measure with a different metric measure space $(M,d,\mu)$. Prominent examples include Hofmann and Mayboroda's work \cite{hm} on the euclidean space with $\Delta$ replaced by a more general divergence form second order elliptic differential operator with bounded measurable coefficients, and Auscher-McIntosh-Russ's Hardy  spaces of differential forms associated with the Hodge Laplacian on a Riemannian manifold \cite{amr}.
These results rely heavily on two assumptions. At the level of the metric measure space, one requires the doubling property:
there exists $C>0$ such that for all $x \in M$ and all $r>0$:
$$
\mu(B(x,2r)) \leq C \mu(B(x,r)).
$$
At the level of the semigroup $(e^{tL})_{t \geq 0}$, one requires some heat kernel estimates or, at least, some appropriate $L^{2}$ off-diagonal decay of the form
$$
\|1_{E}e^{tL}(1_{F}u)\|_{2} \leq C (1+\frac{d(E,F)^{2}}{t})^{-k}\|1_{F}u\|_{2},
$$
where $E,F$ are Borel sets, $1_{E},1_{F}$ denote the corresponding characteristic functions, $u \in L^{2}$, $k>0$, $t>0$ and $C$ is independent of $E,F,t$ and $u$.
This paper is concerned with the gaussian case: the metric measure space is $\R^{n}$ with the gaussian measure $d\gamma(x) = \pi^{-\frac{n}{2}}e^{-|x|^{2}}dx$ and the operator is the Ornstein-Uhlenbeck operator defined by
$$
Lf(x) := \frac{1}{2}\Delta f(x)-x.\nabla f(x), \quad x \in \R^{n}.
$$
This setting is motivated by stochastic analysis and has a long history (see the survey \cite{survey}).
Hardy spaces in this context were first introduced by Mauceri and Meda in \cite{mm}. Their work is striking because the gaussian measure is not doubling, 
and the Ornstein-Uhlenbeck semigroup does not satisfy the kernel bounds required to apply the non-doubling theory of Tolsa \cite{t}.
While \cite{mm} contains highly interesting results, it does not provide a fully satisfying theory. This is due to the fact that Mauceri-Meda's Hardy spaces 
$h^{1}_{\text{at}}(\gamma)$ are defined via an atomic decomposition that may not relate to the Ornstein-Uhlenbeck operator as well as classical Hardy spaces relate to the usual Laplacian (see \cite{fs}). In particular, the fact, proven in \cite{mms}, that some associated Riesz transforms are not bounded from $h^{1}_{\text{at}}(\gamma)$ to $L^{1}(\gamma)$ in dimension greater than $1$ is problematic. More generally, Mauceri-Meda's $h^{1}_{\text{at}}(\gamma)$ spaces provide a good endpoint to the $L^p$ scale from the interpolation point of view, but their theory does not contain all the machinery that makes Fefferman-Stein \cite{fs} so outstanding, and has proven useful in a range of applications, especially to partial differential equations.\\

In \cite{mnp,mnp2}, Jan Maas, Jan van Neerven, and the author have started the development of such a complete theory. 
This involves adequate dyadic cubes, covering lemmas of Whitney type, related tent spaces and their atomic decomposition, and techniques to estimate the following non-tangential maximal functions and conical square functions:
\begin{equation*}
\begin{split}
T^{*}_{a}u(x):=\underset{(y,t)\in \Gamma^{a}_{x}(\gamma)}{\sup}|e^{t^{2}L}u(y)|,\\
S_{a}u(x) =
 \Big(\int_{\Gamma^{a} _x(\gamma)} \frac1{\gamma(B(y,t))}|t\nabla
e^{t^{2}L}u(y)|^{2}\,d\gamma(y)\,\frac{dt}{t}\Big)^\frac12,
\end{split}
\end{equation*}
where 
$$\Gamma ^{a}_{x}(\gamma) := \Big\{(y,t) \in \R^{n}\times(0,\infty)\colon 
|y-x|<t<a m(x)\Big\}$$
is the {\em admissible cone} based at the point
$x\in\R^n$, $m(x):=\min\big\{1, \frac1{|x|}\big\}$ is the corresponding admissibility function, and $a$ the admissibility parameter.
From the point of view of Hardy space theory, one defines $h^{1}_{\text{max},a}(\gamma)$ as the completion of the space of smooth compactly supported functions 
$C^{\infty}_{c}(\R^{n})$ with respect to $$\|u\|_{h^{1}_{\text{max},a}(\gamma)}:=\|T_{a} ^{*}u\|_{L^{1}(\gamma)},$$ and $h^{1}_{\text{quad},a}(\gamma)$ as the completion of $C^{\infty}_{c}(\R^{n})$
with respect to $$\|u\|_{h^{1}_{\text{quad},a}(\gamma)}:=\|S_{a}u\|_{L^{1}(\gamma)}+\|u\|_{L^{1}(\gamma)}.$$
A key result should be that these two norms are equivalent for some choice of $a$. However, \cite{mnp2} only gives one inequality: 
$\|S_{a}u\|_{1} \leq C\|T_{a'}^{*}u\|_{1}$, for some $C,a'>0$ independent of $u$ (actually \cite{mnp2} gives a slightly stronger inequality involving an averaged version of $T^{*}_{a}u$). 
The purpose of this paper is to prove the reverse inequality to establish the following result.
\begin{theorem}
\label{thm:main}
Given $a>0$, there exists $a'>0$ such that
$h^{1}_{\text{quad},a}(\gamma)=h^{1}_{\text{max},a'}(\gamma)$.
\end{theorem}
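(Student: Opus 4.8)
The plan is to establish the inequality that does not already follow from \cite{mnp2}: there is $a'>0$ such that
\[
\|T^{*}_{a'}u\|_{L^{1}(\gamma)}\ \lesssim\ \|S_{a}u\|_{L^{1}(\gamma)}+\|u\|_{L^{1}(\gamma)},\qquad u\in C^{\infty}_{c}(\R^{n}).
\]
The reverse inequality on $C^{\infty}_{c}(\R^{n})$ is already available: $\|S_{a}u\|_{1}\lesssim\|T^{*}_{a'}u\|_{1}$ is proved in \cite{mnp2}, while $\|u\|_{1}\le\|T^{*}_{a'}u\|_{1}$ because $e^{t^{2}L}u(x)\to u(x)$ as $t\to0$, so $|u(x)|\le T^{*}_{a'}u(x)$ pointwise. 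Once both directions are known on $C^{\infty}_{c}(\R^{n})$ the two norms are equivalent there, hence their completions coincide, which is Theorem \ref{thm:main}.

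The main ingredient is a Calder\'on-type reproducing formula adapted to $L$. Let $\delta$ denote the divergence operator, i.e.\ the adjoint of $\nabla$ in $L^{2}(\gamma)$, so that $L=-\tfrac12\delta\nabla$; using $\frac{d}{ds}e^{2s^{2}L}=4sLe^{2s^{2}L}$, the commutator relation between $\nabla$ and $L$, and $e^{s^{2}L}u\to\int u\,d\gamma$ as $s\to\infty$, one obtains, for $u\in C^{\infty}_{c}(\R^{n})$,
\[
u-\int u\,d\gamma\ =\ \int_{0}^{\infty}R_{s}\bigl(s\nabla e^{s^{2}L}u\bigr)\,\frac{ds}{s},\qquad R_{s}:=2s\,e^{s^{2}L}\delta,
\]
with $\|R_{s}\|_{L^{2}(\gamma)\to L^{2}(\gamma)}\lesssim1$ uniformly in $s$. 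The integrand $G(y,s):=s\nabla e^{s^{2}L}u(y)$ is exactly the field whose conical square function is $S_{a}u$, and the term $\int u\,d\gamma$ contributes at most $\|u\|_{1}$ to $\|T^{*}_{a'}u\|_{1}$ since $T^{*}_{a'}$ of a constant is that constant. I would split $G=G_{\mathrm{loc}}+G_{\mathrm{glob}}$ with $G_{\mathrm{loc}}:=G\,\mathbf{1}_{\{s<am(y)\}}$, so that $G_{\mathrm{loc}}$ lies in the local tent space $t^{1}(\gamma)$ of \cite{mnp,mnp2} with $\|G_{\mathrm{loc}}\|_{t^{1}(\gamma)}\lesssim\|S_{a}u\|_{1}$, while $G_{\mathrm{glob}}$ is supported in the non-admissible region $\{s\ge am(y)\}$. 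On that region the kernel of $s\nabla e^{s^{2}L}$, and then of $e^{t^{2}L}R_{s}$ and $t\nabla e^{t^{2}L}R_{s}$, has Gaussian rather than merely polynomial decay (the ``local versus global'' dichotomy underlying \cite{mm,mnp}); summing those bounds against $d\gamma$ should show the contribution of $G_{\mathrm{glob}}$ to $\|T^{*}_{a'}u\|_{1}$ is $\lesssim\|u\|_{1}$.

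For the local part I would invoke the atomic decomposition of $t^{1}(\gamma)$ from \cite{mnp2}: $G_{\mathrm{loc}}=\sum_{j}\lambda_{j}A_{j}$ with $\sum_{j}|\lambda_{j}|\lesssim\|S_{a}u\|_{1}$ and each $A_{j}$ a tent-space atom supported in the tent over an admissible ball $B_{j}=B(x_{j},r_{j})$, $r_{j}\le m(x_{j})$, with $\|A_{j}\|_{L^{2}(d\gamma\,\frac{ds}{s})}\le\gamma(B_{j})^{-1/2}$. It then suffices to show $u_{j}:=\int_{0}^{\infty}R_{s}A_{j}(\cdot,s)\frac{ds}{s}$ satisfies $\|T^{*}_{a'}u_{j}\|_{L^{1}(\gamma)}\lesssim1$ uniformly in $j$, for then $\|T^{*}_{a'}u\|_{1}\lesssim\sum_{j}|\lambda_{j}|+\|u\|_{1}\lesssim\|S_{a}u\|_{1}+\|u\|_{1}$. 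Fix a dilation constant $C$. On $CB_{j}$ one uses Cauchy--Schwarz together with the $L^{2}(\gamma)$-boundedness of $T^{*}_{a'}$ (from \cite{mnp}) and of the synthesis map $G\mapsto\int R_{s}G(\cdot,s)\frac{ds}{s}$ (a quadratic estimate for the sectorial operator $L$), and the crucial doubling property of admissible balls $\gamma(CB_{j})\lesssim\gamma(B_{j})$, to get $\int_{CB_{j}}T^{*}_{a'}u_{j}\,d\gamma\lesssim\gamma(CB_{j})^{1/2}\|u_{j}\|_{2}\lesssim\gamma(CB_{j})^{1/2}\gamma(B_{j})^{-1/2}\lesssim1$. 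Away from $CB_{j}$ one expands $e^{t^{2}L}u_{j}(y)=\int_{0}^{r_{j}}\bigl(e^{t^{2}L}R_{s}A_{j}(\cdot,s)\bigr)(y)\frac{ds}{s}$ and uses the off-diagonal decay of $e^{t^{2}L}R_{s}=2s\,e^{(t^{2}+s^{2})L}\delta$ in $\mathrm{dist}(y,B_{j})/\max(t,s)$ --- the built-in smoothing of $R_{s}$ supplying the cancellation a $t^{1}$-atom does not carry on its own --- together with $s<r_{j}\le m(x_{j})$ and the constraint $t<a'm(x)$ on the cone, to bound $T^{*}_{a'}u_{j}(x)$ by a quantity decaying rapidly in $\mathrm{dist}(x,B_{j})$; integrating it over $\R^{n}\setminus CB_{j}$ against $d\gamma$ closes the uniform estimate.

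The hard part, and where the Gaussian setting genuinely departs from \cite{fs} and from the doubling theory, is this last estimate. One must (i) assemble the correct off-diagonal bounds for the compositions $e^{t^{2}L}R_{s}$ and $t\nabla e^{t^{2}L}R_{s}$, which carry Euclidean-type Gaussian factors only in the admissible range $t,s\lesssim m(\cdot)$ and must be patched with the sharper global decay beyond it; (ii) sum these bounds over the dyadic annuli around $x_{j}$, where $\gamma$ fails to be doubling --- the saving grace being that the Gaussian measure of an annulus far from the origin is super-exponentially small, so that, once the admissibility of $B_{j}$ has been used to keep all relevant scales below $m(x_{j})$, the resulting series converges with a bound independent of $j$; and (iii) keep track of apertures, passing from the aperture implicit in the $t^{1}(\gamma)$-atoms to the aperture $a'$ of $T^{*}_{a'}$ via the change-of-aperture estimates of \cite{mnp}, which is what forces $a'$ to be taken larger than $a$.
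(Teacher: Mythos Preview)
Your overall strategy is the paper's: a reproducing formula, a local/global split of the square-function field $G(y,s)=s\nabla e^{s^{2}L}u(y)$, the $t^{1,2}(\gamma)$ atomic decomposition on the local part, a molecular estimate for the image of each atom, and separate $L^{1}$ control of the remainders. The gap is in the specific reproducing formula. The paper does \emph{not} use $u-\int u\,d\gamma=\int_{0}^{\infty}2s\,e^{s^{2}L}\delta(s\nabla e^{s^{2}L}u)\,\frac{ds}{s}$; it uses
\[
u-\int u\,d\gamma \;=\; C\int_{0}^{\infty}(t^{2}L)^{N+1}e^{\frac{(1+a^{2})t^{2}}{\alpha}L}u\,\frac{dt}{t}
\]
with two free parameters $N\in\N$ and $\alpha>0$, and both are essential. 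For the off-diagonal bound feeding your far-from-$B_{j}$ estimate, the paper gets decay $\exp\bigl(-\tfrac{\alpha}{2^{6}e^{8}}4^{k}(r_{B}/t)^{2}\bigr)$ (Lemma~\ref{lem:od}) and must take the constant in front of $4^{k}$ larger than $2^{11}$ to beat the growth $\gamma(2^{k+1}B)\le e^{2^{9}\cdot4^{k}}\gamma(B)$ from Lemma~\ref{lem:mm}; this is why $\alpha$ is pushed above $2^{38}$. With your $R_{s}=2se^{s^{2}L}\delta$ the off-diagonal constant is fixed of order~$1$ and the annulus sum diverges. Your ``saving grace'' remark is actually backwards: the dangerous annuli $C_{k}(B)$ are not those far from the origin but those extending \emph{toward} the origin, where $e^{-|x|^{2}}\gg e^{-|c_{B}|^{2}}$, so $\gamma(C_{k}(B))/\gamma(B)$ is super-exponentially \emph{large}.

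The power $N$ is needed for a separate reason. When estimating $T^{*}_{a'}u_{j}$ on $C_{k}(B)$ at times $s\gtrsim r_{B}$ with $x$ still in the local region $\{m(x)\gtrsim 2^{k}r_{B}\}$ (the term $I''_{k,l}$ in Theorem~\ref{thm:mol}), off-diagonal decay of $e^{s^{2}L}$ is useless because $s$ can be as large as $2^{k}r_{B}$. The paper writes the molecule as $f=L^{N}\tilde f$, moves $L^{N}$ onto $e^{s^{2}L}$ to gain $s^{-2N}$, and then needs $4^{-kN}$ to dominate the polynomial factor $2^{kn/2}$ coming from $\gamma(C_{k}(B)\cap N_{\tau'}(B))\lesssim 2^{kn}\gamma(B)$ (Lemma~\ref{lem:region}); this forces $N>n/4$. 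Your $R_{s}$ supplies a single derivative, i.e.\ essentially $N=0$ on the paper's molecule scale, which is insufficient in every dimension. Replacing your formula by one carrying $(t^{2}L)^{N+1}e^{t^{2}L/\alpha}$ and then tracking $\alpha$ and $N$ through all the estimates is exactly the content of Sections~3--5.
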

Since $h^{1}_{\text{quad},a} = h^{1}_{\text{quad},1}$ for all $a>1$ (as a consequence of \cite[Theorem 3.8]{mnp}), we then call $h^{1}(\gamma):=h^{1}_{\text{quad},2}$ the Gaussian Hardy space.
In the final section, the techniques used in the proof of the above reverse inequality are used again to prove that the Riesz transforms associated with $L$ are bounded on $h^{1}(\gamma)$.
The proof is based on a version of Calder\'on reproducing formula:
$$
u = C \int \limits _{0} ^{\infty} (t^{2}L)^{N+1}e^{\frac{(1+a^{2})t^{2}}{\alpha}L}u \frac{dt}{t}+\int \limits _{\R^{n}}u d\gamma,
$$
for $u \in L^{2}$ and some suitable constants $N,C$ and $\alpha$. The part 
$$J_{1}u(x):=\int \limits _{0} ^{m(x)} (t^{2}L)^{N+1}e^{\frac{(1+a^{2})t^{2}}{\alpha}L}u(x) \frac{dt}{t}$$ 
is treated via the atomic decomposition of tent spaces established in \cite{mnp}, leading to the estimate 
$\|J_{1}u\|_{h^{1}_{\text{max},a'}(\gamma)} \leq C'(\|u\|_{h^{1}_{\text{quad},a}(\gamma)}+\|u\|_{L^{1}(\gamma)})$.
The remainder term
$$J_{\infty}u(x):=\int \limits _{m(x)} ^{\infty} (t^{2}L)^{N+1}e^{\frac{(1+a^{2})t^{2}}{\alpha}L}u(x) \frac{dt}{t}$$ 
is a priori problematic, as the boundedness of the square function norm $\|S_{a}u\|_{1}$ does not give information about it.
It turns out, however, that properties of the kernel of the Ornstein-Uhlenbeck semigroup give the estimate
$\|J_{\infty}u\|_{h^{1}_{\text{max},a'}(\gamma)} \leq C''\|u\|_{L^{1}(\gamma)}$. This phenomenon is typical of local Hardy spaces, as can be seen, for instance, in \cite{art} and \cite{jyz}.\\

The paper is organised as follows. In Section 2, we recall the necessary definitions and known results, and set up the proof,
decomposing $J_{1}u$ into a main term and two remainder terms similar to $J_{\infty}u$.
In Section 3, we prove the relevant kernel estimates, and deduce appropriate off-diagonal bounds.
In Section 4, we show that the main term can be decomposed as a sum of molecules,
and estimate the $h^{1}_{\text{max}}$ norm of molecules.
In Section 5, we estimate $J_{\infty}u$ and the remainder terms, and thus conclude the proof.
In Section 6, we use the same techniques to prove that the Riesz transforms associated with $L$ are bounded on $h^{1}(\gamma)$.

\subsection*{Acknowledgement}
This work completes the first part of a larger project, started in \cite{mnp,mnp2} in collaboration with Jan Maas and Jan van Neerven. It owes a great deal to our discussions.
Thanks to Adam Sikora, for correcting an incorrect comment in an earlier version of the  introduction.
Many thanks also go to Jonas Teuwen, who read a draft of this paper as part of his master thesis, under the supervision of Jan van Neerven at the TU Delft. Numerous misprints and omissions have been fixed thanks to his careful reading.

\section{Preliminaries}
We start by recalling some basic properties of the Ornstein-Uhlenbeck operator $L$ (details can be found in the survey paper \cite{survey}).
On $L^{2}(\gamma)$, $L$ generates a semigroup for which the Hermite polynomials $(H_{\alpha})_{\alpha \in \Z_{+}^{n}}$ form an orthonormal basis of eigenfunctions. Using this chaos decomposition, we have:
$$
e^{tL}(\sum \limits _{\beta \in \Z_{+}^{n}} c_{\beta}H_{\beta}) = \sum \limits _{\beta \in \Z_{+}^{n}} e^{-t|\beta|}c_{\beta}H_{\beta},
$$  
for $c_{\beta} \in \C$ and $|\beta|:= \sum \limits _{j=1} ^{n} \beta_{j}$.
As a direct consequence, we have the following Calder\'on reproducing formula.
\begin{lemma}
\label{lem:repro}
For all $N \in \N$ and $a,\alpha>0$, there exists $C>0$ such that for all $u \in L^{2}(\gamma)$
$$
u = C \int \limits _{0} ^{\infty} (t^{2}L)^{N+1}e^{\frac{(1+a^{2})t^{2}}{\alpha}L}u \frac{dt}{t}+\int \limits _{\R^{n}}u d\gamma.
$$
\end{lemma}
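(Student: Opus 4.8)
\medskip
\noindent\emph{Proof sketch (plan).}
I would argue entirely in $L^{2}(\gamma)$ via the chaos decomposition of $L$. Write $u=\sum_{\beta\in\Z_{+}^{n}}c_{\beta}H_{\beta}$ with $c_{\beta}=\langle u,H_{\beta}\rangle_{L^{2}(\gamma)}$; since $H_{0}\equiv 1$ and $\gamma$ is a probability measure, $\int_{\R^{n}}u\,d\gamma=c_{0}$. Because $LH_{\beta}=-|\beta|H_{\beta}$, applying the bounded operator $(t^{2}L)^{N+1}e^{\frac{(1+a^{2})t^{2}}{\alpha}L}$ termwise gives, for each fixed $t>0$,
\[
(t^{2}L)^{N+1}e^{\frac{(1+a^{2})t^{2}}{\alpha}L}u=(-1)^{N+1}\sum_{\beta\neq 0}(t^{2}|\beta|)^{N+1}e^{-\frac{(1+a^{2})t^{2}}{\alpha}|\beta|}c_{\beta}H_{\beta},
\]
the frequency $\beta=0$ dropping out because of the factor $|\beta|^{N+1}$.

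The heart of the matter is that the $\frac{dt}{t}$-integral of each scalar coefficient is \emph{independent of $\beta$}. For $\beta\neq 0$ the change of variables $x=t^{2}|\beta|$ (so $\frac{dt}{t}=\frac{dx}{2x}$) yields
\[
\int_{0}^{\infty}(t^{2}|\beta|)^{N+1}e^{-\frac{(1+a^{2})t^{2}}{\alpha}|\beta|}\,\frac{dt}{t}=\frac12\int_{0}^{\infty}x^{N}e^{-\frac{1+a^{2}}{\alpha}x}\,dx=\frac{N!}{2}\Bigl(\frac{\alpha}{1+a^{2}}\Bigr)^{N+1}.
\]
Granting for the moment that one may integrate the previous display term by term, this shows
\[
\int_{0}^{\infty}(t^{2}L)^{N+1}e^{\frac{(1+a^{2})t^{2}}{\alpha}L}u\,\frac{dt}{t}=(-1)^{N+1}\frac{N!}{2}\Bigl(\frac{\alpha}{1+a^{2}}\Bigr)^{N+1}\sum_{\beta\neq 0}c_{\beta}H_{\beta}=(-1)^{N+1}\frac{N!}{2}\Bigl(\frac{\alpha}{1+a^{2}}\Bigr)^{N+1}\bigl(u-c_{0}\bigr),
\]
so the asserted identity follows upon adding back $c_{0}=\int_{\R^{n}}u\,d\gamma$, with $C=(-1)^{N+1}\frac{2}{N!}\bigl(\frac{1+a^{2}}{\alpha}\bigr)^{N+1}$. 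It is precisely the $\beta$-independence above that provides one single constant $C$ simultaneously inverting every frequency.

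It remains to justify the termwise integration, i.e. that $t\mapsto(t^{2}L)^{N+1}e^{\frac{(1+a^{2})t^{2}}{\alpha}L}u$ is Bochner integrable on $(0,\infty)$ against $\frac{dt}{t}$ and that its integral is computed coefficient by coefficient. By Parseval,
\[
\Bigl\|(t^{2}L)^{N+1}e^{\frac{(1+a^{2})t^{2}}{\alpha}L}u\Bigr\|_{L^{2}(\gamma)}=\Bigl(\sum_{\beta\neq 0}(t^{2}|\beta|)^{2(N+1)}e^{-\frac{2(1+a^{2})t^{2}}{\alpha}|\beta|}|c_{\beta}|^{2}\Bigr)^{1/2},
\]
and I would apply Minkowski's integral inequality with the exponent pair $L^{1}((0,\infty),\frac{dt}{t})$ then $\ell^{2}(\beta)$ (legitimate since $1\le 2$), which together with the computation above bounds $\int_{0}^{\infty}\|(t^{2}L)^{N+1}e^{\frac{(1+a^{2})t^{2}}{\alpha}L}u\|_{L^{2}(\gamma)}\,\frac{dt}{t}$ by
\[
\Bigl(\sum_{\beta\neq 0}|c_{\beta}|^{2}\Bigl(\int_{0}^{\infty}(t^{2}|\beta|)^{N+1}e^{-\frac{(1+a^{2})t^{2}}{\alpha}|\beta|}\,\tfrac{dt}{t}\Bigr)^{2}\Bigr)^{1/2}=\frac{N!}{2}\Bigl(\frac{\alpha}{1+a^{2}}\Bigr)^{N+1}\Bigl(\sum_{\beta\neq 0}|c_{\beta}|^{2}\Bigr)^{1/2}\le\frac{N!}{2}\Bigl(\frac{\alpha}{1+a^{2}}\Bigr)^{N+1}\|u\|_{L^{2}(\gamma)}<\infty.
\]
Hence the Bochner integral exists, and pulling the bounded functional $\langle\,\cdot\,,H_{\beta}\rangle_{L^{2}(\gamma)}$ under the integral sign reproduces exactly the scalar identity used above. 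I do not expect a genuine obstacle in this lemma; the only step requiring care is this convergence/interchange argument, and it goes through cleanly precisely because the relevant scalar integral is not merely bounded but constant in $\beta$.
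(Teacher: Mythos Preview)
Your overall strategy---expand $u$ in Hermite polynomials and compute coefficient by coefficient---is exactly what the paper has in mind when it calls the lemma a ``direct consequence'' of the chaos decomposition, and your scalar computation of the constant is correct.

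The justification step, however, contains a genuine error. Minkowski's integral inequality runs the other way: for nonnegative $F$ and $p\ge 1$ one has
\[
\Bigl(\sum_{\beta}\Bigl(\int_{0}^{\infty}F(t,\beta)\,\tfrac{dt}{t}\Bigr)^{p}\Bigr)^{1/p}\le \int_{0}^{\infty}\Bigl(\sum_{\beta}F(t,\beta)^{p}\Bigr)^{1/p}\tfrac{dt}{t},
\]
not the reverse; your parenthetical ``legitimate since $1\le 2$'' is precisely the wrong condition. In fact the Bochner integral $\int_{0}^{\infty}\|(t^{2}L)^{N+1}e^{ct^{2}L}u\|_{L^{2}(\gamma)}\,\tfrac{dt}{t}$ can diverge for $u\in L^{2}(\gamma)$. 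Take coefficients supported on a sparse sequence $|\beta_{m}|=k_{m}$ (say $k_{m}=2^{2^{m}}$) with $|c_{\beta_{m}}|=1/m$, so that $\sum_{m}|c_{\beta_{m}}|^{2}<\infty$. On the disjoint intervals where $t^{2}k_{m}\in[1/2,2]$ one has $\|(t^{2}L)^{N+1}e^{ct^{2}L}u\|_{2}\ge c_{0}|c_{\beta_{m}}|$, and each such interval has $\tfrac{dt}{t}$-measure $\tfrac{1}{2}\log 4$; summing gives a lower bound $\gtrsim\sum_{m}1/m=\infty$.

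The repair is painless and does not change your argument: read the $t$-integral as the $L^{2}(\gamma)$-limit of $\int_{\epsilon}^{R}(t^{2}L)^{N+1}e^{\frac{(1+a^{2})t^{2}}{\alpha}L}u\,\tfrac{dt}{t}$ as $\epsilon\downarrow 0$, $R\uparrow\infty$. For each $\beta\neq 0$ the truncated scalar integral is bounded in absolute value by the full one (your constant $\tfrac{N!}{2}(\tfrac{\alpha}{1+a^{2}})^{N+1}$) and converges to it, so dominated convergence in $\ell^{2}$ yields convergence of the truncated vector integrals to $(-1)^{N+1}\tfrac{N!}{2}(\tfrac{\alpha}{1+a^{2}})^{N+1}(u-c_{0})$ in $L^{2}(\gamma)$. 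Equivalently, simply invoke the bounded Borel functional calculus for the self-adjoint operator $L$.
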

On $L^{p}(\R^{n},\gamma)$, for $1\leq p<\infty$, $L$ generates the semigroup defined by
$$
e^{tL}f(x) := \int \limits _{\R^{n}} M_{t}(x,y)f(y)dy,
$$
where $f \in L^{p}(\gamma)$, $x \in \R^{n}$, and $M_{t}$ denotes the Mehler kernel
$$
M_{t}(x,y) := \pi^{-\frac{n}{2}}(1-e^{-2t})^{-\frac{n}{2}}\exp(-\frac{|e^{-t}x-y|^{2}}{1-e^{-2t}}).
$$
A well know technique in gaussian harmonic analysis, going back to \cite{m}, consists in splitting kernels such as the Mehler kernel into a local and a global part, the idea being that the local part behaves like a Caldero\'n-Zygmund operator, and the global part has some specific decay properties. The local region is defined as
$$
N_{a} := \{(x,y) \in \R^{2n} \;;\; |x-y| \leq a m(x)\},
$$
where $a>0$ and  $m(x):=\min\big\{1, \frac1{|x|}\big\}$.
A typical result obtained by this technique is the weak-type 1-1 of the local part of the Hardy-Littlewood maximal operator and the strong type 1-1 of its global part, proven by Harboure, Torrea, and Vivani in \cite[Theorem 2.7]{htv}.
In this paper, we will use the corresponding result for the non-tangential maximal function.
Before stating this result, we recall \cite[Lemma 2.3]{mnp}, and introduce some notation.
\begin{lemma}
\label{lem:mnp1}
Let $a>0$, and $x,y \in \R^{n}$.
If $|x-y|<am(x)$, then $m(x)\le (1+a)m(y)$ and $m(y)\le (2+2a)m(x)$.
\end{lemma}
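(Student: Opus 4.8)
The plan is to pass to the reciprocal $M(x):=1/m(x)=\max\{1,|x|\}$, for which the two claimed estimates become $M(y)\le(1+a)M(x)$ and $M(x)\le(2+2a)M(y)$. The key structural fact is that $M$ is $1$-Lipschitz: it is the composition of the $1$-Lipschitz map $x\mapsto|x|$ with the $1$-Lipschitz map $s\mapsto\max\{1,s\}$ on $[0,\infty)$, so $|M(x)-M(y)|\le|x-y|$ for all $x,y\in\R^n$. (If one prefers to avoid the Lipschitz language, the same bound drops out of a one-line case distinction according to whether $|x|,|y|$ are $\le 1$ or $>1$, using $\big||x|-|y|\big|\le|x-y|$.)

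I would first record the trivial but decisive observation $M(x)\ge 1$, so that the hypothesis $|x-y|<a\,m(x)=a/M(x)$ gives in particular $|x-y|<a$. For the first inequality, combine the Lipschitz bound with the hypothesis:
\[
M(y)\le M(x)+|x-y|<M(x)+\frac{a}{M(x)}\le M(x)+aM(x)=(1+a)M(x),
\]
where the last step uses $1/M(x)\le 1$; dividing through, this is precisely $m(x)\le(1+a)m(y)$ (indeed with strict inequality). For the second inequality, symmetrically,
\[
M(x)\le M(y)+|x-y|<M(y)+a\le M(y)+aM(y)=(1+a)M(y)\le(2+2a)M(y),
\]
using $|x-y|<a$ and then $M(y)\ge 1$; dividing gives $m(y)\le(2+2a)m(x)$ — in fact the sharper bound $m(y)\le(1+a)m(x)$, so the stated constant is comfortably met.

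There is no genuine obstacle here: the lemma is an elementary metric estimate, and the only point requiring a little care is that the hypothesis $|x-y|<a\,m(x)$ controls $|x-y|$ by the absolute constant $a$ (not merely by $a$ times something tending to $0$), which is exactly what allows the error term to be absorbed into a multiple of $M(y)\ge 1$ in the second display. Should a fully self-contained, Lipschitz-free write-up be preferred, I would instead split into the cases $|x|\le 1$ and $|x|>1$, invoke $|y|\le|x|+|x-y|$ and $|x|\le|y|+|x-y|$ together with the explicit formulas $m(x)=1$, resp.\ $m(x)=1/|x|$, and in each subcase bound $|y|$ (or $|x|$) above by a multiple of the other; this reproduces the same constants with only slightly more bookkeeping.
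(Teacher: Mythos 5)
Your proof is correct. The paper itself gives no argument for this lemma (it is quoted verbatim from \cite{mnp}), and your reduction to the $1$-Lipschitz function $M(x)=1/m(x)=\max\{1,|x|\}$ together with the observation $M\ge 1$ is a clean, complete justification of both inequalities; it even yields the sharper constant $m(y)\le(1+a)m(x)$ in the second bound, so the stated constant $2+2a$ is met with room to spare.
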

Given $A,a>0$, we define
$$\Gamma^{(A,a)}_{x}(\gamma) := \Big\{(y,t) \in \R^{n}\times(0,\infty)\colon 
|y-x|<At, \;\text{and}\; t\leq am(x)\Big\},$$
and call $\Gamma^{(A,a)}_{x}(\gamma)$ the admissible cone with aperture $A$ and admissibility parameter $a$ based at the point $x$.
To simplify notation we write $\Gamma_{x}(\gamma):=\Gamma^{(1,1)}_{x}(\gamma)$ and $\Gamma^{a}_{x}(\gamma):=\Gamma^{(1,a)}_{x}(\gamma)$.
Non-tangential maximal functions are pointwise dominated by the Hardy-Littlewood maximal function. This is the following lemma, proven by Pineda and Urbina in \cite[Lemma 1.1]{pu} (for the particular choice $(A,a)=(1,\frac{1}{2})$, but the proof carries over to different apertures and admissibility parameters).
\begin{lemma} 
\label{lem:pu}
Let $A,a>0$. There exists $C>0$ such that for all $x \in \R^{n}$ and all $f \in L^{2}(\gamma)$
$$
\underset{(y,t) \in \Gamma_{x} ^{(A,a)}(\gamma)}{\sup} |e^{t^{2}L}f(y)| \leq C \underset{r>0}{\sup} \frac{1}{\gamma(B(x,r))} \int \limits _{B(x,r)} |f(z)|d\gamma(z).
$$
\end{lemma}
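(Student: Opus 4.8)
The strategy is to realise $e^{t^{2}L}$ as an integral operator against $d\gamma$ and to dominate its kernel, on the admissible cone, by a radially decreasing approximate identity of scale $t$ centred at $x$. Writing
$$e^{t^{2}L}f(y)=\int_{\R^{n}}K_{t}(y,z)\,f(z)\,d\gamma(z),\qquad K_{t}(y,z):=\pi^{n/2}e^{|z|^{2}}M_{t^{2}}(y,z)$$
(the integral converging pointwise for $f\in L^{2}(\gamma)$ because $(1-e^{-2t^{2}})^{-1}>1$), one completes the square in the exponent of the Mehler kernel to obtain the identity
$$K_{t}(y,z)=e^{|y|^{2}}\,(1-e^{-2t^{2}})^{-n/2}\exp\!\Big(-\frac{e^{-2t^{2}}\,|z-e^{t^{2}}y|^{2}}{1-e^{-2t^{2}}}\Big).$$

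Next I would use the constraints defining $\Gamma^{(A,a)}_{x}(\gamma)$, namely $|y-x|<At$ and $t\le am(x)$, together with $t\max\{|x|,1\}=t/m(x)\le a$ and Lemma~\ref{lem:mnp1}, to show that on the cone this kernel behaves like a heat kernel of scale $t$ centred at $x$. Since $t^{2}\le a^{2}$, both $1-e^{-2t^{2}}\asymp t^{2}$ and $e^{-2t^{2}}\asymp1$ with constants depending only on $a$; since $(e^{t^{2}}-1)|y|\le e^{a^{2}}t^{2}|y|\le C(a,A)\,t$ (using $|y|\le|x|+Aa$ and $t\max\{|x|,1\}\le a$), the centre $e^{t^{2}}y$ lies within $C(a,A)t$ of $x$; and since $|y|^{2}-|x|^{2}\le|y-x|\,(|y|+|x|)\le 2At|x|+A^{2}t^{2}\le C(a,A)$, one has $e^{|y|^{2}}\le C(a,A)\,e^{|x|^{2}}$. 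Absorbing the shift $e^{t^{2}}y\mapsto x$ into the Gaussian then gives, for all $(y,t)\in\Gamma^{(A,a)}_{x}(\gamma)$ and all $z\in\R^{n}$,
$$K_{t}(y,z)\le C(a,A)\,e^{|x|^{2}}\,t^{-n}\exp\!\Big(-c(a,A)\,\frac{|z-x|^{2}}{t^{2}}\Big).$$

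Finally, splitting $\R^{n}$ into $B(x,t)$ and the annuli $B(x,2^{k}t)\setminus B(x,2^{k-1}t)$ for $k\ge1$, the exponential is at most $e^{-c\,4^{k-1}}$ on the $k$-th annulus, so
$$\int_{\R^{n}}K_{t}(y,z)\,|f(z)|\,d\gamma(z)\ \le\ C\,\Phi(x)\sum_{k\ge0}e^{-c\,4^{k-1}}\,t^{-n}e^{|x|^{2}}\gamma\big(B(x,2^{k}t)\big),$$
where $\Phi(x):=\sup_{r>0}\gamma(B(x,r))^{-1}\int_{B(x,r)}|f|\,d\gamma$ is the right-hand side of the lemma and $C,c>0$ depend only on $n,a,A$. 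The one real obstacle is this last series: because $2^{k}t$ may be as large as a multiple of $m(x)$, the measure $\gamma(B(x,2^{k}t))$ is \emph{not} comparable to $e^{-|x|^{2}}(2^{k}t)^{n}$, and the non-doubling nature of $\gamma$ forces $t^{-n}e^{|x|^{2}}\gamma(B(x,2^{k}t))\le C_{n}\,2^{kn}e^{2^{k+1}a}$ (using $|w|^{2}\ge|x|^{2}-2\cdot2^{k}t|x|$ on $B(x,2^{k}t)$ and $t|x|\le a$). This single-exponential growth in $2^{k}$ is exactly what is absorbed by the double-exponential off-diagonal decay $e^{-c\,4^{k}}$ of the Mehler kernel, so the series sums to a constant depending only on $n,a,A$; taking the supremum over $(y,t)\in\Gamma^{(A,a)}_{x}(\gamma)$ concludes the proof. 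The only delicate bookkeeping is to check that the constant $c(a,A)$ produced by the completion of the square and by the shift absorption stays strictly positive.
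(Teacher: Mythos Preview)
Your argument is correct. The paper does not give its own proof of this lemma; it simply cites Pineda--Urbina \cite[Lemma~1.1]{pu} (for $(A,a)=(1,\tfrac12)$) and remarks that the argument carries over to general aperture and admissibility parameters. Your write-up is a self-contained version of that standard route: one rewrites the Mehler kernel against $d\gamma$, completes the square to see it as a Gaussian of scale $\sim t$ centred at $e^{t^{2}}y$, and then uses the cone constraints $t|x|\le a$, $|y-x|<At$ to replace the centre by $x$ and the prefactor $e^{|y|^{2}}$ by $e^{|x|^{2}}$, up to constants depending only on $a,A$. The identity you wrote for $K_{t}(y,z)$ is correct, and the shift absorption via $|z-e^{t^{2}}y|^{2}\ge\tfrac12|z-x|^{2}-C(a,A)t^{2}$ is the right way to keep $c(a,A)>0$.

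The point you single out as the ``one real obstacle'' is exactly the content of the lemma in the non-doubling setting, and your handling of it is the right one: on the annulus $B(x,2^{k}t)\setminus B(x,2^{k-1}t)$ one only has $t^{-n}e^{|x|^{2}}\gamma(B(x,2^{k}t))\lesssim 2^{kn}e^{2^{k+1}a}$, which grows single-exponentially in $2^{k}$, while the kernel contributes the double-exponential factor $e^{-c\,4^{k}}$; hence the series converges with a bound depending only on $n,a,A$. This interplay between single-exponential measure growth and double-exponential kernel decay is precisely the mechanism that the paper exploits repeatedly elsewhere (Lemma~\ref{lem:mm} versus Lemma~\ref{lem:od}), so your proof is entirely in the spirit of the paper even though the paper outsources this particular lemma.
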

Using \cite[Theorem 2.7]{htv}, we get the $L^{2}$ boundedness of non-tangential maximal functions, and the $L^{1}$ boundedness of their global parts.
\begin{proposition}
\label{prop:glob}
Let $A,a>$ and set $\tau := \frac{(1+aA)(1+2aA)}{2}$. Then, for $f \in C_{c}^{\infty}(\R^{n})$,
\begin{enumerate}[(i)]
\item
$$
\|T^{*}_{glob,a,A}f:x \mapsto \underset{(y,t) \in \Gamma_{x} ^{(A,a)}(\gamma)}{\sup} \int \limits _{\R^{n}} M_{t^{2}}(y,z)1_{N_{\tau}^{c}}(y,z)|f(z)|dz\|_{1} \lesssim  \|f\|_{1}.
$$
\item
$$
\|x \mapsto \underset{(y,t) \in \Gamma_{x} ^{(A,a)}(\gamma)}{\sup} \int \limits _{\R^{n}} M_{t^{2}}(y,z)|f(z)|dz\|_{2} \lesssim  \|f\|_{2}.
$$
\end{enumerate}
\end{proposition}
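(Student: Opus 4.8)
The plan is to dominate both quantities pointwise by (a global variant of) the Gaussian Hardy--Littlewood maximal operator
$$
M_{\gamma}f(x):=\sup_{r>0}\frac{1}{\gamma(B(x,r))}\int_{B(x,r)}|f(z)|\,d\gamma(z),
$$
and then to quote \cite[Theorem 2.7]{htv}. For part (ii) this is immediate: since the Mehler kernel is nonnegative, Lemma \ref{lem:pu} applied to $|f|$ gives, uniformly in $(y,t)\in\Gamma^{(A,a)}_{x}(\gamma)$,
$$
\int_{\R^{n}}M_{t^{2}}(y,z)|f(z)|\,dz=\bigl|e^{t^{2}L}(|f|)(y)\bigr|\le C\,M_{\gamma}f(x),
$$
so it suffices to know that $M_{\gamma}$ is bounded on $L^{2}(\gamma)$. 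I would get this by writing $M_{\gamma}f\le M_{\gamma,loc}f+M_{\gamma,glob}f$, obtained by inserting $1=1_{N_{1/2}}(x,z)+1_{N_{1/2}^{c}}(x,z)$ under the averages; by \cite[Theorem 2.7]{htv} the local part $M_{\gamma,loc}$ is of weak type $(1,1)$ for $\gamma$ and the global part $M_{\gamma,glob}$ is bounded on $L^{1}(\gamma)$, and both are trivially bounded on $L^{\infty}(\gamma)$, so Marcinkiewicz interpolation yields the $L^{2}(\gamma)$ bound.

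For part (i) the new ingredient is a geometric observation that accounts for the precise value of $\tau$. Fix $x$ and $(y,t)\in\Gamma^{(A,a)}_{x}(\gamma)$, so that $|x-y|<At\le aA\,m(x)$; by Lemma \ref{lem:mnp1} (with parameter $aA$) this forces $m(x)\le(1+aA)m(y)$. Hence, for any $z$ with $(y,z)\in N_{\tau}^{c}$, i.e.\ $|y-z|>\tau\,m(y)$, the triangle inequality gives
$$
|x-z|\ge|y-z|-|x-y|>\tau\,m(y)-aA\,m(x)\ge\frac{\tau}{1+aA}\,m(x)-aA\,m(x)=\Bigl(\frac{1+2aA}{2}-aA\Bigr)m(x)=\tfrac12\,m(x),
$$
so $(x,z)\in N_{1/2}^{c}$. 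Thus $1_{N_{\tau}^{c}}(y,z)\le 1_{N_{1/2}^{c}}(x,z)$ for every admissible $(y,t)$, and consequently, writing $g_{x}:=1_{N_{1/2}^{c}}(x,\cdot)\,|f|$,
$$
\int_{\R^{n}}M_{t^{2}}(y,z)1_{N_{\tau}^{c}}(y,z)|f(z)|\,dz\le\int_{\R^{n}}M_{t^{2}}(y,z)\,g_{x}(z)\,dz=e^{t^{2}L}g_{x}(y).
$$

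Since $g_{x}$ is bounded with compact support, hence in $L^{2}(\gamma)$, I would then apply Lemma \ref{lem:pu} to $g_{x}$ and take the supremum over $(y,t)\in\Gamma^{(A,a)}_{x}(\gamma)$, obtaining
$$
T^{*}_{glob,a,A}f(x)\le C\sup_{r>0}\frac{1}{\gamma(B(x,r))}\int_{B(x,r)}1_{N_{1/2}^{c}}(x,z)|f(z)|\,d\gamma(z)=C\,M_{\gamma,glob}f(x).
$$
Integrating in $x$ and invoking the $L^{1}(\gamma)$-boundedness of $M_{\gamma,glob}$ from \cite[Theorem 2.7]{htv} gives $\|T^{*}_{glob,a,A}f\|_{1}\lesssim\|M_{\gamma,glob}f\|_{1}\lesssim\|f\|_{1}$, which is (i).

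The only delicate point is the bookkeeping in the geometric step: the admissibility condition is available as $|x-y|<aA\,m(x)$, a bound in terms of $m(x)$ rather than $m(y)$, so one must use Lemma \ref{lem:mnp1} precisely in the direction $m(y)\ge m(x)/(1+aA)$. The value $\tau=\frac{(1+aA)(1+2aA)}{2}$ is exactly what makes the leftover constant equal $\tfrac12$; any fixed positive constant would do, provided it is matched to the local region used in the maximal theorem of \cite{htv}. Everything else is a direct combination of Lemmas \ref{lem:mnp1} and \ref{lem:pu} with the Harboure--Torrea--Viviani estimate, so I do not expect a substantive obstacle beyond this comparison of local regions seen from $y$ versus from $x$.
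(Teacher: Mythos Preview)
Your proof is correct and follows essentially the same route as the paper's: the geometric computation for (i) showing $(y,z)\in N_\tau^c \Rightarrow (x,z)\in N_{1/2}^c$ via Lemma~\ref{lem:mnp1}, the domination by $g_x=1_{N_{1/2}^c}(x,\cdot)|f|$ and the appeal to Lemma~\ref{lem:pu} and \cite[Theorem~2.7]{htv} are exactly what the paper does. The only cosmetic difference is in (ii): you split the maximal operator $M_\gamma$ directly into local and global pieces and quote \cite{htv} for both, whereas the paper first splits the Mehler integral via $1_{N_\tau}(y,z)$, bounds the local piece by a \emph{truncated} local maximal function (citing \cite[Lemma~3.2]{mnp} for its weak $(1,1)$ bound), and then uses the already-proven part (i) for the global piece --- your variant is slightly more streamlined but uses the same ingredients.
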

Here,  $\|x \mapsto \underset{(y,t) \in \Gamma_{x} ^{(A,a)}(\gamma)}{\sup} \int \limits _{\R^{n}} M_{t^{2}}(y,z)1_{N_{\tau}^{c}}(y,z)|f(z)|dz\|_{1} \lesssim  \|f\|_{1}$ means  
$$\|x \mapsto \underset{(y,t) \in \Gamma_{x} ^{(A,a)}(\gamma)}{\sup} \int \limits _{\R^{n}} M_{t^{2}}(y,z)1_{N_{\tau}^{c}}(y,z)|f(z)|dz\|_{1} \leq C \|f\|_{1}$$ for some $C>0$ independent of $f$. We will use this notation throughout the paper.
\begin{proof}
For $x\in \R^{n}$, $(y,z) \in N_{\tau} ^{c}$, and $(y,t)  \in \Gamma_{x} ^{(A,a)}(\gamma)$, we have that
$$
|x-z| \geq \tau m(y) -aAm(x) \geq (\frac{\tau}{1+aA}-aA)m(x)=\frac{1}{2}m(x). 
$$
Therefore
$$
\|x \mapsto \underset{(y,t) \in \Gamma_{x} ^{(A,a)}(\gamma)}{\sup} \int \limits _{\R^{n}} M_{t^{2}}(y,z)1_{N_{\tau}^{c}}(y,z)|f(z)|dz\|_{1} \leq
\|x \mapsto \underset{(y,t) \in \Gamma_{x} ^{(A,a)}(\gamma)}{\sup} \int \limits _{\R^{n}} M_{t^{2}}(y,z)g_{x}(z)dz\|_{1},
$$
where $g_{x}(z):=1_{N_{\frac{1}{2}}^{c}}(x,z)|f(z)|$.
Lemma \ref{lem:pu}, combined with \cite[Theorem 2.7]{htv} thus gives
$$
\|x \mapsto \underset{(y,t) \in \Gamma_{x} ^{(A,a)}(\gamma)}{\sup} \int \limits _{\R^{n}} M_{t^{2}}(y,z)1_{N_{\tau}^{c}}(y,z)|f(z)|dz\|_{1} \lesssim 
\int \limits _{\R^{n}} \underset{r>0}{\sup} \frac{1}{\gamma(B(x,r))} \int \limits _{B(x,r)}1_{N_{\frac{1}{2}}^{c}}(x,z) |f(z)|d\gamma(z) \lesssim \|f\|_{1}.
$$
To prove (ii), we apply Lemma \ref{lem:pu} and Lemma \ref{lem:mnp1} to obtain, for $x\in \R^{n}$,
$$
 \underset{(y,t) \in \Gamma_{x} ^{(A,a)}(\gamma)}{\sup} \int \limits _{\R^{n}} 1_{N_{\tau}}(y,z)M_{t^{2}}(y,z)|f(z)|dz \lesssim
 \underset{r \in (0,\tau' m(x))}{\sup} \frac{1}{\gamma(B(x,r))} \int \limits _{B(x,r)} |f(z)| d\gamma(z),
$$
for $\tau' = aA+\tau(2+2aA)$ and an implicit constant independent of $x$.
The weak type $1-1$ of this local part is proven, for instance, in \cite[Lemma 3.2]{mnp}. Combined with (i), this gives the weak type $1-1$ of
$$x \mapsto \underset{(y,t) \in \Gamma_{x} ^{(A,a)}(\gamma)}{\sup} \int \limits _{\R^{n}} M_{t^{2}}(y,z)|f(z)|dz.$$
Given the (obvious) $L^{\infty}$ boundedness of the Hardy-Littlewood maximal function
(and thus of the non-tangential maximal function by Lemma \ref{lem:pu}), the proof follows by interpolation.
\end{proof}

A geometric version of the local/global dichotomy is given by the key notion of admissible balls, introduced in \cite{mm}.
Defining
$$
\mathcal{B}_{a} := \{B(x,r) \;;\; x \in \R^{n}, \quad 0<r \leq am(x)\},
$$
we say that a ball $B \in \mathcal{B}_{a}$ is admissible at scale $a$.
The gaussian measure acts as a doubling measure on admissible balls, as Mauceri and Meda have pointed out in \cite[Proposition 2.1]{mm}.
We recall here a version of their result.
\begin{lemma}
\label{lem:mm}
There exists $C>0$ such that for all $a,b\geq 1$ and all $B(x,r) \in \mathcal{B}_{a}$ we have
$$
\gamma(B(x,br)) \leq e^{2a^{2}(2b+1)^{2}}\gamma(B(x,r)).
$$
\end{lemma}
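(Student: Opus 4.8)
The plan is to use that an admissible ball $B(x,r)\in\mathcal B_{a}$ is small in \emph{two} senses simultaneously --- its radius and its distance to the origin are both controlled by $a$ --- so that the Gaussian density $e^{-|\cdot|^{2}}$ is essentially constant on it, and the statement reduces to the elementary Lebesgue volume ratio of two concentric balls. The only consequences of $B(x,r)\in\mathcal B_{a}$ that I would use are $r\le am(x)\le a$ and $|x|\,r\le a|x|m(x)=a\min\{|x|,1\}\le a$; everything else is bookkeeping with $a,b\ge1$.

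\emph{Step 1 (pointwise density comparison).} I would first prove that for all $y\in B(x,br)$ and all $z\in B(x,r)$,
$$
|z|^{2}-|y|^{2}\ \le\ a^{2}(b+1)^{2}.
$$
Indeed $|z|\le|x|+r$ and $|y|\ge\max\{0,|x|-br\}$. If $|x|\ge br$, then
$$
|z|^{2}-|y|^{2}\le(|x|+r)^{2}-(|x|-br)^{2}=r(b+1)\bigl(2|x|-r(b-1)\bigr)\le 2|x|\,r(b+1)\le 2a(b+1)\le a^{2}(b+1)^{2},
$$
the last step because $a(b+1)\ge2$. If $|x|<br$, then $|y|^{2}\ge0$ and $|z|\le|x|+r<r(b+1)\le a(b+1)$, so again $|z|^{2}-|y|^{2}\le a^{2}(b+1)^{2}$. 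Consequently $\sup_{y\in B(x,br)}e^{-|y|^{2}}\le e^{a^{2}(b+1)^{2}}\inf_{z\in B(x,r)}e^{-|z|^{2}}$.

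\emph{Step 2 (assembling).} Since $|B(x,br)|=b^{n}|B(x,r)|$ for Lebesgue measure, Step 1 gives
$$
\gamma(B(x,br))\le\pi^{-n/2}|B(x,br)|\sup_{y\in B(x,br)}e^{-|y|^{2}}\le b^{n}e^{a^{2}(b+1)^{2}}\pi^{-n/2}|B(x,r)|\inf_{z\in B(x,r)}e^{-|z|^{2}}\le b^{n}e^{a^{2}(b+1)^{2}}\gamma(B(x,r)).
$$
It remains to absorb the factor $b^{n}$: with $C_{n}:=\sup_{t\ge1}t^{n}e^{-t^{2}}<\infty$ and $ab\ge1$ one has $b^{n}\le C_{n}e^{a^{2}b^{2}}$, and since $b^{2}+(b+1)^{2}=2b^{2}+2b+1\le2(2b+1)^{2}$,
$$
\gamma(B(x,br))\le C_{n}e^{a^{2}b^{2}+a^{2}(b+1)^{2}}\gamma(B(x,r))\le C_{n}e^{2a^{2}(2b+1)^{2}}\gamma(B(x,r)),
$$
which is the claim with $C=C_{n}$.

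I do not expect a genuine obstacle: the argument is elementary throughout. The two points needing a little attention are the case split $|x|\ge br$ versus $|x|<br$ in Step 1 --- the two cases invoke the two \emph{different} admissibility bounds $|x|r\le a$ and $r\le a$ --- and checking that these crude constants fit under the deliberately generous target exponent $2a^{2}(2b+1)^{2}$. The dimensional constant $C$ (matching the ``there exists $C>0$'' in the statement) is simply the cost of replacing the Gaussian measures of the two balls by their Lebesgue volumes.
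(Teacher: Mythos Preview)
Your proof is correct. The two-case split in Step~1 is the right way to exploit both consequences $r\le a$ and $|x|r\le a$ of admissibility, and the bookkeeping in Step~2 (absorbing $b^{n}$ into the exponential via $b^{n}e^{-a^{2}b^{2}}\le C_{n}$ and then $b^{2}+(b+1)^{2}\le 2(2b+1)^{2}$) is sound.

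Note that the paper does not actually prove this lemma: it is stated as a version of \cite[Proposition~2.1]{mm} and simply invoked. Your argument is essentially the standard one behind that reference --- freeze the Gaussian density on an admissible ball using $|x|r\le a$, reduce to Lebesgue volume, and clean up constants. So there is no meaningful divergence in approach to comment on; you have supplied the elementary details that the paper omits by citation. One small remark: the lemma as printed has a dangling ``there exists $C>0$'' with no $C$ in the displayed inequality; your reading (that a dimensional constant is intended) is the sensible one, and in the paper's applications any such constant is harmlessly absorbed into the $\lesssim$ notation.
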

This led Jan Maas, Jan van Neerven and the author to introduce gaussian tent spaces, in \cite{mnp}, as follows.
Let $D:= \{(t,x)\in (0,\infty) \times \R^{n} \;;\; t<m(x)\}$. Then $t^{1,2}(\gamma)$ is the completion of $C_{c}(D)$ with respect to the norm
$$
\|F\|_{t^{1,2}(\gamma)}:= \int \limits _{\R^{n}}  \Big(\int_{\Gamma_x(\gamma)} \frac1{\gamma(B(y,t))}|F(t,y)|^{2}\,d\gamma(y)\,\frac{dt}{t}\Big)^\frac12 d\gamma(x).
$$
Compared to \cite{mnp}, we are using here the notation $t^{1,2}(\gamma)$ rather than $T^{1,2}(\gamma)$ to emphasise the local nature of this space.
Theorem 3.4 in \cite{mnp} gives an atomic decomposition of $t^{1,2}(\gamma)$. Given $a>0$, a function $F:D\to\C$ is called a $t^{1,2}(\gamma)$ a-atom if
there exists a ball $B \in \mathcal{B}_{a}$ such that $supp(F) \subset \{(t,y)\in (0,\infty) \times \R^{n} \;;\; t\leq \min(d(y,B^{c}),m(y))\}$ and
$$
\int \limits _{\R^{n}} \int \limits _{0} ^{\infty} |F(t,y)|^{2} \frac{dydt}{t} \leq \gamma(B)^{-1}.
$$
\begin{theorem}
\label{thm:atomic}
For all $f \in t^{1,2}(\gamma)$ and $a>1$, there exists a sequence 
$(\lambda_{n})_{n\geq 1}
\in \ell_{1}$ and a sequence of $t^{1,2}(\gamma)$ $a$-atoms $(F_{n})_{n\geq 1}$
such that
\begin{enumerate}
\item[\rm(i)] $f = \sum  _{n\geq 1}\lambda_{n}F_{n}$;
\item[\rm(ii)]
$\sum  _{n\geq 1}|\lambda_{n}| \lesssim
\|f\|_{t^{1,2}(\gamma)}$.
\end{enumerate}
\end{theorem}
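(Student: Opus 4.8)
The plan is to run the Coifman--Meyer--Stein tent space atomic decomposition, adapted to the local, gaussian situation by working systematically at admissible scales, where Lemma~\ref{lem:mm} restores the doubling property and Lemma~\ref{lem:mnp1} keeps the density $e^{-|\cdot|^{2}}$ under control. By a standard limiting argument it suffices to decompose $F\in C_{c}(D)$. Let $\mathcal{A}F(x):=\bigl(\int_{\Gamma^{(A,a_{0})}_{x}(\gamma)}\gamma(B(y,t))^{-1}|F(t,y)|^{2}\,d\gamma(y)\,\frac{dt}{t}\bigr)^{1/2}$ be the conical square function over a cone whose aperture $A$ and admissibility parameter $a_{0}$ are fixed once and for all, large enough to absorb the fixed, finitely many dilations occurring below; by the equivalence of the norms defining $t^{1,2}(\gamma)$ for cones of different aperture and admissibility parameter (cf.\ \cite[Theorem 3.8]{mnp}) one still has $\|\mathcal{A}F\|_{L^{1}(\gamma)}\simeq\|F\|_{t^{1,2}(\gamma)}$, and for $F\in C_{c}(D)$ this $\mathcal{A}F$ is bounded with support in a fixed compact set. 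For $k\in\Z$ set $O_{k}:=\{\mathcal{A}F>2^{k}\}$, an open set with $\gamma(O_{k})\le 2^{-k}\|\mathcal{A}F\|_{L^{1}(\gamma)}$, empty for $k$ large and with $\overline{\bigcup_{k}O_{k}}$ a compact proper subset of $\R^{n}$; enlarge it to $O_{k}^{*}:=\{x\colon\mathcal{M}1_{O_{k}}(x)>\tfrac12\}$, where $\mathcal{M}$ is the uncentred Hardy--Littlewood maximal operator restricted to radii $r\le m(\cdot)$, so that $O_{k}\subset O_{k}^{*}$ while $\gamma(O_{k}^{*})\lesssim\gamma(O_{k})$ by the weak type $(1,1)$ of $\mathcal{M}$ (\cite[Lemma 3.2]{mnp}, cf.\ \cite[Theorem 2.7]{htv}).

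\textbf{The atoms.} Next I would apply a Whitney covering adapted to $m$ (as constructed in \cite{mnp}) to write $O_{k}^{*}=\bigcup_{i}B_{k,i}$, $B_{k,i}=B(c_{k,i},r_{k,i})$, with $r_{k,i}\simeq\min\{d(c_{k,i},(O_{k}^{*})^{c}),m(c_{k,i})\}$, with the $B_{k,i}$ of bounded overlap, and with a fixed dilate $cB_{k,i}$ still admissible at the prescribed scale $a$ (run the covering at scale $a/c$, and note an $(a/c)$-admissible ball is $a$-admissible). Writing $\widehat{T}(O):=\{(t,y)\in D\colon B(y,t)\subset O\}$, a routine argument, using that $\mathcal{A}F<\infty$ everywhere and $\bigcup_{k}O_{k}^{*}\supset\{\mathcal{A}F>0\}$, shows $F$ vanishes almost everywhere on $D\setminus\bigcup_{k}\widehat{T}(O_{k}^{*})$. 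I then partition each layer $\widehat{T}(O_{k}^{*})\setminus\widehat{T}(O_{k+1}^{*})$ into pieces $E_{k,i}$ subordinate to the $B_{k,i}$; on $E_{k,i}$ one has $y\in B_{k,i}$, and since $B(y,t)\subset O_{k}^{*}$ forces $t\le d(y,(O_{k}^{*})^{c})\lesssim r_{k,i}$, also $B(y,t)\subset cB_{k,i}$. Setting $\lambda_{k,i}:=\bigl(\gamma(cB_{k,i})\int_{E_{k,i}}|F|^{2}\,d\gamma(y)\,\frac{dt}{t}\bigr)^{1/2}$ and $A_{k,i}:=\lambda_{k,i}^{-1}F1_{E_{k,i}}$, the support property together with this normalisation makes each $A_{k,i}$ a $t^{1,2}(\gamma)$ $a$-atom (associated with $cB_{k,i}$), and $F=\sum_{k,i}\lambda_{k,i}A_{k,i}$, which is (i).

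\textbf{Summing the coefficients and the key estimate.} For (ii), Cauchy--Schwarz in $i$, together with bounded overlap, $B_{k,i}\subset O_{k}^{*}$ and Lemma~\ref{lem:mm} (for the dilation from $B_{k,i}$ to $cB_{k,i}$), gives
$$
\sum_{i}\lambda_{k,i}\le\Bigl(\sum_{i}\gamma(cB_{k,i})\Bigr)^{1/2}\Bigl(\sum_{i}\int_{E_{k,i}}|F|^{2}\,d\gamma(y)\,\tfrac{dt}{t}\Bigr)^{1/2}\lesssim\gamma(O_{k})^{1/2}\Bigl(\int_{\widehat{T}(O_{k}^{*})\setminus\widehat{T}(O_{k+1}^{*})}|F|^{2}\,d\gamma(y)\,\tfrac{dt}{t}\Bigr)^{1/2},
$$
so everything reduces to the quadratic estimate
$$
\int_{\widehat{T}(O_{k}^{*})\setminus\widehat{T}(O_{k+1}^{*})}|F(t,y)|^{2}\,d\gamma(y)\,\frac{dt}{t}\ \lesssim\ 2^{2k}\,\gamma(O_{k});
$$
granting this, $\sum_{i}\lambda_{k,i}\lesssim 2^{k}\gamma(O_{k})$ and hence $\sum_{k,i}\lambda_{k,i}\lesssim\sum_{k}2^{k}\gamma(O_{k})\simeq\|\mathcal{A}F\|_{L^{1}(\gamma)}\simeq\|F\|_{t^{1,2}(\gamma)}$. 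This quadratic estimate is the crux and the step I expect to be the main obstacle, precisely because $\gamma$ is not doubling at large scales, so the computation has to stay at admissible scales (where Lemma~\ref{lem:mm} supplies doubling and, with Lemma~\ref{lem:mnp1}, makes $e^{-|\cdot|^{2}}$ comparable to its value at the centre of the ambient Whitney ball). The mechanism is the usual one: if $(t,y)\notin\widehat{T}(O_{k+1}^{*})$, choose $x_{0}\in(O_{k+1}^{*})^{c}$ with $|x_{0}-y|<t$; then $B(y,t)\subset B(x_{0},2t)\subset B(y,3t)$, the radius $2t$ is admissible at $x_{0}$ (as $t<m(y)\simeq m(x_{0})$ by Lemma~\ref{lem:mnp1}), and the definition of $O_{k+1}^{*}$ together with Lemma~\ref{lem:mm} yields $\gamma(O_{k+1}^{c}\cap B(y,3t))\gtrsim\gamma(B(x_{0},2t))\ge\gamma(B(y,t))$, so $1\lesssim\gamma(B(y,t))^{-1}\gamma(O_{k+1}^{c}\cap B(y,3t))$ on the layer. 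Inserting this and applying Fubini --- noting that for $(t,y)\in E_{k,i}$ the variable $x$ with $|x-y|<3t$ lies automatically in a fixed dilate $C'B_{k,i}$, and that the cone $\{(t,y)\colon|x-y|<3t,\ t\lesssim m(x)\}$ is contained in $\Gamma^{(A,a_{0})}_{x}(\gamma)$ by the choice of $A,a_{0}$ --- and finally using $\mathcal{A}F\le 2^{k+1}$ on $O_{k+1}^{c}$, one obtains
$$
\int_{\widehat{T}(O_{k}^{*})\setminus\widehat{T}(O_{k+1}^{*})}|F|^{2}\,d\gamma(y)\,\frac{dt}{t}\ \lesssim\ \int_{O_{k+1}^{c}\cap\widetilde{O}_{k}}\mathcal{A}F(x)^{2}\,d\gamma(x)\ \le\ 2^{2(k+1)}\,\gamma(\widetilde{O}_{k})\ \lesssim\ 2^{2k}\,\gamma(O_{k}),
$$
where $\widetilde{O}_{k}:=\bigcup_{i}C'B_{k,i}$ satisfies $\gamma(\widetilde{O}_{k})\lesssim\gamma(O_{k})$ by bounded overlap, Lemma~\ref{lem:mm} and the weak type estimate for $\mathcal{M}$. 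Apart from this estimate, the remaining work --- keeping track of admissibility constants so that Lemma~\ref{lem:mm} applies at every step --- is routine.
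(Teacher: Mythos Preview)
The paper does not actually prove this theorem: it is quoted verbatim as Theorem~3.4 of \cite{mnp} and used as a black box, so there is no ``paper's own proof'' to compare against beyond that citation. Your write-up is therefore not redundant with the present paper but rather reconstructs the argument of \cite{mnp}.

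As to the content: your sketch is the correct one and is precisely the Coifman--Meyer--Stein scheme that \cite{mnp} carries out. The main ingredients you identify --- the gaussian Whitney covering of the enlarged level sets $O_{k}^{*}$, the local doubling from Lemma~\ref{lem:mm}, the comparability of $m$ along admissible balls from Lemma~\ref{lem:mnp1}, the weak $(1,1)$ of the admissibly truncated maximal operator, and the change of aperture \cite[Theorem~3.8]{mnp} --- are exactly the tools that paper develops for this purpose, and your crucial ``quadratic estimate'' is the gaussian Carleson-type inequality that drives their proof. The only places worth tightening are bookkeeping: make explicit that the Whitney balls can be taken in $\mathcal{B}_{a/c}$ so that $cB_{k,i}\in\mathcal{B}_{a}$, and check that for $(t,y)\in E_{k,i}$ the condition $t<m(y)$ together with $y\in B_{k,i}$ genuinely forces $t\le d(y,(cB_{k,i})^{c})$ (this uses $r_{k,i}\simeq\min\{d(c_{k,i},(O_{k}^{*})^{c}),m(c_{k,i})\}$ and $B(y,t)\subset O_{k}^{*}$). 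Otherwise your outline matches the cited proof in structure and in substance.
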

To simplify notation we will simply call atoms the $t^{1,2}(\gamma)$ 2-atoms.
Combining the atomic decomposition of $t^{1,2}(\gamma)$ and Lemma \ref{lem:repro} we get the following decomposition, which is the basis of the proof of Theorem \ref{thm:main}.
\begin{corollary}
\label{cor:dec}
For all $N \in \N$, $a>1$, $b>0$, and $\alpha>a^{2}$, there exists $C>0$ and $n$ sequences of atoms  $(F_{m,j})_{m\in \N}$ and complex numbers 
$(\lambda_{m,j})_{m\in\N}$ for $j=1,...n$, such that for all 
$u \in C_{c}^{\infty}(\R^{n})$ and $x\in \R^{n}$:
\begin{equation*}
\begin{split}
u(x) = \int \limits _{\R^{n}} u d\gamma
&- C \sum \limits _{j=1} ^{n} \sum \limits _{m=1} ^{\infty} \lambda_{m,j} \int \limits _{0} ^{2} (t^{2}L)^{N}e^{\frac{t^{2}}{\alpha}L}t\partial_{x_{j}}^{*}F_{m,j}(t,x)\frac{dt}{t}\\
&+C \sum \limits _{j=1} ^{n}\sum \limits _{m=1} ^{\infty} \lambda_{m,j} \int \limits _{0}^{2} 1_{[\frac{m(x)}{b},2]}(t) (t^{2}L)^{N}e^{\frac{t^{2}}{\alpha}L}t\partial_{x_{j}}^{*}F_{m,j}(t,x)\frac{dt}{t}  
\\&-C \sum \limits _{j=1} ^{n} \int \limits _{0} ^{\frac{m(x)}{b}} (t^{2}L)^{N}e^{\frac{t^{2}}{\alpha}L}t\partial_{x_{j}}^{*}(1_{D^{c}}(t,.)t\partial_{x_{j}}e^{\frac{a^{2}t^{2}}{\alpha}L})u(x)\frac{dt}{t}- C \int \limits _{\frac{m(x)}{b}} ^{\infty} (t^{2}L)^{N+1}e^{\frac{(1+a^{2})t^{2}}{\alpha}L}u(x) \frac{dt}{t},
\end{split}
\end{equation*}
and $ \sum \limits _{j=1} ^{n} \sum \limits _{m=1} ^{\infty} |\lambda_{m,j}| \lesssim \|u\|_{h^{1}_{\text{quad},a}}$. 
\end{corollary}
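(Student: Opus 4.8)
The idea is to start from the Calderón reproducing formula of Lemma \ref{lem:repro}, applied with the given $N$, $a$, $\alpha$, and then to peel off the derivative and semigroup factors so as to expose a tent-space function to which Theorem \ref{thm:atomic} can be applied. First I would write $L = \frac12\sum_{j=1}^{n}\partial_{x_j}^{*}\partial_{x_j}$ (up to the correct normalisation; $\partial_{x_j}^{*}$ denotes the formal adjoint in $L^2(\gamma)$), so that one power of $t^2L$ in the integrand becomes $\frac12\sum_j (t\partial_{x_j}^{*})(t\partial_{x_j})$. Splitting the remaining semigroup factor as $e^{\frac{(1+a^{2})t^{2}}{\alpha}L} = e^{\frac{a^2t^2}{\alpha}L}e^{\frac{t^2}{\alpha}L}$ and pushing $e^{\frac{a^2t^2}{\alpha}L}$ through to act on $u$, the integrand of Lemma \ref{lem:repro} takes the shape $\sum_j (t^2L)^{N} e^{\frac{t^2}{\alpha}L}\,t\partial_{x_j}^{*}\,G_j(t,\cdot)$ with $G_j(t,x):=c\,t\partial_{x_j}e^{\frac{a^2t^2}{\alpha}L}u(x)$. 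The point of the factor $e^{\frac{t^2}{\alpha}L}$ with $\alpha>a^2$ (equivalently the admissibility of the cone used in $S_a$) is exactly what makes $G_j$, suitably truncated, sit in $t^{1,2}(\gamma)$ with $\|G_j\|_{t^{1,2}(\gamma)}\lesssim \|S_a u\|_1 \le \|u\|_{h^1_{\mathrm{quad},a}}$; this is where one invokes the conical square function $S_a$ appearing in the definition of $h^1_{\mathrm{quad},a}(\gamma)$.

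Next I would break the $t$-integral $\int_0^\infty$ into $\int_0^{2}$ and $\int_{2}^\infty$, and within $\int_0^2$ separate the parts where $(t,x)\in D$ (i.e. $t<m(x)$) from where $(t,x)\in D^c$. On the region $D$ with $t\le m(x)/b$ the function $1_D(t,\cdot)G_j(t,\cdot)$ lies in $t^{1,2}(\gamma)$ (the truncation by $m(x)/b$ versus $m(x)$ only changes constants, by Lemma \ref{lem:mnp1}), so Theorem \ref{thm:atomic} with parameter $a>1$ produces the atoms $(F_{m,j})_m$ and scalars $(\lambda_{m,j})_m$ with $\sum_{m}|\lambda_{m,j}|\lesssim \|G_j\|_{t^{1,2}(\gamma)}\lesssim \|u\|_{h^1_{\mathrm{quad},a}}$, giving the first sum in the statement. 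Writing $1_D G_j = G_j - 1_{D^c}G_j$ on $(0,2)$ then yields two correction terms: the piece involving $1_{D^c}(t,\cdot)\,t\partial_{x_j}e^{\frac{a^2t^2}{\alpha}L}u$ integrated over $0<t<m(x)/b$, which is precisely the fourth term displayed; and, combined with the restriction of the atomic sum to $t\ge m(x)/b$ (using again that each atom $F_{m,j}$ is supported where $t\le m(y)$), the third term $\int_0^2 1_{[m(x)/b,\,2]}(t)\dots$. The range $\int_2^\infty$ is handled by noting that on it $t>2> m(x)/b$ when $b\ge 2$ (or by absorbing the finitely many intermediate $t$ into the already-controlled terms), producing the last term $\int_{m(x)/b}^\infty (t^2L)^{N+1}e^{\frac{(1+a^2)t^2}{\alpha}L}u\,\frac{dt}{t}$; note this remainder is kept intact, not decomposed, since it will be estimated directly in Section 5 via Mehler kernel bounds. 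Throughout, the constant term $\int_{\R^n}u\,d\gamma$ is carried along unchanged from Lemma \ref{lem:repro}.

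The main obstacle is the bookkeeping that identifies $1_D\cdot G_j$ as a genuine element of $t^{1,2}(\gamma)$ with the asserted norm bound: one must check that replacing $m(x)$ by $m(x)/b$ in the truncation, moving the factor $e^{\frac{a^2t^2}{\alpha}L}$ onto $u$, and interpreting $t\partial_{x_j}e^{\frac{a^2t^2}{\alpha}L}u$ as (a rescaling of) the quantity controlled by $S_a$, are all legitimate and cost only harmless constants — this rests on Lemma \ref{lem:mnp1}, Lemma \ref{lem:mm}, and a change of variables $t\mapsto ct$ in the cone, using $\alpha>a^2$ so that the aperture stays admissible. A secondary, more routine point is justifying all the manipulations (Fubini, pushing the adjoint derivative past the truncation, the integral identity of Lemma \ref{lem:repro}) on the dense class $C_c^\infty(\R^n)$, where everything converges in $L^2(\gamma)$; the formula is an identity in $L^2(\gamma)$ and, since all terms are $\gamma$-measurable functions of $x$, also pointwise $\gamma$-a.e.\ after passing to a subsequence. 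Once these are in place, the $\ell^1$ estimate $\sum_{j,m}|\lambda_{m,j}|\lesssim \|u\|_{h^1_{\mathrm{quad},a}}$ is immediate from summing the per-$j$ bounds over the finitely many $j=1,\dots,n$.
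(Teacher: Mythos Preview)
Your proposal is correct and follows essentially the same route as the paper: factor one power of $t^{2}L$ as $-\tfrac12\sum_j t\partial_{x_j}^{*}\,t\partial_{x_j}$, split $e^{\frac{(1+a^{2})t^{2}}{\alpha}L}=e^{\frac{t^{2}}{\alpha}L}e^{\frac{a^{2}t^{2}}{\alpha}L}$, apply Theorem~\ref{thm:atomic} to $1_{D}(t,\cdot)\,t\partial_{x_j}e^{\frac{a^{2}t^{2}}{\alpha}L}u$, and collect the $D^{c}$ and large-$t$ remainders. Two small sharpenings: the primary split of $\int_{0}^{\infty}$ is at $t=m(x)/b$ (the $\int_{0}^{2}$ only appears afterwards because atoms are supported where $t\le 2$), and the tent-space bound $\|1_{D}G_{j}\|_{t^{1,2}(\gamma)}\lesssim \|u\|_{h^{1}_{\text{quad},a}}$ needs, in addition to Lemma~\ref{lem:mm} and the substitution $t\mapsto \sqrt{\alpha}\,s$, the change-of-aperture result \cite[Theorem~3.8]{mnp} rather than merely Lemma~\ref{lem:mnp1}.
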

Here $\partial_{x_{j}}^{*}$ denotes the adjoint of $\partial_{x_{j}}$ in $L^{2}(\gamma)$.
\begin{proof}
We first remark that
$$
(t^{2}L)^{N+1}e^{\frac{(1+a^{2})t^{2}}{\alpha}L} = -\frac{1}{2} \sum \limits _{j=1} ^{n} (t^{2}L)^{N}e^{\frac{t^{2}}{\alpha}L}
t\partial_{x_{j}}^{*}((1_{D}(t,.)+1_{D^{c}}(t,.))t\partial_{x_{j}}e^{\frac{a^{2}t^{2}}{\alpha}L}u).
$$
It remains to check that the terms $1_{D}(t,.)t\partial_{x_{j}}e^{\frac{a^{2}t^{2}}{\alpha}L}u$, for $j\in \{1,...,n\}$, belong to $t^{1,2}(\gamma)$.
Using Lemma \ref{lem:mm} we have
\begin{equation*}
\begin{split}
\|(t,x) \mapsto 1_{D}(t,x)t\partial_{x_{j}}e^{\frac{a^{2}t^{2}}{\alpha}L}u(x)\|_{t^{1,2}(\gamma)}
&\lesssim \int \limits_{\R^{n}} ( \int \limits _{0} ^{\frac{m(x)}{\sqrt{\alpha}}} \int \limits _{B(x,\sqrt{\alpha}s)} 
\frac{1_{D}(\sqrt{\alpha}s,y)}{\gamma(B(y,\sqrt{\alpha}s))} |s\nabla e^{a^{2}s^{2}L}u(y)|^{2} d\gamma(y)\frac{ds}{s})^{\frac{1}{2}}d\gamma(x)\\
&\lesssim \int \limits_{\R^{n}} ( \int \limits _{0} ^{m(x)} \int \limits _{B(x,\sqrt{\alpha}s)} 
\frac{1_{D}(as,y)}{\gamma(B(y,s))} |s\nabla e^{a^{2}s^{2}L}u(y)|^{2} d\gamma(y)\frac{ds}{s})^{\frac{1}{2}}d\gamma(x).
\end{split}
\end{equation*}
By \cite[Theorem 3.8]{mnp}, we thus have
\begin{equation*}
\begin{split}
\|(t,x) \mapsto &1_{D}(t,x)t\partial_{x_{j}}e^{\frac{a^{2}t^{2}}{\alpha}L}u(x)\|_{t^{1,2}(\gamma)}
\lesssim \int \limits_{\R^{n}} ( \int \limits _{0} ^{m(x)} \int \limits _{B(x,as)} 
\frac{1_{D}(as,y)}{\gamma(B(y,s))} |s\nabla e^{a^{2}s^{2}L}u(y)|^{2} d\gamma(y)\frac{ds}{s})^{\frac{1}{2}}d\gamma(x)\\
&\lesssim \int \limits_{\R^{n}} ( \int \limits _{0} ^{am(x)} \int \limits _{B(x,t)} 
\frac{1_{D}(t,y)}{\gamma(B(y,t))} |t\nabla e^{t^{2}L}u(y)|^{2} d\gamma(y)\frac{dt}{t})^{\frac{1}{2}}d\gamma(x)
= \|u\|_{h^{1}_{\text{quad},a}}.
\end{split}
\end{equation*}
\end{proof}
Theorem \ref{thm:main} is then proven by combining the results in the next sections as follows.\\
{\em Proof of Theorem \ref{thm:main}:}\\
For $a>0$, \cite[Theorem 1.1]{mnp2} gives that there exists $a'>0$ such that $h^{1}_{\text{max},a'}(\gamma) \subset h^{1}_{\text{quad},a}(\gamma)$.
Let us fix this $a'$ and pick $\alpha>\max(2^{38},32e^{4},4\sqrt{a}e^{2a^{2}})$, $b \geq \max(2e,\sqrt{\frac{32e^{4}}{(\alpha-32e^{4})(1-e^{-2\frac{a^{2}}{\alpha}})}})$,
  and $N> \frac{n}{4}$.
Let $u \in C_{c}^{\infty}(\R^{n})$ and apply Corollary \ref{cor:dec}.
We have that
\begin{equation*}
\begin{split}
\|u\|_{h^{1}_{\text{max},a'}(\gamma)}
\lesssim 
\|T^{*}_{a'}(\int \limits _{\R^{n}} u d\gamma)\|_{1}
&+ C \sum \limits _{j=1} ^{n} \sum \limits _{m=1} ^{\infty} |\lambda_{m,j}|
\| \int \limits _{0} ^{2} (t^{2}L)^{N}e^{\frac{t^{2}}{\alpha}L}t\partial_{x_{j}}^{*}F_{m,j}(t,.)\frac{dt}{t}\|_{h^{1}_{\text{max},a'}(\gamma)}\\
&+C \sum \limits _{j=1} ^{n}\sum \limits _{m=1} ^{\infty} |\lambda_{m,j}|
\| \int \limits _{0}^{2} 1_{[\frac{m(.)}{b},2]}(t) (t^{2}L)^{N}e^{\frac{t^{2}}{\alpha}L}t\partial_{x_{j}}^{*}F_{n,j}(t,.)\frac{dt}{t}\|_{h^{1}_{\text{max},a'}(\gamma)} 
\\&+C \sum \limits _{j=1} ^{n} 
\|\int \limits _{0} ^{\frac{m(.)}{b}} (t^{2}L)^{N}e^{\frac{t^{2}}{\alpha}L}t\partial_{x_{j}}^{*}(1_{D^{c}}(t,.)t\partial_{x_{j}}e^{\frac{a^{2}t^{2}}{\alpha}L})u\frac{dt}{t}\|_{h^{1}_{\text{max},a'}(\gamma)}
\\&+ C \|\int \limits _{\frac{m(.)}{b}} ^{\infty} (t^{2}L)^{N+1}e^{\frac{(1+a^{2})t^{2}}{\alpha}L}u \frac{dt}{t}\|_{h^{1}_{\text{max},a'}(\gamma)} + \|u\|_{L^{1}(\gamma)}.
\end{split}
\end{equation*}
Since $e^{sL}1=1$ for all $s \geq0$, we have
$$
\|T^{*}_{a'}(\int ud\gamma)\|_{1} \leq \|u\|_{1} \leq \|u\|_{h^{1}_{\text{quad},a}(\gamma)}.
$$
Proposition \ref{prop:jinf} gives that 
$$
\|\int \limits _{\frac{m(.)}{b}} ^{\infty} (t^{2}L)^{N+1}e^{\frac{(1+a^{2})t^{2}}{\alpha}L}u \frac{dt}{t}\|_{h^{1}_{\text{max},a'}(\gamma)} \lesssim \|u\|_{1} \leq \|u\|_{h^{1}_{\text{quad},a}(\gamma)}.
$$
For $j \in \{1,...,n\}$, Proposition \ref{prop:Dcomp} then gives
$$
\|\int \limits _{0} ^{\frac{m(.)}{b}} (t^{2}L)^{N}e^{\frac{t^{2}}{\alpha}L}t\partial_{x_{j}}^{*}(1_{D^{c}}(t,.)t\partial_{x_{j}}e^{\frac{a^{2}t^{2}}{\alpha}L})u\frac{dt}{t}\|_{h^{1}_{\text{max},a'}(\gamma)}\lesssim \|u\|_{1} \leq \|u\|_{h^{1}_{\text{quad},a}(\gamma)}.
$$
Proposition \ref{prop:r1} gives that
$$
\| \int \limits _{0}^{2} 1_{[\frac{m(.)}{b},2]}(t) (t^{2}L)^{N}e^{\frac{t^{2}}{\alpha}L}t\partial_{x_{j}}^{*}F_{n,j}(t,.)\frac{dt}{t}\|_{h^{1}_{\text{max},a'}}
\lesssim 1, 
 $$
 while Proposition \ref{prop:mol} combined with Theorem \ref{thm:mol} gives
  $$
 \| \int \limits _{0} ^{2} (t^{2}L)^{N}e^{\frac{t^{2}}{\alpha}L}t\partial_{x_{j}}^{*}F_{n,j}(t,.)\frac{dt}{t}\|_{h^{1}_{\text{max},a'}(\gamma)}\lesssim 1.$$
Therefore 
$$
\|u\|_{h^{1}_{\text{max},a'}(\gamma)} \lesssim \|u\|_{h^{1}_{\text{quad},a}(\gamma)} + \sum \limits _{j=1} ^{n} \sum \limits _{m=1} ^{\infty} |\lambda_{m,j}|
\lesssim \|u\|_{h^{1}_{\text{quad},a}(\gamma)}.
$$
\section{Kernel estimates}
In this section, we establish some properties of the Mehler kernel, and use them to prove the following off-diagonal decay result.
Given $a>0$, $B=B(c_{B},r_{B}) \in \mathcal{B}_{a}$ and $k \in \Z_{+}$ we consider the following sets.
$$
C_{k}(B):= \begin{cases} B(c_{B},2r_{B}) \; \text{if} \; k=0, \\ B(c_{B},2^{k+1}r_{B}) \backslash B(c_{B},2^{k}r_{B}) \; \text{otherwise}.\end{cases}
$$
\begin{lemma}[Off-diagonal estimates]
Let $N \in \Z_{+}$, $a>0$, $j\in \{1,...,n\}$, $B \in \mathcal{B}_{a}$, $\alpha\geq 4e^{2a^{2}}$, and $k\in \N$. Then for all $u \in L^{2}(\gamma)$
$$
\|1_{C_{k}(B)}1_{(0,r_{B})}(t)(t^{2N+1}L^{N}e^{\frac{t^{2}}{\alpha}L}\partial_{x_{j}}^{*})1_{B}u\|_{2} \lesssim
\exp(-\frac{\alpha}{2^{6}e^{2a^{2}}}4^{k}(\frac{r_{B}}{t})^{2})\|u\|_{2},
$$
with implied constant depending only on $\alpha$ and $N$.
\end{lemma}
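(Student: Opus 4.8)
Throughout write $s:=t^{2}/\alpha$, $q:=1-e^{-2s}$ and $P=P(x,y):=|e^{-s}x-y|^{2}$, so that $M_{s}(x,y)=\pi^{-n/2}q^{-n/2}e^{-P/q}$; since $s<r_{B}^{2}/\alpha\le a^{2}/\alpha$ is small we have $q\asymp s=t^{2}/\alpha$ and $e^{-s}\ge\tfrac12$. The plan is: (i) write down the integral kernel of the operator; (ii) prove it satisfies a Gaussian off-diagonal estimate on $\{x\in C_{k}(B),\,y\in B\}$; (iii) conclude by a Schur test on $L^{2}(\gamma)$. The recurring difficulty, and the only place admissibility of $B$ is really used, is that differentiating the Mehler kernel spawns factors of $|x|$ and $|y|$ carrying no decay, and that the passage to $L^{2}(\gamma)$ introduces the weight $e^{|y|^{2}-|x|^{2}}$; both are controlled only through the inequality $t<r_{B}\le am(c_{B})$, which forces $t|c_{B}|\le a$ and hence $|x|,|y|\lesssim a/t$ on the region of interest.

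Since $\partial_{x_{j}}^{*}=-\partial_{x_{j}}+2x_{j}$, an integration by parts gives that $e^{sL}\partial_{x_{j}}^{*}$ has Lebesgue kernel
$$K_{s}(x,y)=\partial_{y_{j}}M_{s}(x,y)+2y_{j}M_{s}(x,y)=\frac{2e^{-s}(x_{j}-e^{-s}y_{j})}{q}M_{s}(x,y),$$
and, since $L^{N}e^{sL}=\partial_{s}^{N}e^{sL}$, the operator $t^{2N+1}L^{N}e^{\frac{t^{2}}{\alpha}L}\partial_{x_{j}}^{*}$ has kernel $\mathcal{K}_{t}(x,y)=t^{2N+1}\partial_{s}^{N}K_{s}(x,y)$ (for general $u\in L^{2}(\gamma)$ its action is defined by density from $C_{c}^{\infty}(\R^{n})$). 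Next I would establish the off-diagonal lower bound: for $x\in C_{k}(B)$, $y\in B$, $k\in\N$, one has $|x-y|\ge(2^{k}-1)r_{B}$ and $|e^{-s}x-y|\ge e^{-s}|x-y|-(1-e^{-s})|y|$, while $(1-e^{-s})|y|\le s|y|<\tfrac{r_{B}^{2}}{\alpha}(|c_{B}|+r_{B})\le\tfrac{a+a^{2}}{\alpha}r_{B}$, which is a small fraction of $r_{B}$ because $\alpha\ge 4e^{2a^{2}}>a+a^{2}$; combined with $e^{-s}\ge\tfrac12$ this gives $P=|e^{-s}x-y|^{2}\gtrsim 4^{k}r_{B}^{2}$.

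I would then bound $\mathcal{K}_{t}$ pointwise on $\{x\in C_{k}(B),\,y\in B\}$. Expanding $\partial_{s}^{N}K_{s}$ by Leibniz and the chain rule, the dangerous factors are those produced by $\partial_{s}(e^{-s}x-y)_{j}=-e^{-s}x_{j}$, by $x_{j}-e^{-s}y_{j}=-(e^{-s}x-y)_{j}+qx_{j}$, and by $\partial_{s}P=-2e^{-s}x\cdot(e^{-s}x-y)$; in each case the extra $|x|$ or $|y|$ is either multiplied by $q$ or is $\lesssim a/t$, and is absorbed by the accompanying powers of $q^{-1/2}\asymp\sqrt{\alpha}/t$. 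Carrying this out and absorbing the resulting polynomial in $P/q$ into a fraction of the Gaussian yields
$$|\mathcal{K}_{t}(x,y)|\;\lesssim\;t^{-n}\,e^{-P/(2q)}\;\lesssim\;t^{-n}\,\exp\!\Big(-c\,\alpha\,4^{k}\big(\tfrac{r_{B}}{t}\big)^{2}\Big)\,e^{-P/(4q)}\qquad(x\in C_{k}(B),\ y\in B),$$
for an absolute $c>0$ and with implied constants depending only on $\alpha,N$ as in the statement.

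Finally I would invoke Schur's test. The kernel of the operator against $d\gamma(y)$ is $\mathcal{K}_{t}(x,y)e^{|y|^{2}}\pi^{n/2}$, so Schur's test on $L^{2}(\gamma)$ bounds its norm by $(S_{1}S_{2})^{1/2}$ with $S_{1}=\sup_{x\in C_{k}(B)}\int_{B}|\mathcal{K}_{t}(x,y)|\,dy$ and $S_{2}=\sup_{y\in B}\int_{C_{k}(B)}|\mathcal{K}_{t}(x,y)|e^{|y|^{2}-|x|^{2}}\,dx$. For $S_{1}$ the integral of $e^{-P/(4q)}$ in $y$ contributes $q^{n/2}\asymp t^{n}$, leaving the decay of the previous display, so $S_{1}\lesssim\exp(-c\alpha 4^{k}(r_{B}/t)^{2})$. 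For $S_{2}$ one must absorb the weight; completing the square shows
$$-\frac{P}{2q}+|y|^{2}-|x|^{2}=\frac{e^{-2s}-2}{2q}\Big|x-\frac{e^{-s}y}{2-e^{-2s}}\Big|^{2}+\frac{q}{2-e^{-2s}}|y|^{2}\;\le\;-\frac{1}{2q}\Big|x-\frac{e^{-s}y}{2-e^{-2s}}\Big|^{2}+q|y|^{2},$$
where $q|y|^{2}\le 2s|y|^{2}\lesssim\tfrac{r_{B}^{2}}{\alpha}(|c_{B}|^{2}+r_{B}^{2})\lesssim\tfrac{a^{2}+a^{4}}{\alpha}\lesssim 1$ by admissibility, and $z_{s}(y):=\tfrac{e^{-s}}{2-e^{-2s}}y$ satisfies $|z_{s}(y)-y|\le 3s|y|\lesssim r_{B}$, so that $|x-z_{s}(y)|\ge|x-y|-3s|y|\gtrsim 2^{k}r_{B}$. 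Hence $|\mathcal{K}_{t}(x,y)|e^{|y|^{2}-|x|^{2}}\lesssim t^{-n}e^{-|x-z_{s}(y)|^{2}/(2q)}$ with $|x-z_{s}(y)|\gtrsim 2^{k}r_{B}$, and integrating in $x$ as for $S_{1}$ gives $S_{2}\lesssim\exp(-c''\alpha 4^{k}(r_{B}/t)^{2})$. Combining $S_{1},S_{2}$ and keeping track of the numerical constants — where $\alpha\ge 4e^{2a^{2}}$ is used to swallow the remaining lower-order exponential factors (those from $q|y|^{2}$ and from the geometry) — produces the bound with exponent $-\frac{\alpha}{2^{6}e^{2a^{2}}}4^{k}(r_{B}/t)^{2}$. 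The delicate points are precisely the $|x|$-factors in $\partial_{s}^{N}K_{s}$ and the weight $e^{|y|^{2}-|x|^{2}}$: neither has a counterpart in the Euclidean heat-kernel setting, and both are tamed only by exploiting that $B$ is admissible, i.e.\ that $t|c_{B}|\lesssim a$.
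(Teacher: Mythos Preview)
Your plan contains a genuine gap. You assert that admissibility of $B$ forces ``$|x|,|y|\lesssim a/t$ on the region of interest''. This is true for $y\in B$ (since $|y|\le|c_{B}|+r_{B}$ and $r_{B}|c_{B}|\le a$), but it is \emph{false} for $x\in C_{k}(B)$: there $|x|\le|c_{B}|+2^{k+1}r_{B}$, and the second summand is not controlled by $a/t$; indeed $t|x|$ can be of order $2^{k}$. The dangerous factor $\sqrt{q}\,|x|$ appearing in $\partial_{s}^{N}M_{s}(x,y)$ therefore contributes a polynomial in $2^{k}$ to your pointwise kernel bound. This can be absorbed into the Gaussian $\exp(-c\,\alpha\,4^{k}(r_{B}/t)^{2})$ at the cost of halving the constant in the exponent, so the argument is salvageable, but as written the step ``in each case the extra $|x|$ or $|y|$ is \ldots\ $\lesssim a/t$'' does not go through.

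The paper avoids this difficulty altogether and obtains a much shorter proof by exploiting the Gaussian symmetry $M_{t^{2}}(y,x)\,e^{|x|^{2}-|y|^{2}}=M_{t^{2}}(x,y)$. Writing the kernel by duality as $\tilde{K}_{t^{2},N,\alpha,j}(x,y)=t^{2N+1}\partial_{y_{j}}\partial_{s}^{N}M_{s}(y,x)\big|_{s=t^{2}/\alpha}\,e^{|x|^{2}-|y|^{2}}$ puts $y$ in the first slot of the Mehler kernel, so the polynomial prefactor from Corollary~\ref{cor:Ktilde} involves only $\sqrt{q}\,y_{j}$ (and $(e^{-s}y-x)/\sqrt{q}$), never a bare $x_{j}$. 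Since $y\in B$ gives $t\lesssim m(y)$, Lemma~\ref{lem:est}(iii) then yields the clean pointwise bound $|\tilde{K}|\lesssim\exp\!\big(-\tfrac{\alpha}{4e^{2a^{2}}}\tfrac{|e^{-t^{2}}y-x|^{2}}{1-e^{-2t^{2}}}\big)M_{t^{2}}(x,y)$. After extracting the off-diagonal factor (using $|y-x|\ge(2^{k}-1)r_{B}$ and $|e^{-t^{2}}y-x|\ge|y-x|-t^{2}|y|$), the remaining $M_{t^{2}}(x,y)$ is handled by the $L^{2}(\gamma)$-contractivity of $e^{t^{2}L}$, so no Schur test or completing-the-square with the weight $e^{|y|^{2}-|x|^{2}}$ is needed. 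In short: your route can be made to work, but the paper's use of the kernel symmetry to shift all unbounded polynomial growth onto the variable lying in $B$ is the cleaner and more robust idea.
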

The above lemma plays a key role in the next sections, and could be deduced from more general methods giving $L^2$ off-diagonal bounds 
(see \cite{cs} or \cite{mcm}). We prove it through direct kernel estimates which are used in various parts of the paper. In the next sections, it will become clear that one needs off-diagonal decay of the form $\exp(-c4^{k})$ with $c$ large enough to compensate for the growth in Lemma \ref{lem:mm}. This is the reason why we use $e^{\frac{(1+a^{2})t^{2}}{\alpha}L}$ in the reproducing formula and pick $\alpha$ large enough.\\

Given $t,\alpha>0$, $j \in \{1,...,n\}$, and $N \in \Z_{+}$, we denote by $K_{t^{2},N,\alpha}$ and $\tilde{K}_{t^{2},N,\alpha,j}$ the relevant kernels defined, given $u \in L^{2}(\gamma)$, by
\begin{equation*}
\begin{split}
\int \limits _{\R^{n}} K_{t^{2},N,\alpha}(x,y)u(y)dy &= (t^{2}L)^{N}e^{\frac{t^{2}}{\alpha}L}u(x),\\
\int \limits _{\R^{n}} \tilde{K}_{t^{2},N,\alpha,j}(x,y)u(y)dy &= (t^{2}L)^{N}e^{\frac{t^{2}}{\alpha}L}t\partial_{x_{j}}^{*}u(x).
\end{split}
\end{equation*}
Note that $K_{t^{2},N,\alpha}(x,y) = t^{2N}\partial_{s}^{N}M_{s}(x,y)_{|s=\frac{t^{2}}{\alpha}}$, and that, by duality
$$\tilde{K}_{t^{2},N,\alpha,j}(x,y) = t^{2N+1}\partial_{y_{j}}\partial_{s}^{N}M_{s}(y,x)_{|s=\frac{t^{2}}{\alpha}}\exp(|x|^{2}-|y|^{2}).$$
To prove Lemma \ref{lem:od}, we need preparatory lemmas of independent interest.
\begin{lemma}
\label{lem:K}
Let $N \in \Z_{+}$. There exists $C_{N}\in \N$ and a polynomial of $2n+1$ variables $P_{N}$ of degree $C_{N}$ such that
for all $x,y \in \R^{n}$ and $s>0$:
$$
\partial_{s}^{N}M_{s}(x,y) = (1-e^{-2s})^{-N}P_{N}(e^{-s},(\frac{e^{-s}x_{j}-y_{j}}{\sqrt{1-e^{-2s}}})_{j=1,...,n}  , (\sqrt{1-e^{-2s}}x_{j})_{j=1,...,n})M_{s}(x,y).
$$
\end{lemma}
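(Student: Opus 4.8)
The statement to prove is Lemma \ref{lem:K}, which asserts a structural formula for the $N$-th time derivative of the Mehler kernel.

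My plan is to proceed by induction on $N$, since the key features — the power $(1-e^{-2s})^{-N}$, the polynomial in the three groups of variables $e^{-s}$, $u_j := \frac{e^{-s}x_j - y_j}{\sqrt{1-e^{-2s}}}$, and $v_j := \sqrt{1-e^{-2s}}\,x_j$ — should all be stable under a single differentiation in $s$. First I would set up convenient notation: write $M_s(x,y) = \pi^{-n/2}(1-e^{-2s})^{-n/2}\exp(-|u|^2)$ where $u = u(s)$ is the vector with components $u_j$ above, and observe that $v_j = v_j(s)$ is the other relevant family. The base case $N=0$ is trivial with $P_0 \equiv 1$ and $C_0 = 0$.

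For the inductive step, assume $\partial_s^N M_s(x,y) = (1-e^{-2s})^{-N} P_N(e^{-s}, u, v)\, M_s(x,y)$. Differentiating once in $s$ produces three contributions: (a) the derivative of the prefactor $(1-e^{-2s})^{-N}$, which yields $-N \cdot 2e^{-2s}(1-e^{-2s})^{-N-1}$, i.e.\ it raises the power by one and multiplies by a polynomial in $e^{-s}$; (b) the derivative of $P_N$, which by the chain rule involves $\partial_s(e^{-s}) = -e^{-s}$, and $\partial_s u_j$, $\partial_s v_j$ — the crucial computation is to check these are again of the allowed form divided by a suitable power of $(1-e^{-2s})$; and (c) the derivative of $M_s$ itself, which by the known identity $\partial_s M_s = L_x M_s$ (or directly from the exponential) contributes $-\partial_s(|u|^2)\,M_s$ plus the prefactor's log-derivative — again reducible to a polynomial in the three variable groups over a power of $(1-e^{-2s})$. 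The key elementary facts are $\partial_s(e^{-s}) = -e^{-s}$, $\partial_s\big((1-e^{-2s})^{-1/2}\big) = -e^{-2s}(1-e^{-2s})^{-3/2}$, and $\partial_s(e^{-s}(1-e^{-2s})^{-1/2}) = -(1-e^{-2s})^{-3/2}e^{-s}$; from these one reads off $\partial_s u_j = -(1-e^{-2s})^{-1}(e^{-s}v_j + e^{-2s} u_j)$ — wait, one must be careful here — more precisely one expresses $\partial_s u_j$ as $e^{-s}$ times a polynomial in $(e^{-s}, u_j, v_j)$ divided by $(1-e^{-2s})$, and similarly $\partial_s v_j = -(1-e^{-2s})^{-1}(e^{-2s}v_j - (1-e^{-2s})e^{-s}x_j)$; here $x_j = v_j/\sqrt{1-e^{-2s}}$ would reintroduce a bad factor, so instead I would carry $\sqrt{1-e^{-2s}}\,x_j = v_j$ throughout and note $\partial_s v_j = \frac{e^{-2s}}{\sqrt{1-e^{-2s}}}x_j = \frac{e^{-2s}}{1-e^{-2s}}v_j$, which is fine. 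Collecting all three contributions over the common denominator $(1-e^{-2s})^{N+1}$ gives $\partial_s^{N+1}M_s = (1-e^{-2s})^{-N-1}P_{N+1}(e^{-s},u,v)M_s$ with $P_{N+1}$ a polynomial whose degree $C_{N+1}$ is bounded in terms of $C_N$ (it increases by a fixed amount at each step), completing the induction.

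The main obstacle is bookkeeping: one must verify that every derivative that appears — of the prefactor, of the monomials building $P_N$, and of $M_s$ — when written over the denominator $(1-e^{-2s})^{N+1}$ has a numerator that is genuinely polynomial in $(e^{-s}, u, v)$ with no residual negative powers of $(1-e^{-2s})$ hiding inside. The potential trap is the factor $x_j$ (not $v_j$) appearing in $\partial_s v_j$; handling this correctly — by always re-expressing $x_j$ through $v_j$ and absorbing the resulting $\sqrt{1-e^{-2s}}$ into the denominator count — is the delicate point, but it works out because $\partial_s v_j$ turns out to equal $\frac{e^{-2s}}{1-e^{-2s}}v_j$, cleanly of the required form. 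Everything else is routine application of the product and chain rules together with these three elementary $s$-derivative identities.
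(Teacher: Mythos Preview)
Your proposal is correct and follows essentially the same route as the paper: induction on $N$, with the inductive step driven by the three elementary computations $\partial_s u_j = -(1-e^{-2s})^{-1}(e^{-s}v_j + e^{-2s}u_j)$, $\partial_s v_j = (1-e^{-2s})^{-1}e^{-2s}v_j$, and the expression of $\partial_s M_s$ as $(1-e^{-2s})^{-1}$ times a polynomial in $(e^{-s},u,v)$ times $M_s$. The paper simply lists these identities and then says ``the proof follows by induction''; your write-up makes the bookkeeping explicit, but the argument is the same.
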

\begin{proof}
Let $j\in\{1,...,n\}$, $s>0$, $x,y \in \R^{n}$. We have the following.
\begin{equation*}
\begin{split}
\partial_{s}(\frac{e^{-s}x_{j}-y_{j}}{\sqrt{1-e^{-2s}}}) &= -(1-e^{-2s})^{-1}(e^{-s}x_{j}\sqrt{1-e^{-2s}}+e^{-2s}\frac{e^{-s}x_{j}-y_{j}}{\sqrt{1-e^{-2s}}}).\\
\partial_{s}(\sqrt{1-e^{-2s}}x_{j}) &= (1-e^{-2s})^{-1}(e^{-2s}\sqrt{1-e^{-2s}}x_{j}).\\
\partial_{s}M_{s}(x,y) &= -(1-e^{-2s})^{-1}ne^{-2s}M_{s}(x,y)-M_{s}(x,y)\partial_{s}(\frac{|e^{-s}x-y|^{2}}{1-e^{-2s}}).\\
\partial_{s}(\frac{(e^{-s}x_{j}-y_{j})^{2}}{1-e^{-2s}}) &= -(1-e^{-2s})^{-1}((2e^{-s}\sqrt{1-e^{-2s}}x_{j})(\frac{e^{-s}x_{j}-y_{j}}{\sqrt{1-e^{-2s}}})
+(\frac{e^{-s}x_{j}-y_{j}}{\sqrt{1-e^{-2s}}})^{2}2e^{-2s}).
\end{split}
\end{equation*}
The proof thus follows by induction. 
\end{proof}
Computing partial derivatives in $x_{j}$ one obtains in the same way:
\begin{corollary}
\label{cor:Ktilde}
Let $N \in \Z_{+}$ and $j\in\{1,...,n\}$. There exists $C_{N}\in \N$ and a polynomial of $2n+1$ variables $Q_{N}$ of degree $C_{N}$ such that
for all $x,y \in \R^{n}$ and $s>0$:
$$
\partial _{x_{j}}\partial_{s}^{N}M_{s}(x,y) = (1-e^{-2s})^{-(N+\frac{1}{2})}Q_{N}(e^{-s},(\frac{e^{-s}x_{j}-y_{j}}{\sqrt{1-e^{-2s}}})_{j=1,...,n}  , (\sqrt{1-e^{-2s}}x_{j})_{j=1,...,n})M_{s}(x,y).
$$
\end{corollary}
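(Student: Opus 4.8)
\emph{Proof proposal.} The plan is to deduce the corollary directly from Lemma \ref{lem:K} by applying a single partial derivative $\partial_{x_j}$ to the identity
$$
\partial_{s}^{N}M_{s}(x,y) = (1-e^{-2s})^{-N}P_{N}\big(e^{-s},(\tfrac{e^{-s}x_{i}-y_{i}}{\sqrt{1-e^{-2s}}})_{i=1,\dots,n}, (\sqrt{1-e^{-2s}}\,x_{i})_{i=1,\dots,n}\big)M_{s}(x,y),
$$
and checking, via the Leibniz rule, that each $x_j$-derivative costs exactly one further factor of $(1-e^{-2s})^{-1/2}$, all remaining quantities being polynomials in the three families of variables that already occur as the arguments of $P_N$.

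First I would record the elementary identities
$$
\partial_{x_j}\Big(\tfrac{e^{-s}x_i-y_i}{\sqrt{1-e^{-2s}}}\Big) = \delta_{ij}\,\tfrac{e^{-s}}{\sqrt{1-e^{-2s}}},\qquad \partial_{x_j}\big(\sqrt{1-e^{-2s}}\,x_i\big) = \delta_{ij}\,\sqrt{1-e^{-2s}},
$$
together with $\partial_{x_j}M_s(x,y) = -\,\tfrac{2e^{-s}(e^{-s}x_j-y_j)}{1-e^{-2s}}\,M_s(x,y) = -\,2\,(1-e^{-2s})^{-1/2}\,e^{-s}\,\tfrac{e^{-s}x_j-y_j}{\sqrt{1-e^{-2s}}}\,M_s(x,y)$. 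Writing $\partial_{\xi_j}P_N$ and $\partial_{\eta_j}P_N$ for the partial derivatives of $P_N$ in the slots holding $\tfrac{e^{-s}x_j-y_j}{\sqrt{1-e^{-2s}}}$ and $\sqrt{1-e^{-2s}}\,x_j$, the chain rule gives
$$
\partial_{x_j}\big[P_N(e^{-s},(\cdot)_i,(\cdot)_i)\big] = (1-e^{-2s})^{-1/2}\Big(e^{-s}\,\partial_{\xi_j}P_N + (1-e^{-2s})\,\partial_{\eta_j}P_N\Big)(e^{-s},(\cdot)_i,(\cdot)_i),
$$
where I used that $1-e^{-2s}=1-(e^{-s})^2$ is itself a polynomial in $e^{-s}$, so the bracket is again a polynomial in the same $2n+1$ arguments.

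Next I would apply the Leibniz rule to the identity of Lemma \ref{lem:K},
$$
\partial_{x_j}\partial_s^N M_s(x,y) = (1-e^{-2s})^{-N}\Big[\big(\partial_{x_j}P_N(\cdot)\big)M_s(x,y) + P_N(\cdot)\,\partial_{x_j}M_s(x,y)\Big],
$$
and substitute the two computations above. Each of the two resulting terms equals $(1-e^{-2s})^{-(N+1/2)}$ times a polynomial in $\big(e^{-s},(\tfrac{e^{-s}x_i-y_i}{\sqrt{1-e^{-2s}}})_i,(\sqrt{1-e^{-2s}}\,x_i)_i\big)$ times $M_s(x,y)$: in the first term the extra $(1-e^{-2s})^{-1/2}$ comes from differentiating the arguments of $P_N$, in the second it comes from differentiating $M_s$. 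Taking $Q_N$ to be the sum of these two polynomials yields the statement, and its degree is bounded in terms of that of $P_N$, increased by at most $2$ (the factor $1-e^{-2s}$ in the chain rule, respectively the degree-$2$ monomial $e^{-s}\tfrac{e^{-s}x_j-y_j}{\sqrt{1-e^{-2s}}}$ multiplying $P_N$).

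I do not expect a genuine obstacle: the only point requiring care is the bookkeeping of the powers of $(1-e^{-2s})^{-1/2}$ and $e^{-s}$ — one must verify that an $x_j$-derivative never produces a negative power of $e^{-s}$ nor more than one extra factor $(1-e^{-2s})^{-1/2}$, which is exactly what the three elementary identities above guarantee. Alternatively, the same conclusion can be obtained by rerunning the induction of Lemma \ref{lem:K} with the derivative $\partial_{x_j}$ present from the outset; either route is routine.
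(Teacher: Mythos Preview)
Your proposal is correct and matches the paper's approach: the paper simply states that the corollary follows by ``computing partial derivatives in $x_j$'' in the same way as Lemma~\ref{lem:K}, which is exactly the Leibniz/chain-rule computation you carry out. Your careful bookkeeping of the extra factor $(1-e^{-2s})^{-1/2}$ and the polynomial dependence on $e^{-s}$ is precisely what the paper leaves implicit.
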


\begin{lemma} \label{lem:slow2}
For $a,C>0$, $\alpha >1,$ $t \in (0,a],$ and $x,y \in \R^n$ we have
\begin{enumerate}[(i)]
\item
$\exp(-C\frac{|e^{-\frac{t^{2}}{\alpha}}x-y|^{2}}{1-e^{-2\frac{t^{2}}{\alpha}}})
\leq \exp(-C\frac{\alpha}{2e^{2a^{2}}}\frac{|e^{-t^{2}}x-y|^{2}}{1-e^{-2t^{2}}})\exp(C\frac{t^{4}|x|^{2}}{1-e^{-2\frac{t^{2}}{\alpha}}}).$
\item
$\exp(-C\frac{|e^{-\frac{t^{2}}{\alpha}}x-y|^{2}}{1-e^{-2\frac{t^{2}}{\alpha}}})
\leq \exp(-C\frac{\alpha}{2e^{2a^{2}}}\frac{|e^{-t^{2}}x-y|^{2}}{1-e^{-2t^{2}}})\exp(C\frac{t^{4}|y|^{2}}{1-e^{-2\frac{t^{2}}{\alpha}}}).$

\end{enumerate}
 \end{lemma}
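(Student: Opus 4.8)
The plan is to reduce each inequality, by taking logarithms and dividing by the negative constant $-C$, to a pointwise comparison of the Gaussian exponents appearing in the Mehler kernel at the two times $s:=t^{2}/\alpha$ and $T:=t^{2}$. Note that $0<s<T\le a^{2}$, since $\alpha>1$ and $t\le a$. With this notation, (i) is equivalent to
\[
\frac{\alpha}{2e^{2a^{2}}}\,\frac{|e^{-T}x-y|^{2}}{1-e^{-2T}}\ \le\ \frac{|e^{-s}x-y|^{2}}{1-e^{-2s}}+\frac{t^{4}|x|^{2}}{1-e^{-2s}},
\]
and (ii) is the same inequality with $|x|^{2}$ replaced by $|y|^{2}$ on the right-hand side.

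First I would estimate the numerators. For (i) I use the splitting $e^{-T}x-y=(e^{-s}x-y)+(e^{-T}-e^{-s})x$, together with $|e^{-T}-e^{-s}|=e^{-s}\bigl(1-e^{-(T-s)}\bigr)\le T-s\le t^{2}$, to get $|e^{-T}x-y|^{2}\le 2|e^{-s}x-y|^{2}+2t^{4}|x|^{2}$. For (ii) I use instead $e^{-T}x-y=e^{-(T-s)}(e^{-s}x-y)+(e^{-(T-s)}-1)y$, with $e^{-(T-s)}\le1$ and $|e^{-(T-s)}-1|\le T-s\le t^{2}$, which gives $|e^{-T}x-y|^{2}\le 2|e^{-s}x-y|^{2}+2t^{4}|y|^{2}$.

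Next I would bound the ratio of denominators. From $1-e^{-u}\le u$ I get $1-e^{-2s}\le 2t^{2}/\alpha$, and from $1-e^{-u}\ge ue^{-u}$ together with $T\le a^{2}$ I get $1-e^{-2T}\ge 2t^{2}e^{-2T}\ge 2t^{2}e^{-2a^{2}}$; hence
\[
\frac{1}{1-e^{-2T}}\ \le\ \frac{e^{2a^{2}}}{\alpha}\cdot\frac{1}{1-e^{-2s}}.
\]
Plugging the numerator bound and this denominator bound into the left-hand side of the displayed inequality, the factor $\tfrac{\alpha}{2e^{2a^{2}}}$ cancels $\tfrac{e^{2a^{2}}}{\alpha}$ and the remaining $\tfrac12$ cancels the $2$ coming from the numerator estimate, leaving exactly $\frac{|e^{-s}x-y|^{2}}{1-e^{-2s}}+\frac{t^{4}|x|^{2}}{1-e^{-2s}}$ (respectively with $|y|^{2}$), which is what is needed.

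The argument is entirely elementary; the only points requiring a little care are choosing, in each of (i) and (ii), the algebraic decomposition of $e^{-T}x-y$ that transfers the error onto $|x|$ respectively $|y|$, and keeping track of the reversal of the inequality when one divides by $-C<0$. I do not anticipate a genuine obstacle.
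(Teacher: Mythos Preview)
Your proof is correct and follows essentially the same route as the paper: the same algebraic decomposition of $e^{-T}x-y$ (shifting the error onto $x$ for (i), onto $y$ for (ii)) together with the elementary bound $|u+v|^{2}\le 2|u|^{2}+2|v|^{2}$, combined with the ratio estimate $\dfrac{1-e^{-2s}}{1-e^{-2T}}\le \dfrac{e^{2a^{2}}}{\alpha}$. The only cosmetic difference is that the paper obtains this last ratio via the mean value theorem applied to $\xi\mapsto\xi^{\alpha}$, whereas you use the equivalent elementary bounds $1-e^{-u}\le u$ and $1-e^{-u}\ge ue^{-u}$; both yield the same constant.
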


\begin{proof}
Let $t\in (0,a]$ and $\alpha>1 $. Applying the mean value theorem to $f(\xi) = \xi^\alpha$, we have
 \begin{align*}
    \frac{1-e^{-2t^2}}{1-e^{-\frac{2t^2}{\alpha}}}
       = \alpha \hat\xi^{\alpha-1}
 \end{align*}
for some $\hat \xi \in [e^{-2t^2/\alpha} ,1].$ Therefore,
 \begin{align*}
    \alpha e^{-2a^{2}}
     \leq \alpha e^{-\frac{2t^2(\alpha-1)}{\alpha}}
     \leq \frac{1-e^{-2t^2}}{1-e^{-\frac{2t^2}{\alpha}}}
     \leq \alpha.
 \end{align*}
To prove (i), we notice that 
$$
|e^{-\frac{t^{2}}{\alpha}}x-y| \geq |e^{-t^{2}}x-y| - |e^{-t^{2}}-e^{-\frac{t^{2}}{\alpha}}||x|
\geq |e^{-t^{2}}x-y| -t^{2}|x|,
$$
and thus, by Cauchy-Schwarz:
$$
|e^{-\frac{t^{2}}{\alpha}}x-y|^{2} \geq \frac{|e^{-t^{2}}x-y|^{2}}{2} - t^{4}|x|^{2}.
$$
This gives
\begin{equation*}
\begin{split}
\exp(-C\frac{|e^{-\frac{t^{2}}{\alpha}}x-y|^{2}}{1-e^{-2\frac{t^{2}}{\alpha}}})
&\leq \exp(-\frac{C}{2}(\frac{1-e^{-2t^{2}}}{1-e^{-2\frac{t^{2}}{\alpha}}})\frac{|e^{-t^{2}}x-y|^{2}}{1-e^{-2t^{2}}})\exp(C\frac{t^{4}|x|^{2}}{1-e^{-2\frac{t^{2}}{\alpha}}})
\\&\leq \exp(-C\frac{\alpha}{2e^{2a^{2}}}\frac{|e^{-t^{2}}x-y|^{2}}{1-e^{-2t^{2}}})\exp(C\frac{t^{4}|x|^{2}}{1-e^{-2\frac{t^{2}}{\alpha}}}).
\end{split}
\end{equation*}
The estimate (ii) is proven in the same way, noticing that
$$
|e^{-\frac{t^{2}}{\alpha}}x-y| \geq e^{(\frac{\alpha-1}{\alpha})t^{2}}|e^{-t^{2}}x-e^{-(\frac{\alpha-1}{\alpha})t^{2}}y|
\geq  |e^{-t^{2}}x-y|- |1-e^{-(\frac{\alpha-1}{\alpha})t^{2}}||y|
\geq |e^{-t^{2}}x-y| -t^{2}|y|.
$$
 \end{proof}

 \begin{lemma}
 \label{lem:est}
Let $N \in \Z_{+}$, $j \in \{1,...,n\}$, $a>0$ and $\alpha \geq 4e^{2a^{2}}$. Let $x,y \in \R^{n}$ and $t\in (0,a]$.  
\begin{enumerate}[(i)]
\item  If $t \lesssim m(y)$ then $M_{\frac{t^{2}}{\alpha}}(x,y) \lesssim \exp(-\frac{\alpha}{2e^{2a^{2}}}\frac{|e^{-t^{2}}x-y|^{2}}{1-e^{-2t^{2}}})M_{t^{2}}(x,y)$.
 \item If $t \lesssim m(x)$ then $|K_{t^{2},N,\alpha}(x,y)|\lesssim  
 \exp(-\frac{\alpha}{4e^{2a^{2}}}\frac{|e^{-t^{2}}x-y|^{2}}{1-e^{-2t^{2}}})M_{t^{2}}(x,y)$.
 \item  If $t \lesssim m(y)$ then $|\tilde{K}_{t^{2},N,\alpha,j}(x,y)|\lesssim \exp(-\frac{\alpha}{4e^{2a^{2}}}\frac{|e^{-t^{2}}y-x|^{2}}{1-e^{-2t^{2}}})M_{t^{2}}(x,y)$.
 \end{enumerate}
 \end{lemma}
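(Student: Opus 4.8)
The plan is to obtain all three bounds by a direct computation on the Mehler kernel, the only analytic inputs being the explicit derivative formulas of Lemma~\ref{lem:K} and Corollary~\ref{cor:Ktilde} and elementary comparison inequalities of the kind used in the proof of Lemma~\ref{lem:slow2}. I would first record the routine facts, valid for $t\in(0,a]$ and $\alpha\geq4e^{2a^{2}}$: $1-e^{-2t^{2}/\alpha}\gtrsim t^{2}$; the two-sided bound $\alpha e^{-2a^{2}}\leq\frac{1-e^{-2t^{2}}}{1-e^{-2t^{2}/\alpha}}\leq\alpha$ from the proof of Lemma~\ref{lem:slow2}, whence $(1-e^{-2t^{2}/\alpha})^{-n/2}\lesssim(1-e^{-2t^{2}})^{-n/2}$; and $t^{2M}(1-e^{-2t^{2}/\alpha})^{-M}\lesssim1$ for $M=N$ and $M=N+\tfrac12$. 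These make the scalar prefactors below harmless. I would also use the identity $M_{s}(y,x)e^{|x|^{2}-|y|^{2}}=M_{s}(x,y)$ (immediate from comparing exponents), which re-centres the Gaussian exponent of $M_{s}(x,y)$ at $y$ instead of at $x$.

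For (i), I would divide by $M_{t^{2}}(x,y)>0$ and set $\rho:=\frac{|e^{-t^{2}}x-y|^{2}}{1-e^{-2t^{2}}}$. Since the prefactor $\big(\frac{1-e^{-2t^{2}}}{1-e^{-2t^{2}/\alpha}}\big)^{n/2}$ is $\leq\alpha^{n/2}$, it suffices to show $\frac{|e^{-t^{2}/\alpha}x-y|^{2}}{1-e^{-2t^{2}/\alpha}}\geq\big(1+\frac{\alpha}{2e^{2a^{2}}}\big)\rho-O(1)$. I would write $|e^{-t^{2}/\alpha}x-y|=e^{\mu}\,|e^{-t^{2}}x-e^{-\mu}y|$ with $\mu:=\frac{\alpha-1}{\alpha}t^{2}\geq0$, estimate $|e^{-t^{2}}x-e^{-\mu}y|\geq|e^{-t^{2}}x-y|-(1-e^{-\mu})|y|\geq|e^{-t^{2}}x-y|-t^{2}|y|$, apply $(p-q)^{2}\geq(1-\theta)p^{2}-\theta^{-1}q^{2}$ with $\theta:=\frac12-\frac{e^{2a^{2}}}{\alpha}\in[\tfrac14,\tfrac12)$ (this membership is exactly where $\alpha\geq4e^{2a^{2}}$ is used), and then use $e^{2\mu}\geq1$ together with $\frac{1-e^{-2t^{2}}}{1-e^{-2t^{2}/\alpha}}\geq\alpha e^{-2a^{2}}$ to obtain $\frac{|e^{-t^{2}/\alpha}x-y|^{2}}{1-e^{-2t^{2}/\alpha}}\geq(1-\theta)\alpha e^{-2a^{2}}\rho-\frac{\theta^{-1}t^{4}|y|^{2}}{1-e^{-2t^{2}/\alpha}}$. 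Now $(1-\theta)\alpha e^{-2a^{2}}=1+\frac{\alpha}{2e^{2a^{2}}}$, and since $t\lesssim m(y)$ gives $t^{2}|y|^{2}\lesssim1$ while $1-e^{-2t^{2}/\alpha}\gtrsim t^{2}$, the last term is $O(1)$; this proves (i).

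For (ii) and (iii), I would substitute the kernel formulas. Lemma~\ref{lem:K} gives $|K_{t^{2},N,\alpha}(x,y)|=t^{2N}(1-e^{-2s})^{-N}\big|P_{N}\big(e^{-s},(\tfrac{e^{-s}x_{j}-y_{j}}{\sqrt{1-e^{-2s}}})_{j},(\sqrt{1-e^{-2s}}\,x_{j})_{j}\big)\big|\,M_{s}(x,y)$ at $s=t^{2}/\alpha$, and Corollary~\ref{cor:Ktilde} together with the identity above gives the analogue for $\tilde K_{t^{2},N,\alpha,j}$ with $N$ replaced by $N+\tfrac12$, $P_{N}$ by $Q_{N}$, and $x$ by $y$ throughout (so the middle block is $\tfrac{e^{-s}y_{j}-x_{j}}{\sqrt{1-e^{-2s}}}$). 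In either polynomial the first argument lies in $(0,1)$ and the last block is $O(1)$, because $\sqrt{1-e^{-2t^{2}/\alpha}}\,|x|\lesssim\frac{t}{\sqrt\alpha}|x|\lesssim1$ when $t\lesssim m(x)$ (resp.\ with $y$, when $t\lesssim m(y)$); hence the polynomial is $\lesssim\big(1+\frac{|e^{-s}x-y|^{2}}{1-e^{-2s}}\big)^{C_{N}/2}$ (resp.\ with $|e^{-s}y-x|$). Using $(1+r)^{C_{N}/2}\leq c_{\varepsilon}e^{\varepsilon r}$ I would absorb this factor into $M_{s}$, which merely weakens its Gaussian exponent from $1$ to $1-\varepsilon$, and then rerun the estimate of (i) with $1$ replaced by $1-\varepsilon$: for $K$ using the $|x|$-error version (valid since $t\lesssim m(x)$), and for $\tilde K$ first re-centring $M_{s}(x,y)$ at $y$ via the identity and then using the $|y|$-error version (valid since $t\lesssim m(y)$). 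Choosing $\varepsilon$ and $\theta$ small enough and using $\alpha\geq4e^{2a^{2}}$, the decay rate comes out as $(1-\varepsilon)(1-\theta)\alpha e^{-2a^{2}}-1\geq\frac{\alpha}{4e^{2a^{2}}}$, which is (ii) and (iii).

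The hard part is bookkeeping rather than conceptual. Because $M_{s}$ is not symmetric and the derivative in $\tilde K$ falls on $y_{j}$, one must track in each kernel which centre the Gaussian exponent sits at, and, crucially, arrange that in every application of the comparison inequality the discarded error term — $t^{4}|y|^{2}$ in (i) and (iii), $t^{4}|x|^{2}$ in (ii) — is the one controlled by the admissibility hypothesis actually in force; it is this matching that dictates the rewriting $|e^{-t^{2}/\alpha}x-y|=e^{\mu}|e^{-t^{2}}x-e^{-\mu}y|$ (resp.\ the use of the symmetry identity for $\tilde K$) above. A secondary, quantitative, point is that the polynomial factors must not be allowed to eat the Gaussian decay, which is why an arbitrarily small $\varepsilon$ is peeled off, and why the slightly sharper Cauchy--Schwarz split (coefficient $1$ rather than $2$ on the error term, compared with the proof of Lemma~\ref{lem:slow2}) is needed to land on the stated constants — and this is precisely where the assumption $\alpha\geq4e^{2a^{2}}$ is spent.
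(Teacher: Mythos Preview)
Your proof is correct and takes essentially the same route as the paper: part (i) via the comparison inequality of Lemma~\ref{lem:slow2}, parts (ii) and (iii) via the derivative formulas of Lemma~\ref{lem:K} and Corollary~\ref{cor:Ktilde}, the $w^{k}e^{-Cw^{2}}$ trick to absorb the polynomial factor into the Gaussian, and the symmetry identity $M_{s}(y,x)e^{|x|^{2}-|y|^{2}}=M_{s}(x,y)$ for $\tilde K$. The only genuine addition you make is the quantitative one in (i): a bare citation of Lemma~\ref{lem:slow2} yields $\frac{\alpha}{2e^{2a^{2}}}\rho$ in the exponent, which falls short by exactly the $\exp(-\rho)$ already sitting inside $M_{t^{2}}(x,y)$; your parametrised split $(p-q)^{2}\geq(1-\theta)p^{2}-\theta^{-1}q^{2}$ with $\theta=\tfrac12-\tfrac{e^{2a^{2}}}{\alpha}$ recovers precisely $(1+\tfrac{\alpha}{2e^{2a^{2}}})\rho$, and this is where the hypothesis $\alpha\geq4e^{2a^{2}}$ is actually spent.
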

\begin{proof}
(i) follows from Lemma \ref{lem:slow2}.\\
(ii) follows from Lemma \ref{lem:K} and Lemma \ref{lem:slow2} using that $\underset{w>0}{\sup}w^{k}e^{-Cw^{2}}<\infty$ for all $k\geq0$ and $C>0$.\\
(iii) follows from Corollary \ref{cor:Ktilde} and Lemma \ref{lem:slow2} in the same way, using that $$M_{t^{2}}(y,x)\exp(|x|^{2}-|y|^{2}) = M_{t^{2}}(x,y).$$
\end{proof}
We can now prove our main lemma.
\begin{lemma}[Off-diagonal estimates]
\label{lem:od}
Let $N \in \Z_{+}$, $a>0$, $j \in \{1,...,n\}$, $B \in \mathcal{B}_{a}$, $\alpha>4e^{2a^{2}}$, and $k \in \N$. Then for all $u \in L^{2}(\gamma)$
$$
\|1_{C_{k}(B)}1_{(0,r_{B})}(t)(t^{2N+1}L^{N}e^{\frac{t^{2}}{\alpha}L}\partial_{x_{j}}^{*})1_{B}u\|_{2} \lesssim
\exp(-\frac{\alpha}{2^{6}e^{2a^{2}}}4^{k}(\frac{r_{B}}{t})^{2})\|u\|_{2},
$$
with implied constant depending only on $\alpha$, $a$ and $N$.
\end{lemma}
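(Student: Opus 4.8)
Since the characteristic function $1_{B}$ and the cut-off $1_{(0,r_{B})}(t)$ already appear, the plan is to reduce at once to the case $u=1_{B}u$ and $0<t<r_{B}$, and to abbreviate $T:=t^{2N+1}L^{N}e^{\frac{t^{2}}{\alpha}L}\partial_{x_{j}}^{*}=(t^{2}L)^{N}e^{\frac{t^{2}}{\alpha}L}t\partial_{x_{j}}^{*}$, whose kernel against Lebesgue measure is $\tilde K_{t^{2},N,\alpha,j}$. Two soft ingredients are needed. First, $T$ is bounded on $L^{2}(\gamma)$ \emph{uniformly in $t$}: writing $T=[t^{2N}L^{N}e^{\frac{t^{2}}{2\alpha}L}]\,[e^{\frac{t^{2}}{2\alpha}L}t\partial_{x_{j}}^{*}]$, spectral calculus gives $\|t^{2N}L^{N}e^{\frac{t^{2}}{2\alpha}L}\|_{2\to2}=(2\alpha N/e)^{N}$, while the Dirichlet form identity $\langle -Lf,f\rangle_{L^{2}(\gamma)}=\tfrac12\|\nabla f\|_{L^{2}(\gamma)}^{2}$ and $\|(-L)e^{\frac{t^{2}}{\alpha}L}\|_{2\to2}=\frac{\alpha}{et^{2}}$ give $\|e^{\frac{t^{2}}{2\alpha}L}t\partial_{x_{j}}^{*}\|_{2\to2}=\|t\partial_{x_{j}}e^{\frac{t^{2}}{2\alpha}L}\|_{2\to2}\le\sqrt{2\alpha/e}$; hence $\|T\|_{2\to2}\lesssim1$ with constant depending only on $N,\alpha$. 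Second, the pointwise kernel bound of Lemma~\ref{lem:est}(iii) is available: for $y\in B=B(c_{B},r_{B})\in\mathcal B_{a}$ and $0<t<r_{B}$, Lemma~\ref{lem:mnp1} gives $t<r_{B}\le am(c_{B})\le a(1+a)m(y)$, so Lemma~\ref{lem:est}(iii) yields, for all $x\in\R^{n}$,
$$
|\tilde K_{t^{2},N,\alpha,j}(x,y)|\lesssim\exp\!\Big(-\tfrac{\alpha}{4e^{2a^{2}}}\,\tfrac{|e^{-t^{2}}y-x|^{2}}{1-e^{-2t^{2}}}\Big)M_{t^{2}}(x,y),
$$
with implied constant depending only on $N,\alpha,a$. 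I also record the elementary fact that $0<t<r_{B}\le am(c_{B})$ forces $t^{2}|c_{B}|<ar_{B}$ (if $|c_{B}|\ge1$ then $t^{2}|c_{B}|<r_{B}^{2}|c_{B}|=r_{B}(r_{B}|c_{B}|)\le ar_{B}$; if $|c_{B}|<1$ then $t^{2}|c_{B}|<t^{2}<r_{B}^{2}\le ar_{B}$).

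Now fix $x\in C_{k}(B)$; since $k\ge1$, $C_{k}(B)$ is the annulus $B(c_{B},2^{k+1}r_{B})\setminus B(c_{B},2^{k}r_{B})$, so $|x-c_{B}|\ge2^{k}r_{B}$. I would split according to the size of $t^{2}|c_{B}|$. \emph{Case 1: $t^{2}|c_{B}|<\tfrac14(2^{k}-1)r_{B}$.} For every $y\in B$ the triangle inequality gives $|e^{-t^{2}}y-x|\ge|x-c_{B}|-(1-e^{-t^{2}})|c_{B}|-e^{-t^{2}}|c_{B}-y|>(2^{k}-1)r_{B}-t^{2}|c_{B}|>\tfrac34(2^{k}-1)r_{B}$; using $1-e^{-2t^{2}}\le2t^{2}$ and $(2^{k}-1)^{2}\ge4^{k-1}$ (valid for $k\ge1$) this upgrades to
$$
\tfrac{\alpha}{4e^{2a^{2}}}\,\tfrac{|e^{-t^{2}}y-x|^{2}}{1-e^{-2t^{2}}}\ge\tfrac{9\alpha(2^{k}-1)^{2}}{128\,e^{2a^{2}}}\Big(\tfrac{r_{B}}{t}\Big)^{2}\ge\tfrac{9\alpha}{512\,e^{2a^{2}}}\,4^{k}\Big(\tfrac{r_{B}}{t}\Big)^{2}\ge\tfrac{\alpha}{2^{6}\,e^{2a^{2}}}\,4^{k}\Big(\tfrac{r_{B}}{t}\Big)^{2},
$$
the last step because $\tfrac9{512}>\tfrac1{64}$. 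Combined with the kernel bound this gives $|\tilde K_{t^{2},N,\alpha,j}(x,y)|\lesssim\exp(-\tfrac{\alpha}{2^{6}e^{2a^{2}}}4^{k}(\tfrac{r_{B}}{t})^{2})M_{t^{2}}(x,y)$ for all $x\in C_{k}(B),y\in B$, whence the operator $u\mapsto 1_{C_{k}(B)}Tu$ is pointwise dominated by $\exp(-\tfrac{\alpha}{2^{6}e^{2a^{2}}}4^{k}(\tfrac{r_{B}}{t})^{2})\,e^{t^{2}L}(|u|)$; since $e^{t^{2}L}$ is a contraction on $L^{2}(\gamma)$, taking $L^{2}(\gamma)$-norms finishes this case. \emph{Case 2: $t^{2}|c_{B}|\ge\tfrac14(2^{k}-1)r_{B}$.} Comparing with $t^{2}|c_{B}|<ar_{B}$ forces $2^{k}-1<4a$, so $k$ lies in a finite range depending on $a$ and $4^{k}<(1+4a)^{2}$; moreover $t^{2}|c_{B}|\ge\tfrac14 r_{B}$ together with $r_{B}|c_{B}|\le a$ when $|c_{B}|\ge1$ (resp. $t^{2}>t^{2}|c_{B}|\ge\tfrac14 r_{B}$ and $r_{B}\le a$ when $|c_{B}|<1$) yields $(r_{B}/t)^{2}\le4a$, hence $4^{k}(r_{B}/t)^{2}<4a(1+4a)^{2}$. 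Thus $\exp(-\tfrac{\alpha}{2^{6}e^{2a^{2}}}4^{k}(\tfrac{r_{B}}{t})^{2})$ is bounded below by a positive constant depending only on $a,\alpha$, and the desired estimate follows from the uniform bound $\|1_{C_{k}(B)}T1_{B}\|_{2\to2}\le\|T\|_{2\to2}\lesssim1$.

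The only genuinely delicate point is the numerical bookkeeping in Case~1: the threshold $\tfrac14(2^{k}-1)r_{B}$, together with the elementary inequalities $(2^{k}-1)^{2}\ge4^{k-1}$ and $\tfrac9{512}>\tfrac1{64}$, has to be chosen precisely so that the exponent comes out with the stated constant $2^{6}$ — any cruder splitting of the Gaussian loses a factor and misses it, and working only with $|x-y|\ge(2^{k}-1)r_{B}$ instead of $|e^{-t^{2}}y-x|$ fails to exploit the large factor $\tfrac{\alpha}{4e^{2a^{2}}}$ when $\alpha$ is large. Everything else is soft, resting on Lemma~\ref{lem:est}(iii), the contractivity of $e^{t^{2}L}$ on $L^{2}(\gamma)$, and the uniform $L^{2}(\gamma)$ bound for $T$; all implied constants depend only on $N$, $\alpha$ and $a$, as claimed.
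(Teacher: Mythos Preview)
Your proof is correct. Both your argument and the paper's rest on the kernel bound of Lemma~\ref{lem:est}(iii) followed by domination by the Mehler kernel and contractivity of $e^{t^{2}L}$ on $L^{2}(\gamma)$; the difference lies in how you lower bound $|e^{-t^{2}}y-x|$ for $x\in C_{k}(B)$ and $y\in B$. The paper passes in one stroke from $|e^{-t^{2}}y-x|$ to $|y-x|$ via the Cauchy--Schwarz inequality $|y-x|^{2}\le 2\bigl(|e^{-t^{2}}y-x|^{2}+(1-e^{-t^{2}})^{2}|y|^{2}\bigr)$, absorbing the error term $\exp\bigl(\tfrac{\alpha}{2^{3}e^{2a^{2}}}(t|y|)^{2}\bigr)$ into the implied constant since $t\lesssim m(y)$ forces $t|y|\lesssim 1$; this avoids any case distinction. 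You instead expand around $c_{B}$, isolating the drift term $(1-e^{-t^{2}})|c_{B}|\le t^{2}|c_{B}|$ and splitting into two cases according to its size relative to $(2^{k}-1)r_{B}$; your Case~2 then requires the separate uniform bound $\|T\|_{L^{2}(\gamma)\to L^{2}(\gamma)}\lesssim 1$, which you supply. Your route is a bit longer but has the minor advantage of making all constants fully explicit; the paper's route is shorter and shows that the uniform $L^{2}$ bound for $T$ is not actually needed as an independent ingredient.
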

\begin{proof}
For $t\leq r_{B} \leq am(c_{B})$ and $y \in B$, we have $t \leq a(1+a)m(y)$ by Lemma \ref{lem:mnp1}. 
Given $x\in \R^{n}$, we also have, using Cauchy-Schwarz, $|y-x|^{2} \leq 2(|e^{-t^{2}}y-x|^{2}+(1-e^{-t^{2}})^{2}|y|^{2})$, and thus
\begin{equation*}
\begin{split}
\exp(-\frac{\alpha}{2^{3}e^{2a^{2}}}\frac{|e^{-t^{2}}y-x|^{2}}{t^{2}})&\leq
\exp(-\frac{\alpha}{2^{4}e^{2a^{2}}}\frac{|y-x|^{2}}{t^{2}})\exp(\frac{\alpha}{2^{3}e^{2a^{2}}}(t|y|)^{2}) \\&\lesssim \exp(-\frac{\alpha}{2^{4}e^{2a^{2}}}\frac{|y-x|^{2}}{t^{2}}).
\end{split}
\end{equation*}
Therefore, using Lemma \ref{lem:est}, we have the following estimates.
\begin{equation*}
\begin{split}
\int \limits _{C_{k}(B)} &\left( \int \limits _{B} |\tilde{K}_{t^{2},N,\alpha,j}(x,y)|1_{(0,r_{B})}(t)|u(y)|dy \right)^{2} d\gamma(x)\\
& \lesssim \int \limits _{C_{k}(B)} \left( \int \limits _{B} \exp(-\frac{\alpha}{2^{3}e^{2a^{2}}}\frac{|e^{-t^{2}}y-x|^{2}}{t^{2}})M_{t^{2}}(x,y)1_{(0,r_{B})}(t)|u(y)|dy \right)^{2} d\gamma(x) \\
& \lesssim \exp(-\frac{\alpha}{2^{6}e^{2a^{2}}}4^{k}(\frac{r_{B}}{t})^{2})\|e^{t^{2}L}|u|\|_{2}
\lesssim \exp(-\frac{\alpha}{2^{6}e^{2a^{2}}}4^{k}(\frac{r_{B}}{t})^{2})\|u\|_{2} ^{2}.
\end{split}
\end{equation*}
\end{proof}

We conclude this section with a property of the sets $C_{k}(B)$ in the local region
$N_{\tau}(B):= \{x \in \R^{n} \;;\; |x-c_{B}| \leq \tau m(c_{B})\}$, which will be helpful when off-diagonal estimates fail.

\begin{lemma}\label{lem:region}
Let $a,\tau>0$ and $B=B(c_{B},r_{B}) \in \mathcal{B}_{a}$. There exists $C>0$ such that for all $k \in \Z_{+}$
$$
\gamma(C_{k}(B)\cap N_{\tau}(B)) \leq C 2^{kn}\gamma(B).
$$
\end{lemma}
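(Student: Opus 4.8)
\textbf{Proof plan for Lemma \ref{lem:region}.}
The idea is to estimate the gaussian measure of $C_k(B) \cap N_\tau(B)$ in terms of $\gamma(B)$ by reducing everything to a single controlled dilation of $B$. Fix $B = B(c_B, r_B) \in \mathcal{B}_a$, so $r_B \leq a\,m(c_B)$. First I would handle the case $k=0$, where $C_0(B) = B(c_B, 2r_B)$; here Lemma \ref{lem:mm} with $b=2$ already gives $\gamma(C_0(B)) \leq e^{18a^2}\gamma(B)$, which is more than enough. For $k \geq 1$, the point is that on $N_\tau(B)$ the shell $C_k(B) = B(c_B, 2^{k+1}r_B)\setminus B(c_B,2^k r_B)$ cannot be too large: either $2^{k}r_B > \tau m(c_B)$, in which case $C_k(B) \cap N_\tau(B) = \emptyset$ and there is nothing to prove, or $2^{k+1}r_B \leq 2\tau m(c_B)$, i.e. $B(c_B, 2^{k+1}r_B)$ is itself an admissible ball at scale $2\tau a$ (more precisely $2^{k+1}r_B \leq 2\tau m(c_B)$).

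In the nontrivial case I would apply Lemma \ref{lem:mm} directly to $B = B(c_B,r_B) \in \mathcal{B}_a$ with dilation factor $b = 2^{k+1}$: since $B \in \mathcal{B}_a$ and $2^{k+1} \geq 1$, the lemma gives
$$
\gamma(C_k(B) \cap N_\tau(B)) \leq \gamma(B(c_B, 2^{k+1}r_B)) \leq e^{2a^2(2^{k+2}+1)^2}\gamma(B),
$$
but this bound is exponential in $k$, not polynomial, so it is too weak. The fix is to exploit that we are inside $N_\tau(B)$, so effectively $r := 2^{k+1}r_B \leq C_\tau m(c_B)$ for a constant $C_\tau$ depending only on $\tau$ (whenever the intersection is nonempty). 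Thus $B(c_B, r) \in \mathcal{B}_{C_\tau}$, and we may instead compare $\gamma(B(c_B,r))$ with $\gamma(B(c_B, r_B))$ by noting $r/r_B = 2^{k+1}$ and that \emph{both} radii are comparable to admissible scales at $c_B$. The clean way: write $r_B = 2^{-(k+1)}r$ and apply Lemma \ref{lem:mm} in the form that compares $\gamma(B(c_B, r))$ and $\gamma(B(c_B, 2^{-(k+1)}r))$ — but Lemma \ref{lem:mm} as stated only expands balls. So instead I would use the standard lower bound $\gamma(B(c_B, \rho)) \gtrsim \rho^n e^{-|c_B|^2}$ valid for $\rho \lesssim m(c_B)$ together with the matching upper bound $\gamma(B(c_B,\rho)) \lesssim \rho^n e^{-|c_B|^2}$ on the same range: both hold because on a ball of radius $\lesssim m(c_B) \leq 1/|c_B|$ the gaussian density $\pi^{-n/2}e^{-|x|^2}$ is comparable to $e^{-|c_B|^2}$, up to universal constants. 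Applying these to $\rho = r_B$ and $\rho = 2^{k+1}r_B$ (both $\lesssim m(c_B)$ in the nonempty case) yields
$$
\gamma(C_k(B)\cap N_\tau(B)) \leq \gamma(B(c_B,2^{k+1}r_B)) \lesssim (2^{k+1}r_B)^n e^{-|c_B|^2} \lesssim 2^{kn} r_B^n e^{-|c_B|^2} \lesssim 2^{kn}\gamma(B),
$$
with constants depending only on $n$, $a$, and $\tau$.

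The main obstacle is purely bookkeeping: one must verify that when $C_k(B)\cap N_\tau(B) \neq \emptyset$ the radius $2^{k+1}r_B$ is indeed $\lesssim m(c_B)$ — this is where the hypothesis $x \in N_\tau(B)$, i.e. $|x - c_B| \leq \tau m(c_B)$, combined with $x \notin B(c_B, 2^k r_B)$ forces $2^k r_B < \tau m(c_B)$, hence $2^{k+1}r_B < 2\tau m(c_B)$. Once this is in hand, the comparability of the gaussian density to $e^{-|c_B|^2}$ on balls admissible at a fixed scale (a consequence already implicit in Lemma \ref{lem:mm}, or provable in two lines from $|\,|x|^2 - |c_B|^2\,| = |x-c_B|\,|x+c_B| \lesssim m(c_B)(|c_B| + m(c_B)) \lesssim 1$) finishes the estimate. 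No delicate analysis is required; the content is that admissibility converts the gaussian measure into an essentially Lebesgue measure at the relevant scales, and Lebesgue measure of a shell of outer radius $2^{k+1}r_B$ scales like $2^{kn}r_B^n$.
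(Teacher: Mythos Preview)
Your argument is correct and is essentially the same as the paper's: both reduce to the fact that on $N_\tau(B)$ the gaussian density satisfies $e^{-|x|^2}\sim e^{-|c_B|^2}$ (which you justify via $\big|\,|x|^2-|c_B|^2\,\big|\lesssim 1$), so that $\gamma(B)\sim r_B^n e^{-|c_B|^2}$ and $\gamma(C_k(B)\cap N_\tau(B))\lesssim (2^{k}r_B)^n e^{-|c_B|^2}$. The only organisational difference is that you first pass to the larger set $B(c_B,2^{k+1}r_B)$ and therefore need the emptiness dichotomy $2^k r_B>\tau m(c_B)$ to keep that ball admissible, whereas the paper stays on $C_k(B)\cap N_\tau(B)$ throughout and bounds its Lebesgue measure by $|2^{k+1}B|$ directly, avoiding the case split.
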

\begin{proof}
Let $k \in \Z_{+}$ and $x\in C_{k}(B) \cap N_{\tau}(B)$.
We have $|x-c_{B}| \leq \tau m(c_{B}) \leq \tau(1+\tau)m(x)$, by Lemma \ref{lem:mnp1}.
Therefore
\begin{equation*}
\begin{split}
|x|^{2} \geq |c_{B}|^{2}-2\tau m(c_{B})|c_{B}| \\
|c_{B}|^{2} \geq |x|^{2}-2\tau(1+\tau) m(x)|x|,
\end{split}
\end{equation*}
and thus $e^{-|x|^{2}}\sim e^{-|c_{B}|^{2}}$ for all $x\in C_{k}(B) \cap N_{\tau}(B)$, with implicit constants independent of $k,B$ and $x$.
In particular, for $k=0$, we have 
$$
\gamma(B) \sim e^{-|c_{B}|^{2}}\int \limits _{B}dx \sim r_{B}^{n}e^{-|c_{B}|^{2}}.
$$
For $k \in \Z_{+}$, this gives
$$
\gamma(C_{k}(B)\cap N_{\tau}(B)) \lesssim \int \limits _{2^{k+1}B}e^{-|c_{B}|^{2}}dx
\lesssim (2^{k}r_{B})^{n}e^{-|c_{B}|^{2}} \lesssim 2^{kn} \gamma(B).
$$
\end{proof}

\section{Molecules}
In this section,  we show that, given a $t^{1,2}(\gamma)$ atom $F$ associated with a ball $B=B(c_{B},r_{B})\in \mathcal{B}_{2}$, the function
$$
\int \limits _{0} ^{r_{B}} (t^{2}L)^{N}e^{\frac{t^{2}}{\alpha}L}t\partial_{x_{j}}^{*}F(t,.)\frac{dt}{t}
$$
is a $(2,N,2^{-23}\alpha)$-molecule in the following sense.
\begin{definition}
Let $N \in \N$, $a>0$, and $C>0$. A function $f \in L^{2}(\gamma)$ is called a $(a,N,C)$-molecule if there exist $B=B(c_{B},r_{B})\in \mathcal{B}_{a}$
and $\tilde{f}$ in $L^{2}(\gamma)$ such that the following holds:
\begin{enumerate}[(i)]
\item
$\|1_{C_{k}(B)}f\|_{2} \leq e^{-C4^{k}}\gamma(B)^{-\frac{1}{2}} \quad \forall k \in \Z_{+}$,
\item
$f = L^{N}\tilde{f}$,
\item
$\|1_{C_{k}(B)}\tilde{f}\|_{2} \leq r_{B}^{2N}e^{-C4^{k}}\gamma(B)^{-\frac{1}{2}} \quad \forall k \in \Z_{+}$.
\end{enumerate}
\end{definition}
We then show that there exists $M>0$ depending only on $(a,N,C)$, such that $\|f\|_{h^{1}_{\text{max}}}\leq M$ for all $(a,N,C)$-molecules.
\begin{proposition}
\label{prop:mol}
Let $N \in \N$, $j\in \{1,...,n\}$ and $\alpha>0$. Let $B=B(c_{B},r_{B})\in \mathcal{B}_{2}$ and $F$ be a $t^{1,2}(\gamma)$ atom $F$ associated with $B$. The function
$$
\int \limits _{0} ^{r_{B}} (t^{2}L)^{N}e^{\frac{t^{2}}{\alpha}L}t\partial_{x_{j}}^{*}F(t,.)\frac{dt}{t}
$$
is a $(2,N,2^{-23}\alpha)$-molecule.
\end{proposition}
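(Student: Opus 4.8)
The plan is to verify the three defining properties of a $(2,N,2^{-23}\alpha)$-molecule directly, taking the ball to be $B$ itself and setting
$$
f := \int \limits _{0} ^{r_{B}} (t^{2}L)^{N}e^{\frac{t^{2}}{\alpha}L}t\partial_{x_{j}}^{*}F(t,.)\frac{dt}{t}, \qquad
\tilde{f} := \int \limits _{0} ^{r_{B}} t^{2N}e^{\frac{t^{2}}{\alpha}L}t\partial_{x_{j}}^{*}F(t,.)\frac{dt}{t},
$$
so that property (ii), $f = L^{N}\tilde{f}$, holds by construction. The core of the argument is the $L^2$ bound on annuli $C_{k}(B)$ for both $f$ and $\tilde{f}$; I would treat them uniformly, since $\tilde f$ only differs by the extra factor $r_B^{2N}$ in the target and by replacing $(t^2L)^N$ with $t^{2N}$, which is harmless in the kernel estimate.

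First I would decompose $f$ by inserting $1 = \sum_{\ell \in \Z_+} 1_{C_\ell(B)}(y)$ in the $y$-variable, writing $f = \sum_{\ell} \int_0^{r_B} (t^2L)^N e^{\frac{t^2}{\alpha}L} t\partial_{x_j}^* (1_{C_\ell(B)} F(t,\cdot)) \frac{dt}{t}$. For the diagonal-ish terms I would use the support condition on the atom $F$: since $\mathrm{supp}(F) \subset \{(t,y) : t \le \min(d(y,B^c), m(y))\}$, the function $F(t,\cdot)$ is supported in $B$, so only $\ell = 0$ contributes — this actually collapses the sum over $\ell$ and I only need to split the $x$-variable. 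So the real computation is: for $k \in \Z_+$, estimate $\|1_{C_k(B)} f\|_2$. For $k$ small (say $k \le$ some absolute constant) I would just use the $L^2$-boundedness of the operator $(t^2L)^N e^{\frac{t^2}{\alpha}L} t\partial_{x_j}^*$ on $L^2(\gamma)$ (uniformly for $t \le r_B \le 2m(c_B)$, which follows from the kernel/off-diagonal apparatus or standard functional calculus), together with Minkowski's integral inequality in $t$ and the atom normalization $\int_{\R^n}\int_0^\infty |F(t,y)|^2 \frac{dy\,dt}{t} \le \gamma(B)^{-1}$, to get $\|1_{C_k(B)} f\|_2 \lesssim \gamma(B)^{-1/2}$; absorbing the absolute constant $k$-range into the implied constant gives the claimed $e^{-2^{-23}\alpha 4^k}$ bound there. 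For $k$ large, I would apply Lemma \ref{lem:od} with $u = 1_B F(t,\cdot)$: since $B \in \mathcal{B}_2$ and $\alpha > 4e^{8} = 4e^{2a^2}$ with $a=2$, Lemma \ref{lem:od} gives
$$
\|1_{C_k(B)} (t^{2N+1}L^N e^{\frac{t^2}{\alpha}L}\partial_{x_j}^*) 1_B F(t,\cdot)\|_2 \lesssim \exp\Big(-\tfrac{\alpha}{2^6 e^{8}} 4^k (\tfrac{r_B}{t})^2\Big)\|F(t,\cdot)\|_2
$$
for $t \le r_B$. Dividing by $t$, integrating $\int_0^{r_B} \cdots \frac{dt}{t}$, and using $(r_B/t)^2 \ge 1$ on this range, I would pull out $\exp(-\tfrac{\alpha}{2^7 e^{8}} 4^k)$ (keeping half the exponent to retain a convergent $t$-integral: $\int_0^{r_B} \exp(-c 4^k (r_B/t)^2)\frac{dt}{t} \lesssim 1$ uniformly in $k$, after Cauchy–Schwarz against $\|F(t,\cdot)\|_2$ and using the atom normalization). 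The numerology $2^7 e^{8}$ versus $2^{23}$ leaves ample room, so the final bound $e^{-2^{-23}\alpha 4^k}\gamma(B)^{-1/2}$ holds; this gives (i). Property (iii) is identical, with $(t^2L)^N$ replaced by $t^{2N}$ in the operator — the same off-diagonal estimate applies (it is stated for $t^{2N+1}L^N e^{\frac{t^2}{\alpha}L}\partial_{x_j}^*$, and the variant $t^{2N+1}e^{\frac{t^2}{\alpha}L}\partial_{x_j}^*$ has even better kernel bounds, being dominated by the Mehler kernel via Lemma \ref{lem:est}(iii) without the $L^N$ producing polynomial factors) — and one picks up exactly the extra $r_B^{2N}$ because $t^{2N} \le r_B^{2N}$ on $(0,r_B)$.

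The main obstacle is bookkeeping the constants so that the off-diagonal decay rate $\tfrac{\alpha}{2^6 e^{8}}$ from Lemma \ref{lem:od}, after the Cauchy–Schwarz split that sacrifices part of the Gaussian to make the $t$-integral converge and after absorbing the $(r_B/t)^2 \ge 1$ reduction, still dominates $2^{-23}\alpha$; this is purely arithmetic but must be done carefully. A secondary point to handle cleanly is the uniform $L^2$-boundedness of $(t^2L)^N e^{\frac{t^2}{\alpha}L}t\partial_{x_j}^*$ for the small-$k$ regime — this follows from spectral calculus on $L^2(\gamma)$ since $\lambda^N e^{-\lambda t^2/\alpha}$ is bounded and $\|t\partial_{x_j}^* v\|_2 \lesssim \|t\sqrt{L}\, v\|_2 \cdot$(harmless) — but one should note it so the argument is self-contained. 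Everything else (Minkowski, the atom normalization, the annular decomposition) is routine.
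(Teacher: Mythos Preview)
Your treatment of property~(ii), of $\tilde f$, and of the off-diagonal case $k\ge 1$ via Lemma~\ref{lem:od} is correct and is exactly what the paper does. The gap is the $k=0$ step. Minkowski's inequality gives
\[
\|f\|_{2}\ \le\ \int_{0}^{r_{B}}\bigl\|(t^{2}L)^{N}e^{\frac{t^{2}}{\alpha}L}t\partial_{x_{j}}^{*}F(t,\cdot)\bigr\|_{2}\,\frac{dt}{t}
\ \lesssim\ \int_{0}^{r_{B}}\|F(t,\cdot)\|_{2}\,\frac{dt}{t},
\]
but the atom normalisation only controls $\int_{0}^{r_{B}}\|F(t,\cdot)\|_{2}^{2}\,\frac{dt}{t}$, and this does \emph{not} bound the $L^{1}(\frac{dt}{t})$-norm of $\|F(t,\cdot)\|_{2}$: take $F(t,y)=\phi(y)/\log(1/t)$ with $\phi$ a bump supported well inside $B$, and one gets a legitimate atom with $\int_{0}^{r_{B}}\|F(t,\cdot)\|_{2}\frac{dt}{t}=\infty$. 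The uniform $L^{2}$-bound on $(t^{2}L)^{N}e^{\frac{t^{2}}{\alpha}L}t\partial_{x_{j}}^{*}$ is of no help here, since its operator norm is $\sim\sup_{\lambda\ge 0}(t^{2}\lambda)^{N+1/2}e^{-t^{2}\lambda/\alpha}$, a constant independent of $t$; there is nothing to make the $\frac{dt}{t}$ integral converge at $t=0$. (For $k\ge 1$ the off-diagonal factor $\exp(-c4^{k}(r_{B}/t)^{2})$ supplies that convergence, which is why Minkowski \emph{does} work there.)

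The fix is to use duality and a square-function bound on the adjoint rather than a sup bound. Pair with $g\in L^{2}(\gamma)$, move the operator across, and Cauchy--Schwarz in $t$:
\[
|\langle f,g\rangle|\ \le\ \Bigl(\int_{0}^{r_{B}}\|F(t,\cdot)\|_{2}^{2}\,\frac{dt}{t}\Bigr)^{1/2}
\Bigl(\int_{0}^{\infty}\bigl\|t\partial_{x_{j}}e^{\frac{t^{2}}{\alpha}L}(t^{2}L)^{N}g\bigr\|_{2}^{2}\,\frac{dt}{t}\Bigr)^{1/2}.
\]
By chaos expansion the second factor is $\bigl(\sum_{\beta}|c_{\beta}|^{2}\int_{0}^{\infty}(t^{2}|\beta|)^{2N+1}e^{-\frac{2t^{2}}{\alpha}|\beta|}\frac{dt}{t}\bigr)^{1/2}\lesssim\|g\|_{2}$; the point is that the operator's vanishing at $t=0$ appears as the factor $(t^{2}|\beta|)^{2N+1}$, which is exactly what the Minkowski route throws away. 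This is the argument the paper gives for $k=0$, and the same trick (with $(t^{2}|\beta|)^{1}$ in place of $(t^{2}|\beta|)^{2N+1}$) handles $\tilde f$ and produces the required $r_{B}^{2N}$.
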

\begin{proof}
Let us treat the case $k=0$ first. Let $g = \sum \limits _{\beta \in \Z^{n}_{+}} c_{\beta}H_{\beta} \in L^{2}(\R^{n},\gamma)$ be such that
$ \sum \limits _{\beta \in \Z^{n}_{+}} |c_{\beta}|^{2} \leq 1$. We need to estimate
$$
\int \limits _{0} ^{r_{B}} \int \limits _{\R^{n}} |(t^{2}L)^{N}e^{\frac{t^{2}}{\alpha}L}t\partial_{x_{j}}^{*}F(t,x)g(x)| d\gamma(x)\frac{dt}{t}.
$$
By duality, and the $L^2$ boundedness of the Riesz transforms, we have that
\begin{equation*}
\begin{split}
\int \limits _{0} ^{r_{B}}  \int \limits _{\R^{n}} |(t^{2}L)^{N}e^{\frac{t^{2}}{\alpha}L}t\partial_{x_{j}}^{*}F(t,x)g(x)| d\gamma(x)\frac{dt}{t}
& \lesssim (\int \limits _{0} ^{r_{B}} \int \limits _{\R^{n}} |F(t,x)|^{2} d\gamma(x)\frac{dt}{t})^{\frac{1}{2}}
(\int \limits _{0} ^{r_{B}} \sum \limits _{\beta \in \Z^{n}_{+}} |(t^{2}|\beta|)^{N+\frac{1}{2}}e^{-\frac{t^{2}}{\alpha}|\beta|}c_{\beta}|^{2} \frac{dt}{t})^{\frac{1}{2}} \\
&\lesssim \gamma(B)^{-\frac{1}{2}} (\sum \limits _{\beta \in \Z^{n}_{+}} |c_{\beta}|^{2} \int \limits _{0} ^{\infty}
(t^{2}|\beta|)^{2N+1}e^{-\frac{2t^{2}}{\alpha}|\beta|} \frac{dt}{t})^{\frac{1}{2}} \lesssim \gamma(B)^{-\frac{1}{2}}.
\end{split}
\end{equation*}
Moreover $\int \limits _{0} ^{r_{B}} (t^{2}L)^{N}e^{\frac{t^{2}}{\alpha}L}t\partial_{x_{j}}^{*}F(t,.)\frac{dt}{t} = L^{N}\tilde{f}$ for
$\tilde{f}:=\int \limits _{0} ^{r_{B}} t^{2N+1}e^{\frac{t^{2}}{\alpha}L}\partial_{x_{j}}^{*}F(t,.)\frac{dt}{t}$.
The same argument thus gives
$$
\|\tilde{f}\|_{2} \lesssim r_{B}^{2N}\gamma(B)^{-\frac{1}{2}} (\int \limits _{0} ^{\infty}
t^{2}|\beta|e^{-\frac{2t^{2}}{\alpha}|\beta|} \frac{dt}{t})^{\frac{1}{2}} \lesssim r_{B}^{2N}\gamma(B)^{-\frac{1}{2}}.
$$
Now let $k \in \Z_{+}$ be such that $k \neq 0$. By Lemma \ref{lem:od}, we have the following.
\begin{equation*}
\begin{split}
\|1_{C_{k}(B)}\int \limits _{0} ^{r_{B}} (t^{2}L)^{N}e^{\frac{t^{2}}{\alpha}L}t\partial_{x_{j}}^{*}F(t,.)\frac{dt}{t}\|_{2}
&\lesssim \int \limits _{0} ^{r_{B}} \exp(-\frac{\alpha}{2^{6}e^{8}}4^{k}(\frac{r_{B}}{t})^{2})\|F(t,.)\|_{2} \frac{dt}{t}\\
&\lesssim  \exp(-\frac{\alpha}{2^{23}}4^{k}) 
(\int \limits _{0} ^{1} \exp(-\frac{\alpha}{2^{22}}(\frac{1}{t})^{2})\frac{dt}{t})^{\frac{1}{2}} (\int \limits _{0} ^{r_{B}} \|F(t,.)\|_{2} ^{2} \frac{dt}{t})^{\frac{1}{2}}\\
& \lesssim \exp(-\frac{\alpha}{2^{23}}4^{k}) \gamma(B)^{-\frac{1}{2}}.
\end{split}
\end{equation*}
Since
$$
\|1_{C_{k}(B)}\tilde{f}\|_{2} \leq r_{B}^{2N} \int \limits _{0} ^{r_{B}} \|1_{C_{k}(B)}1_{(0,r_{B})}(t)e^{\frac{t^{2}}{\alpha}L}t\partial_{x_{j}}^{*}F(t,.)\|_{2}
\frac{dt}{t},
$$
the proof is concluded as above, using Lemma \ref{lem:od} with $N$ replaced by $0$.
\end{proof}
\begin{theorem}
\label{thm:mol}
Let $a>0$, and $f$ a $(2,N,C)$-molecule with $N>\frac{n}{4}$ and $C>2^{11}$.
Then $f \in h^{1}_{\text{max},a}$ and $\|f\|_{h^{1}_{\text{max},a}} \leq M$ for some $M$ independent of $f$.
\end{theorem}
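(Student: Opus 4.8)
The plan is to bound $\|f\|_{h^{1}_{\text{max},a}}=\|T^*_af\|_{L^{1}(\gamma)}$ directly, via the by-now standard splitting of the Mehler kernel into a local and a global part. Let $B=B(c_B,r_B)\in\mathcal{B}_{2}$ and $\tilde f\in L^{2}(\gamma)$ be as in the molecule definition, set $\tau:=\tfrac{(1+a)(1+2a)}{2}$, and write $T^*_af\le T^*_{\text{loc}}f+T^*_{\text{glob}}f$, where $T^*_{\text{glob}}f(x):=\sup_{(y,t)\in\Gamma^a_x(\gamma)}\int_{\R^{n}}M_{t^{2}}(y,z)1_{N_\tau^{c}}(y,z)|f(z)|\,dz$ and $T^*_{\text{loc}}f$ uses $1_{N_\tau}$ instead. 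A routine density argument (using $f\in L^{2}(\gamma)$) reduces matters to the a priori estimate. For the global part, Proposition~\ref{prop:glob}(i) gives $\|T^*_{\text{glob}}f\|_{L^{1}(\gamma)}\lesssim\|f\|_{L^{1}(\gamma)}$; decomposing $\R^{n}=\bigcup_{k\ge0}C_k(B)$, applying Cauchy--Schwarz on each $C_k(B)$, condition~(i) of the molecule definition, and Lemma~\ref{lem:mm} (with $a=2$, $b=2^{k+1}$, which gives $\gamma(C_k(B))\le e^{c_{0}4^{k}}\gamma(B)$ for an explicit $c_{0}$ with $c_{0}/2<2^{11}$), one gets $\|f\|_{L^{1}(\gamma)}\le\sum_{k\ge0}\gamma(C_k(B))^{1/2}\|1_{C_k(B)}f\|_{2}\le\sum_{k\ge0}e^{(c_{0}/2-C)4^{k}}\lesssim1$ since $C>2^{11}$.

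For the local part I would write $\int_{\R^{n}}T^*_{\text{loc}}f\,d\gamma=\int_{4B}T^*_{\text{loc}}f\,d\gamma+\sum_{k\ge2}\int_{C_k(B)}T^*_{\text{loc}}f\,d\gamma$. On $4B$ one combines Cauchy--Schwarz, the $L^{2}(\gamma)$-boundedness of the (full) non-tangential maximal function from Proposition~\ref{prop:glob}(ii), the bound $\|f\|_{2}\le\sum_k e^{-C4^{k}}\gamma(B)^{-1/2}\lesssim\gamma(B)^{-1/2}$ from condition~(i), and $\gamma(4B)\lesssim\gamma(B)$ from Lemma~\ref{lem:mm}, to get $\int_{4B}T^*_{\text{loc}}f\,d\gamma\le\gamma(4B)^{1/2}\|T^*_{\text{loc}}f\|_{2}\lesssim1$. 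For the annuli $C_k(B)$ with $k\ge2$ it suffices to prove $\int_{C_k(B)}T^*_{\text{loc}}f\,d\gamma\lesssim2^{-\delta k}$ for some $\delta>0$. First I would observe that when $x\in C_k(B)$ lies outside a fixed large dilate $N_{\tau'}(B)$ of the local region, the constraint $1_{N_\tau}(y,z)$ forces $T^*_{\text{loc}}f(x)$ to see only the restriction of $f$ to annuli $C_j(B)$ with $j$ large, so condition~(i) gives (more than) enough decay there; on the complement $C_k(B)\cap N_{\tau'}(B)$ Lemma~\ref{lem:region} gives $\gamma(C_k(B)\cap N_{\tau'}(B))\lesssim2^{kn}\gamma(B)$, so it remains to establish a pointwise bound $T^*_{\text{loc}}f(x)\lesssim2^{-(2N-n/2+\delta)k}\gamma(B)^{-1}$ there.

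For this last bound I would fix $x\in C_k(B)\cap N_{\tau'}(B)$ and $(y,t)\in\Gamma^a_x(\gamma)$ --- so $|y-x|<t\le am(x)$ and, by Lemma~\ref{lem:mnp1}, $t\lesssim m(y)$ --- and use the cancellation $f=L^{N}\tilde f$: $e^{t^{2}L}f(y)=t^{-2N}\int_{\R^{n}}\Psi_{t^{2},N}(y,z)\tilde f(z)\,dz$, where $\Psi_{t^{2},N}$ is the kernel of $(t^{2}L)^{N}e^{t^{2}L}$, for which Lemma~\ref{lem:K} together with $\sup_{w>0}w^{m}e^{-cw^{2}}<\infty$ yields $|\Psi_{t^{2},N}(y,z)|\lesssim t^{-n}\exp\bigl(-c|e^{-t^{2}}y-z|^{2}/(1-e^{-2t^{2}})\bigr)$, hence $\lesssim t^{-n}\exp(-c'|y-z|^{2}/t^{2})$ once $|y-z|\gtrsim t$. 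Decomposing $\tilde f=\sum_j1_{C_j(B)}\tilde f$ and estimating each term by Cauchy--Schwarz in $L^{2}(\gamma)$ (which costs $\lesssim t^{-n/2}$ from integrating the Gaussian kernel), for $j$ with $C_j(B)$ inside the local region one pairs the Gaussian gain $\exp(-c'4^{\max(j,k)}r_B^{2}/t^{2})$ with $\sup_{t\le am(x)}t^{-2N-n/2}\exp(-c'4^{k}r_B^{2}/t^{2})\lesssim r_B^{-2N-n/2}2^{-k(2N+n/2)}$ and the bound $\|1_{C_j(B)}\tilde f\|_{2}\le r_B^{2N}\gamma(B)^{-1/2}$ from condition~(iii); for $j$ with $C_j(B)$ outside the local region the factor $e^{-C4^{j}}$ in condition~(iii) beats the $e^{|z|^{2}}$ growth there (again using $C>2^{11}$). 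Summing over $j$, the $e^{|c_B|^{2}}$-type factors cancel against $\gamma(B)\sim e^{-|c_B|^{2}}r_B^{n}$, leaving $T^*_{\text{loc}}f(x)\lesssim2^{-k(2N+n/2)}r_B^{-n}$ on $C_k(B)\cap N_{\tau'}(B)$, whence $\int_{C_k(B)}T^*_{\text{loc}}f\,d\gamma\lesssim2^{kn}\gamma(B)\cdot2^{-k(2N+n/2)}r_B^{-n}\cdot(\ldots)\lesssim2^{-k(2N-n/2)}$, which is summable exactly because $N>n/4$. Together with the $4B$ and global estimates this gives $\|T^*_af\|_{L^{1}(\gamma)}\lesssim1$.

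The hard part will be the core pointwise estimate of the third paragraph: the local maximal function on the far annuli $C_k(B)$ when $r_B\ll m(c_B)$, so that the truncated cone $\Gamma^a_x(\gamma)$ (of height $\sim m(x)\sim m(c_B)$) reaches well past $B$. In the regime $t\gtrsim2^{k}r_B$ --- and in particular around $t\sim2^{k}r_B\ll m(c_B)$ --- the bare Mehler-kernel Gaussian decay gives no gain in $k$, so one genuinely needs the cancellation $f=L^{N}\tilde f$; the delicate point is bookkeeping which of the competing inputs (the Gaussian kernel decay, the prefactor $t^{-2N}$ balanced against $\|\tilde f\|\lesssim r_B^{2N}$, the molecule decay on the far annuli, and the measure estimate $\gamma(C_k(B)\cap N_{\tau'}(B))\lesssim2^{kn}\gamma(B)$ of Lemma~\ref{lem:region}) supplies the decay in each range of $t$ and $j$, so that everything sums --- with $N>n/4$ giving exactly the margin needed in the critical regime and $C>2^{11}$ absorbing the measure growth in the global and far-annulus estimates. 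By contrast the global term and the near-diagonal piece on $4B$ are soft consequences of Proposition~\ref{prop:glob} and Lemma~\ref{lem:mm}.
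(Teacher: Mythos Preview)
Your proposal has the right ingredients but contains a genuine logical gap in how they are assembled. You begin by writing $T^*_a f \le T^*_{\text{loc}} f + T^*_{\text{glob}} f$, where both pieces are defined with $|f|$ inside the kernel integral. This is fine for the global piece and for the $4B$ piece. But on $C_k(B)\cap N_{\tau'}(B)$ you then claim a pointwise bound on $T^*_{\text{loc}} f(x)$ obtained by writing $e^{t^2L}f(y)=t^{-2N}\int\Psi_{t^2,N}(y,z)\tilde f(z)\,dz$. That identity controls $|e^{t^2L}f(y)|$, hence $T^*_a f(x)$, \emph{not} $T^*_{\text{loc}} f(x)=\sup_{(y,t)}\int M_{t^2}(y,z)1_{N_\tau}(y,z)|f(z)|\,dz$. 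The latter depends only on $|f|$ and cannot see the cancellation $f=L^N\tilde f$; it can be strictly larger than $T^*_a f(x)$. So as written the argument on the far annuli does not bound the quantity you set out to bound. A related weakness is your treatment of $C_k(B)\setminus N_{\tau'}(B)$: the claim that $1_{N_\tau}(y,z)$ forces $z$ into ``annuli $C_j(B)$ with $j$ large'' only gives $j$ bounded below by a fixed constant (depending on $\tau'$), not $j\gtrsim k$, so condition~(i) alone does not produce decay summable in $k$ there.

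The paper resolves exactly this tension by organising the decomposition differently. Instead of splitting the Mehler kernel first, it splits the \emph{time} variable: the contribution with $s\le r_B/\sqrt\alpha$ (the term $I$) is handled via the Mehler kernel's Gaussian decay at scale $r_B$ --- here one may put $|f|$ inside, do the local/global split, and use Lemma~\ref{lem:mm} (this is where the rescaling $s^2=t^2/\alpha$ with $\alpha$ large is needed, to make the Gaussian constant beat the exponential doubling bound). The contribution with $s>r_B/\sqrt\alpha$ is then split again according to whether $m(x)<2^k r_B/C_a$ (term $I'_{k,l}$) or $m(x)\ge 2^k r_B/C_a$ (term $I''_{k,l}$): in the first case one shows geometrically that $(y,w)\in N_\tau^c$ so the global bound of Proposition~\ref{prop:glob} applies; only in the second case does one invoke $f=L^N\tilde f$, and there $t^{-2N}\le r_B^{-2N}/\alpha^{-N}$ is harmless. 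Crucially, at the moment cancellation is used the paper is estimating $|e^{s^2L}f(y)|$ itself, not a one-sided local truncation of it --- which is exactly the reorganisation your sketch needs.
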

\begin{proof}
Let $B=B(c_{B},r_{B}) \in \mathcal{B}_{2}$ be the ball associated with $f$.
Pick $\alpha>2^{31}$, and let $C_{a}:=(4+4a)\tau+2a$ where $\tau:=\frac{(1+a)(1+2a)}{2}$ as in Proposition \ref{prop:glob}.
We use the following decomposition:
$$
\|f\|_{h^{1}_{\text{max},a}} \leq I + \sum \limits _{k=0} ^{\infty} \sum \limits _{l=0} ^{\infty} I'_{k,l} + \sum \limits _{k=0} ^{\infty} \sum \limits _{l=0} ^{\infty} I''_{k,l},  $$
where
\begin{equation*}
\begin{split}
I&:= \int \limits _{\R^{n}} \sup\{ |e^{s^{2}L}f(y)| \;;\; (y,s) \in \Gamma_{x} ^{a}(\gamma), 
s\leq \frac{r_{B}}{\sqrt{\alpha}}\}d\gamma(x),\\
I'_{k,l}&:= \int \limits _{C_{k}(B)} \sup\{  |e^{s^{2}L}(1_{C_{l}(B)}f)(y)| \;;\; (y,s) \in \Gamma_{x} ^{a}(\gamma), 
s\geq \frac{r_{B}}{\sqrt{\alpha}}\}1_{(0,\frac{2^{k}r_{B}}{C_{a}})}(m(x))d\gamma(x),\\
I''_{k,l}&:= \int \limits _{C_{k}(B)} \sup\{ |L^{N}e^{s^{2}L}(1_{C_{l}(B)}\tilde{f})(y)| \;;\; (y,s) \in \Gamma_{x} ^{a}(\gamma), 
s\geq \frac{r_{B}}{\sqrt{\alpha}}\}1_{[\frac{2^{k}r_{B}}{C_{a}},1]}(m(x))d\gamma(x).
\end{split}
\end{equation*}
{\em Estimating $I$:}
Decomposing into a local and global part and using Proposition \ref{prop:glob}, we have that
$$
I \lesssim \|f\|_{1} + \sum \limits _{k=0} ^{\infty} \sum \limits _{l=0} ^{\infty} I^{loc}_{k,l},
$$
where
$$
I ^{loc} _{k,l}:= \int \limits _{C_{k}(B)} \sup\{ \int \limits _{C_{l}(B)} M_{s^{2}}(z,w)1_{N_{\tau}}(z,w)|f(w)|dw \;;\; (z,s) \in \Gamma_{x} ^{a}(\gamma), s\leq \frac{r_{B}}{\sqrt{\alpha}}\}d\gamma(x).
$$
By Lemma \ref{lem:mm} we also have that
$$
\|f\|_{1} \leq \sum \limits _{k=0} ^{\infty} \sqrt{\gamma(2^{k+1}B)} \|1_{C_{k}(B)}f\|_{2}
\leq \sum \limits _{k=0} ^{\infty} e^{8(2^{k+2}+1)^{2}}e^{-C4^{k}} \lesssim 1,
$$
since $C>2^{9}$.\\
{\em Estimating $I_{k,l}^{loc}$ for $k< l+2$:}\\
By Lemma \ref{lem:mm} and Proposition \ref{prop:glob} we have that
$$
I_{k,l}^{loc} \leq \sqrt{\gamma(2^{k+1}B)} \|x\mapsto \sup\{e^{s^{2}L}|1_{C_{l}(B)}f|(y) \;;\; (y,s) \in \Gamma_{x}^{a}\}\|_{2}
\lesssim e^{2^{9}.4^{k}}\sqrt{\gamma(B)}\|1_{C_{l}(B)}f\|_{2} \leq e^{2^{9}.4^{k}} e^{-C.4^{l}},
$$
and thus:
$$
\sum \limits _{l=0} ^{\infty} \sum \limits _{k=0} ^{l+1} I^{loc}_{k,l} \leq \sum \limits _{l=0} ^{\infty} (l+2)e^{-(C-2^{11})4^{l}} \lesssim 1.
$$
{\em Estimating $I_{k,l}^{loc}$ for $k\geq l+2$:}\\
We use Lemma \ref{lem:est} as follows:
\begin{equation*}
\begin{split}
I_{k,l} ^{loc} &=  \int \limits _{C_{k}(B)} \sup\{ \int \limits _{C_{l}(B)} M_{\frac{t^{2}}{\alpha}}(z,w)1_{N_{\tau}}(z,w)|f(w)|dw \;;\; (z,t) \in 
\Gamma_{x} ^{(\frac{1}{\sqrt{\alpha}},a\sqrt{\alpha})}(\gamma), t\leq r_{B}\}d\gamma(x)\\
 &\lesssim  \int \limits _{C_{k}(B)} \sup\{ \int \limits _{C_{l}(B)} M_{t^{2}}(z,w)\exp(-\frac{\alpha}{2^{17}}\frac{|e^{-t^{2}}z-w|^{2}}{1-e^{-2t^{2}}})1_{N_{\tau}}(z,w)|f(w)|dw \;;\; (z,t) \in 
\Gamma_{x} ^{(\frac{1}{\sqrt{\alpha}},a\sqrt{\alpha})}(\gamma), t\leq r_{B}\}d\gamma(x),
\end{split}
\end{equation*}
where we have used Lemma \ref{lem:mnp1} to see that
\begin{equation*}
\begin{split}
|z-x| \leq am(x) &\implies m(x) \leq (1+a)m(z),\\
|z-w| \leq \tau m(z) &\implies m(z) \leq (1+\tau)m(w),\\
t \leq a\sqrt{\alpha}m(x) &\implies t \leq a\sqrt{\alpha}(1+a)(1+\tau)m(w).
\end{split}
\end{equation*}
Now, for $x \in C_{k}(B)$, $w \in C_{l}(B)$, $t \leq \min(r_{B},  a\sqrt{\alpha}(1+a)m(z))$, and $z \in B(x,\frac{t}{\sqrt{\alpha}})$, we have
$$
|e^{-t^{2}}z-w| \geq |x-w|-|x-z|-(1-e^{-t^{2}})|z|
\geq (2^{k-1}-\frac{2}{\sqrt{\alpha}}-2a\sqrt{\alpha}(1+a))r_{B}.
$$
Let $M_{a,\alpha} \in \N$ be such that $\frac{2}{\sqrt{\alpha}}+2a\sqrt{\alpha}(1+a)\leq 2^{M_{a,\alpha}}$.
Then, for $k \geq \max(l,M_{a,\alpha})+2$ we have the following.
\begin{equation*}
\begin{split}
I_{k,l}^{loc} &\lesssim \exp(-\frac{\alpha}{2^{18}}(2^{k-2})^{2})
\int \limits _{C_{k}(B)} \sup\{ e^{t^{2}L}|1_{C_{l}(B)}f|(z)\;;\; (z,t) \in 
\Gamma_{x} ^{(\frac{1}{\sqrt{\alpha}},\sqrt{\alpha}a)}(\gamma)\}d\gamma(x)\\
&\lesssim \exp(-\frac{\alpha}{2^{22}}4^{k}) \sqrt{\gamma(2^{k+1}B)}\|1_{C_{l}(B)}f\|_{2}
\leq \exp(-\frac{\alpha}{2^{22}}4^{k})\exp(2^{9}.4^{k})\exp(-C4^{l}),
\end{split}
\end{equation*}
where we have used Proposition \ref{prop:glob} and Lemma \ref{lem:mm}. Noticing that
$$
\sum \limits _{k=0} ^{M_{a,\alpha}+2} \sum \limits _{l=0} ^{M_{a,\alpha}} I_{k,l} ^{loc}
\lesssim \sum \limits _{k=0} ^{M_{a,\alpha}+2} \sum \limits _{l=0} ^{M_{a,\alpha}} \sqrt{\gamma(2^{k+1}B)}\|f\|_{2}
\leq \sum \limits _{k=0} ^{M_{a,\alpha}+2} \sum \limits _{l=0} ^{M_{a,\alpha}} \exp(2^{9}.4^{k}) \lesssim 1,
$$
and using the fact that $\alpha>2^{31}$, 
we get that $\sum \limits _{l=0} ^{\infty}\sum \limits _{k=l+2} ^{\infty} I_{k,l}^{loc} \lesssim 1$ and thus that $I \lesssim 1$.\\
{\em Estimating $I'_{k,l}$ for $k < l+2$:}\\
Reasoning as above, using Proposition \ref{prop:glob} and Lemma \ref{lem:mm}, we have that
$$
I'_{k,l} \lesssim \exp(2^{9}.4^{k}) \sqrt{\gamma(B)} \|1_{C_{l}(B)}f\|_{2} \lesssim \exp(2^{9}.4^{k}-C4^{l}),
$$
and thus $$
\sum \limits _{l=0} ^{\infty} \sum \limits _{k=0} ^{l+1} I'_{k,l} \leq \sum \limits _{l=0} ^{\infty} (l+2)\exp(-(C-2^{13})4^{l}) \lesssim 1.
$$
{\em Estimating $I'_{k,l}$ for $k \geq l+2$:}\\
Given $x \in C_{k}(B)$ such that $m(x) \leq \frac{2^{k}r_{B}}{C_{a}}$, $s\leq am(x)$,
 $y \in B(x,s)$, and $w \in C_{l}(B)$, we have, using Lemma \ref{lem:mnp1}:
$$
|y-w| \geq |x-w|-|x-y| \geq 2^{k-1}r_{B}(2-2^{l+2-k})-am(x) \geq (\frac{C_{a}}{2}-a)m(x) \geq \frac{1}{2+2a}(\frac{C_{a}}{2}-a)m(y)= \tau m(y).
$$
By Proposition \ref{prop:glob}, we thus have
$$
\sum \limits _{l=0} ^{\infty} \sum \limits _{k=l+2} ^{\infty} I'_{k,l}\leq \sum \limits _{l=0} ^{\infty} \|T^{*}_{glob,a,1}|1_{C_{l}(B)}f|\|_{1}
\lesssim \|f\|_{1} \lesssim 1.
$$
{\em Estimating $I''_{k,l}$:}\\
For $x \in \R^{n}$, $t\leq a\sqrt{\alpha}m(x)$, $y \in B(x,\frac{t}{\sqrt{\alpha}})$,
we have $t \lesssim m(y)$ by Lemma \ref{lem:mnp1} and thus 
$$
|L^{N}e^{\frac{t^{2}}{\alpha}L}(1_{C_{l}(B)}\tilde{f})(y)| \lesssim t^{-2N} \int \limits _{C_{l}(B)}
|K_{t^{2},N,\alpha}(y,w)||\tilde{f}(w)|dw \lesssim t^{-2N} \int \limits _{C_{l}(B)} M_{t^{2}}(y,w)|\tilde{f}(w)|dw,
$$
by Lemma \ref{lem:est}.
Therefore
$$
I''_{k,l} \lesssim \int \limits _{C_{k}(B)} \sup \{t^{-2N}e^{t^{2}L}|1_{C_{l}(B)}\tilde{f}|(z) \;;\; (z,t) \in 
\Gamma_{x}^{(\frac{1}{\sqrt{\alpha}},a\sqrt{\alpha})}(\gamma), t \geq r_{B}\}1_{[\frac{2^{k}r_{B}}{C_{a}},1]}(m(x))d\gamma(x)
\lesssim r_{B}^{-2N}J_{k,l}^{glob}+J_{k,l}^{loc},
$$
where
\begin{equation*}
\begin{split}
J_{k,l}^{glob} &:= \int \limits _{C_{k}(B)} \sup \{\int \limits _{C_{l}(B)} M_{t^{2}}(z,w)1_{N_{\tau} ^{c}}(z,w) |\tilde{f}|(w)dw \;;\; (z,t) \in 
\Gamma_{x}^{(\frac{1}{\sqrt{\alpha}},a\sqrt{\alpha})}(\gamma)\}d\gamma(x),\\
J_{k,l}^{loc} &:= \int \limits _{C_{k}(B)} \sup \{t^{-2N}\int \limits _{C_{l}(B)} M_{t^{2}}(z,w)1_{N_{\tau}}(z,w) |\tilde{f}|(w)dw \;;\; (z,t) \in 
\Gamma_{x}^{(\frac{1}{\sqrt{\alpha}},a\sqrt{\alpha})}(\gamma), t \geq r_{B}\}1_{[\frac{2^{k}r_{B}}{C_{a}},1]}(m(x))d\gamma(x),
\end{split}
\end{equation*}
and $\tau$ is defined as in Proposition \ref{prop:glob} for the parameters $(\frac{1}{\sqrt{\alpha}},a\sqrt{\alpha})$.
Proposition \ref{prop:glob} then gives that
$$
\sum \limits _{l=0} ^{\infty} \sum \limits _{k=0} ^{\infty}J_{k,l} ^{glob} \lesssim \sum \limits _{l=0} ^{\infty} \|1_{C_{l}(B)}\tilde{f}\|_{1} \lesssim r_{B}^{2N}.
$$
For $x \in C_{k}(B)$ and $m(x) \geq \frac{2^{k}r_{B}}{C_{a}}$ we have
$$
|x-c_{B}|\leq 2^{k+1}r_{B} \leq 2C_{a}m(x) \leq 2C_{a}(1+2C_{a})m(c_{B})=:\tau'm(c_{B}).
$$
Therefore
$$
J_{k,l}^{loc} \leq \int \limits _{C_{k}(B)\cap N_{\tau'}(B)} \sup \{t^{-2N}\int \limits _{C_{l}(B)} M_{t^{2}}(z,w)1_{N_{\tau}}(z,w) |\tilde{f}|(w)dw \;;\; (z,t) \in 
\Gamma_{x}^{(\frac{1}{\sqrt{\alpha}},a\sqrt{\alpha})}(\gamma), t \geq r_{B}\}d\gamma(x).
$$
{\em Estimating $J_{k,l}^{loc}$ for $k < l+2$:}\\
Using Proposition \ref{prop:glob} and  Lemma \ref{lem:region}, we have
$$
\sum \limits _{l=0} ^{\infty} \sum \limits _{k=0} ^{l+1} J_{k,l}^{loc}
\lesssim r_{B}^{-2N} \sum \limits _{l=0} ^{\infty} \sum \limits _{k=0} ^{l+1} \sqrt{\gamma(C_{k}(B)\cap N_{\tau'}(B))}\|1_{C_{l}(B)}\tilde{f}\|_{2}
\lesssim \sum \limits _{l=0} ^{\infty} \exp(-C4^{l})\sum \limits _{k=0} ^{l+1} 2^{k\frac{n}{2}} \lesssim 1.
$$
{\em Estimating $J_{k,l}^{loc}$ for $k \geq l+2$:}\\
For $x \in \R^{n}$, $s\leq a\alpha m(x)$, $z \in B(x,am(x))$, and $(z,w) \in N_{\tau}$, we have $m(w)\sim m(z) \sim m(x)$ and thus $s \lesssim m(w)$.
Therefore, using Lemma \ref{lem:est} we have
\begin{equation*}
\begin{split}
J_{k,l} ^{loc} &\lesssim \int \limits _{C_{k}(B)\cap N_{\tau'}(B)} \sup \{s^{-2N} \int \limits _{C_{l}(B)}
M_{\frac{s^{2}}{\alpha}}(z,w)1_{N_{\tau}}(z,w)|\tilde{f}(w)|dw \;;\; (z,s) \in \Gamma_{x}^{(\frac{1}{\alpha},a\alpha)}(\gamma), s\geq \sqrt{\alpha}r_{B}\}d\gamma(x)\\
&\lesssim \int \limits _{C_{k}(B)\cap N_{\tau'}(B)} \sup \{s^{-2N} \int \limits _{C_{l}(B)}
M_{s^{2}}(z,w) \exp(-\frac{\alpha}{2^{17}}\frac{|e^{-s^{2}}z-w|^{2}}{1-e^{-2s^{2}}})|\tilde{f}(w)|dw \;;\; (z,s) \in \Gamma_{x}^{(\frac{1}{\alpha},a\alpha)}(\gamma)\}d\gamma(x).
\end{split}
\end{equation*}
For $x \in C_{k}(B)$, $w \in C_{l}(B)$, $s \leq \alpha am(x)$, and $z \in B(x,\frac{1}{\alpha}s)$ we have
$$
|e^{-s^{2}}z-w| \geq |x-w| -|x-z| - (1-e^{-s^{2}})|z| \geq 2^{k-1}r_{B}-(\frac{1}{\alpha}+\alpha(a+2a^{2}))s.
$$
Therefore, there exists $C_{\alpha}>0$ such that
\begin{equation*}
\begin{split}
J_{k,l}^{loc} & \lesssim \int \limits _{C_{k}(B)\cap N_{\tau'}(B)} \sup \{s^{-2N} \exp(-C_{\alpha}4^{k}(\frac{r_{B}}{s})^{2}) 
\int \limits _{C_{l}(B)} M_{s^{2}}(z,w)|\tilde{f}(w)|dw \;;\; (z,s) \in \Gamma_{x}^{(\frac{1}{\alpha},a\alpha)}(\gamma)\}d\gamma(x)\\
& \lesssim (2^{k}r_{B})^{-2N}\int \limits _{C_{k}(B)\cap N_{\tau'}(B)} \sup \{
\int \limits _{C_{l}(B)} M_{s^{2}}(z,w)|\tilde{f}(w)|dw \;;\; (z,s) \in \Gamma_{x}^{(\frac{1}{\alpha},a\alpha)}(\gamma)\}d\gamma(x)\\
& \lesssim (2^{k}r_{B})^{-2N}\sqrt{\gamma(C_{k}(B)\cap N_{\tau'}(B))}\|1_{C_{l}(B)}\tilde{f}\|_{2}
\lesssim 4^{-kN}\exp(-C4^{l})2^{k\frac{n}{2}},
\end{split}
\end{equation*}
where we have used Proposition \ref{prop:glob} and Lemma \ref{lem:region}.
This gives
$$
\sum \limits _{l=0} ^{\infty} \sum \limits _{k=0} ^{l+2} J^{loc}_{k,l}
\lesssim \sum \limits _{l=0} ^{\infty} \sum \limits _{k=0} ^{l+2} 4^{-k(N-\frac{n}{4})}\exp(-C4^{l}) \lesssim 1,
$$
which concludes the proof.
\end{proof}
\section{Remainder terms}
In this section, we handle the remainder terms
\begin{enumerate} 
\item
$
\int \limits _{0} ^{2} 1_{[\frac{m(.)}{b},2]}(t)t^{2N+1}L^{N}e^{\frac{t^{2}}{\alpha}L}\partial_{x_{j}}^{*}F(t,.) \frac{dt}{t}$,
\item
$
\int \limits _{0} ^{\frac{m(.)}{b}} t^{2N+1}L^{N}e^{\frac{(1+a^{2})t^{2}}{\alpha}L}\partial_{x_{j}}^{*}(1_{D^{c}}(t,.)t\partial_{x_{j}}e^{\frac{a^{2}t^{2}}{\alpha}L})u \frac{dt}{t}$,
\item
$ \int \limits _{\frac{m(.)}{b}} ^{\infty} t^{2N+2}L^{N}e^{\frac{(1+a^{2})t^{2}}{\alpha}L}u \frac{dt}{t}$,
\end{enumerate}
where $u \in L^{1}(\gamma)$ and $F$ is a $t^{1,2}(\gamma)$ atom.
\begin{lemma}
Let $N \in \Z_{+}$, $j \in \{1,...,n\}$, $b>0$ and $\alpha>2^{32}$.
Let $F$ be a $t^{1,2}(\gamma)$ atom associated with the ball $B=B(c_{B},r_{B}) \in \mathcal{B}_{2}$.
Then
$$
\|\int \limits _{0} ^{r_{B}} 1_{[\frac{m(.)}{b},2]}(t)t^{2N+1}L^{N}e^{\frac{t^{2}}{\alpha}L}\partial_{x_{j}}^{*}F(t,.) \frac{dt}{t}\|_{L^{1}} \lesssim 1.
$$
\end{lemma}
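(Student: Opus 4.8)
The plan is to set $\Phi_{t} := t^{2N+1}L^{N}e^{\frac{t^{2}}{\alpha}L}\partial_{x_{j}}^{*}F(t,\cdot)$ and
$G := \int_{0}^{r_{B}} 1_{[\frac{m(\cdot)}{b},2]}(t)\Phi_{t}\,\frac{dt}{t}$, and to estimate $\|G\|_{1}$ annulus by annulus via $\|G\|_{1} = \sum_{k\geq 0}\|1_{C_{k}(B)}G\|_{1}$. Since $F$ is a $t^{1,2}(\gamma)$ atom associated with $B$, its support forces $F(t,\cdot)=1_{B}F(t,\cdot)$ and $F(t,\cdot)\equiv 0$ for $t>r_{B}$ (if $(t,y)\in\mathrm{supp}(F)$ then $0<t\leq d(y,B^{c})\leq r_{B}$, hence $y\in B$); this is why the $t$-integral only runs to $r_{B}$, and also why $1_{[\frac{m(\cdot)}{b},2]}(t)$ can be replaced by $1_{\{m(\cdot)\leq bt\}}$ throughout. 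The two annular regimes call for different tools: on $C_{0}(B)=2B$ there is no room for any off-diagonal decay, and the indicator is exactly what saves us there; for $k\geq 1$ the indicator is useless (it is active on $\sim k$ scales) and one must instead invoke the off-diagonal estimates of Lemma \ref{lem:od}, whose rate will beat the exponential growth of $\gamma(2^{k+1}B)$ precisely because $\alpha>2^{32}$ is large.

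\emph{The term $k=0$.} For $x\in 2B$ one has $m(x)\sim m(c_{B})\gtrsim r_{B}$ (by Lemma \ref{lem:mnp1} together with $r_{B}\leq 2m(c_{B})$), say $m(x)\geq c_{0}r_{B}$ with $c_{0}$ absolute. Hence for $x\in 2B$ the indicator $1_{\{m(x)\leq bt\}}$ vanishes for $t<c_{0}r_{B}/b$, so $|G(x)|\leq\int_{c_{0}r_{B}/b}^{r_{B}}|\Phi_{t}(x)|\,\frac{dt}{t}$, and by Tonelli
$$
\|1_{2B}G\|_{1}\leq\int_{c_{0}r_{B}/b}^{r_{B}}\|1_{2B}\Phi_{t}\|_{1}\,\frac{dt}{t}.
$$
Next, by Cauchy--Schwarz, the bound $\gamma(2B)\lesssim\gamma(B)$ from Lemma \ref{lem:mm}, and the uniform-in-$t$ $L^{2}(\gamma)$-boundedness of $v\mapsto t^{2N+1}L^{N}e^{\frac{t^{2}}{\alpha}L}\partial_{x_{j}}^{*}v$ (a Riesz transform composed with $(t^{2}L)^{N+\frac12}e^{\frac{t^{2}}{\alpha}L}$, exactly as in the proof of Proposition \ref{prop:mol}), one has $\|1_{2B}\Phi_{t}\|_{1}\leq\sqrt{\gamma(2B)}\,\|\Phi_{t}\|_{2}\lesssim\sqrt{\gamma(B)}\,\|F(t,\cdot)\|_{2}$. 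Applying Cauchy--Schwarz once more over the interval $[c_{0}r_{B}/b,r_{B}]$, whose $\frac{dt}{t}$-measure is bounded by a constant depending only on $b$, together with the atom normalisation $\int_{0}^{r_{B}}\|F(t,\cdot)\|_{2}^{2}\,\frac{dt}{t}\leq\gamma(B)^{-1}$, yields $\|1_{2B}G\|_{1}\lesssim\sqrt{\gamma(B)}\cdot\gamma(B)^{-1/2}\lesssim 1$.

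\emph{The terms $k\geq 1$, and conclusion.} Here we bound the indicator by $1$. By Minkowski's integral inequality, the identity $F(t,\cdot)=1_{B}F(t,\cdot)$, and Lemma \ref{lem:od} (with $a=2$, applicable since $\alpha>2^{32}>4e^{8}$),
$$
\|1_{C_{k}(B)}G\|_{2}\leq\int_{0}^{r_{B}}\|1_{C_{k}(B)}1_{(0,r_{B})}(t)\Phi_{t}\|_{2}\,\frac{dt}{t}
\lesssim\int_{0}^{r_{B}}\exp\Big(-\tfrac{\alpha}{2^{6}e^{8}}4^{k}\big(\tfrac{r_{B}}{t}\big)^{2}\Big)\|F(t,\cdot)\|_{2}\,\frac{dt}{t}.
$$
Splitting the Gaussian factor, applying Cauchy--Schwarz, substituting $s=r_{B}/t$, and using $4^{k}\geq 1$ and the atom normalisation leads to $\|1_{C_{k}(B)}G\|_{2}\lesssim\exp\big(-\tfrac{\alpha}{2^{7}e^{8}}4^{k}\big)\gamma(B)^{-1/2}$. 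Since Lemma \ref{lem:mm} gives $\sqrt{\gamma(2^{k+1}B)}\leq e^{2^{9}4^{k}}\sqrt{\gamma(B)}$, Cauchy--Schwarz on $C_{k}(B)$ yields
$$
\|1_{C_{k}(B)}G\|_{1}\leq\sqrt{\gamma(2^{k+1}B)}\,\|1_{C_{k}(B)}G\|_{2}\lesssim\exp\Big(\big(2^{9}-\tfrac{\alpha}{2^{7}e^{8}}\big)4^{k}\Big);
$$
because $e^{8}<2^{12}$ and $\alpha>2^{32}$ we have $\tfrac{\alpha}{2^{7}e^{8}}>2^{13}>2^{9}$, so $\sum_{k\geq 1}\|1_{C_{k}(B)}G\|_{1}<\infty$. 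Adding the $k=0$ term gives $\|G\|_{1}\lesssim 1$. I expect the main difficulty to be the dichotomy just used: recognising that the indicator is indispensable on $2B$, where it turns an a priori logarithmically divergent $t$-integral into one over only $O(1)$ scales (this rests on $m\sim r_{B}$ on $2B$, i.e.\ on the admissibility $B\in\mathcal{B}_{2}$), while on the farther annuli the indicator must be discarded in favour of the off-diagonal decay of Lemma \ref{lem:od}, whose rate is made to dominate the growth of $\gamma(2^{k+1}B)$ by the choice $\alpha>2^{32}$.
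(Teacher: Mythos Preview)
Your proof is correct and follows essentially the same route as the paper: annular decomposition over $C_k(B)$, with the off-diagonal bounds of Lemma~\ref{lem:od} and the doubling estimate of Lemma~\ref{lem:mm} controlling the $k\geq 1$ terms, the sum converging precisely because $\alpha>2^{32}$ forces $\alpha/(2^{7}e^{8})>2^{9}$.

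The only point of contrast is the $k=0$ term. The paper dispatches it by invoking ``reasoning as in Proposition~\ref{prop:mol}'', i.e.\ the duality/square-function bound $\|\int_0^{r_B}\Phi_t\,\frac{dt}{t}\|_2\lesssim\gamma(B)^{-1/2}$, which needs no indicator. You instead retain the indicator on $2B$, use $m(x)\sim r_B$ there (via Lemma~\ref{lem:mnp1} and $B\in\mathcal{B}_2$) to shrink the $t$-range to $[c_0 r_B/b,\,r_B]$, and then require only the uniform-in-$t$ $L^2$-boundedness of $v\mapsto t^{2N+1}L^N e^{t^2 L/\alpha}\partial_{x_j}^{*}v$ together with Cauchy--Schwarz in $t$. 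Your route is more elementary and self-contained; the paper's is shorter because it simply recycles the $k=0$ case of Proposition~\ref{prop:mol}. One small correction to your closing commentary: the indicator is indispensable for \emph{your} $k=0$ argument, but the paper's duality approach shows it is not strictly needed for the estimate itself.
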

\begin{proof}
By Lemma \ref{lem:mnp1}, we have $m(y)\sim m(c_{B})$ for $y \in B$. Therefore, by Lemma \ref{lem:est}, and reasoning as in Proposition \ref{prop:mol}, we have
\begin{equation*}
\begin{split}
\|\int \limits _{0} ^{r_{B}} & 1_{[\frac{m(.)}{b},2]}(t)t^{2N+1}L^{N}e^{\frac{t^{2}}{\alpha}L}\partial_{x_{j}}^{*}F(t,.) \frac{dt}{t}\|_{L^{1}} 
\lesssim \sum \limits _{k=0} ^{\infty}
\int \limits _{C_{k}(B)} \int \limits _{0} ^{r_{B}} \int \limits _{B} |\tilde{K}_{t^{2},N,\alpha,j}(x,y)||F(t,y)| dy \frac{dt}{t} d\gamma(x)\\
&\lesssim 1+\sum \limits _{k=1} ^{\infty} \int \limits _{0} ^{r_{B}} \exp(-\frac{\alpha}{2^{22}}4^{k}(\frac{r_{B}}{t})^{2}) \sqrt{\gamma(2^{k+1}B)}\|F(t,.)\|_{2} \frac{dt}{t}\\
&\lesssim  1+\sum \limits _{k=1} ^{\infty} \exp(2^{9}.4^{k})\sqrt{\gamma(B)}\exp(-\frac{\alpha}{2^{23}}4^{k})
(\int \limits _{0} ^{r_{B}} \exp(-\frac{\alpha}{2^{22}}4^{k}(\frac{r_{B}}{t})^{2})\frac{dt}{t})^{\frac{1}{2}}\gamma(B)^{-\frac{1}{2}}\\
&\lesssim  1+\sum \limits _{k=1} ^{\infty} \exp(-(\frac{\alpha}{2^{23}}-2^{9})4^{k}) \lesssim 1.
\end{split}
\end{equation*}
\end{proof}

Combined with Proposition \ref{prop:glob}, this gives
\begin{corollary}
Let $a,b>0$, $N \in \Z_{+}$, $\{j=1,...,n\}$, and $\alpha>2^{32}$.
Let $F$ be a $t^{1,2}(\gamma)$ atom associated with the ball $B=B(c_{B},r_{B}) \in \mathcal{B}_{2}$.
Then
$$
\|T^{*}_{\text{glob},a}(\int \limits _{0} ^{r_{B}} 1_{[\frac{m(.)}{b},2]}(t)t^{2N+1}L^{N}e^{\frac{t^{2}}{\alpha}L}\partial_{x_{j}}^{*}F(t,.) \frac{dt}{t})\|_{1} \lesssim 1.
$$
\end{corollary}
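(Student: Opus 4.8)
The plan is to derive the corollary at once from the preceding lemma combined with Proposition~\ref{prop:glob}(i); essentially all of the substance is already contained in those two results. Writing $g := \int_{0}^{r_{B}} 1_{[m(\cdot)/b,2]}(t)\,t^{2N+1}L^{N}e^{t^{2}L/\alpha}\partial_{x_{j}}^{*}F(t,\cdot)\,\frac{dt}{t}$ and reading $T^{*}_{\text{glob},a}$ as $T^{*}_{glob,a,1}$ in the notation of Proposition~\ref{prop:glob} (so that the relevant local region is $N_{\tau}$ with $\tau=\frac{(1+a)(1+2a)}{2}$), the first step is to invoke the preceding lemma, whose hypotheses on $N$, $j$, $b$, $\alpha$ and $F$ coincide with those assumed here, to get $g\in L^{1}(\gamma)$ with $\|g\|_{L^{1}(\gamma)}\lesssim 1$. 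Since $T^{*}_{\text{glob},a}g$ depends only on $|g|$ and $\|g\|_{1}=\||g|\|_{1}$, the corollary reduces to the single inequality $\|T^{*}_{\text{glob},a}g\|_{1}\lesssim\|g\|_{1}$.

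The second step is to supply that inequality. Proposition~\ref{prop:glob}(i) gives it for $f\in C_{c}^{\infty}(\R^{n})$, so a routine density argument is needed because $g$ is only known to belong to $L^{1}(\gamma)$. I would first note that $T^{*}_{\text{glob},a}$ is subadditive, monotone under pointwise domination of $|f|$, and well defined on all of $L^{1}(\gamma)$: for each fixed $(y,t)$ the integrand $M_{t^{2}}(y,z)\,1_{N_{\tau}^{c}}(y,z)|f(z)|$ is dominated by $M_{t^{2}}(y,z)|f(z)|$, and the Gaussian decay of $z\mapsto M_{t^{2}}(y,z)$ beats the weight $e^{|z|^{2}}$, so $z\mapsto M_{t^{2}}(y,z)|f(z)|$ is $dz$-integrable. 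Then I would choose $g_{m}\in C_{c}^{\infty}(\R^{n})$ with $g_{m}\to g$ in $L^{1}(\gamma)$ and, after passing to a subsequence, $\gamma$-a.e.; Fatou's lemma inside the $z$-integral together with the elementary bound $\sup\liminf\le\liminf\sup$ for the supremum over the cone gives $T^{*}_{\text{glob},a}g(x)\le\liminf_{m}T^{*}_{\text{glob},a}g_{m}(x)$ for $\gamma$-a.e.\ $x$, and a further application of Fatou in $x$ combined with Proposition~\ref{prop:glob}(i) yields $\|T^{*}_{\text{glob},a}g\|_{1}\le\liminf_{m}\|T^{*}_{\text{glob},a}g_{m}\|_{1}\lesssim\liminf_{m}\|g_{m}\|_{1}=\|g\|_{1}$. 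Combining this with the first step gives $\|T^{*}_{\text{glob},a}g\|_{1}\lesssim 1$, which is the assertion.

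There is no genuine obstacle here. The hard work---summing the annular contributions of the atom over the sets $C_{k}(B)$ using the off-diagonal bounds of Lemma~\ref{lem:od} and the doubling property on admissible balls, Lemma~\ref{lem:mm}---has already been carried out in the preceding lemma, and the $L^{1}\to L^{1}$ bound for the global part of the non-tangential maximal operator rests on the Harboure--Torrea--Vivani estimate \cite{htv} via Lemma~\ref{lem:pu}. The only point that requires a word of care is the density/Fatou extension of Proposition~\ref{prop:glob}(i) from $C_{c}^{\infty}(\R^{n})$ to $L^{1}(\gamma)$ described above, and that is the mildest possible kind of obstacle; if one prefers, one may instead observe that $g$ also lies in $L^{2}(\gamma)$ (by the argument used for the $k=0$ term in Proposition~\ref{prop:mol}) and approximate in $L^{1}(\gamma)\cap L^{2}(\gamma)$.
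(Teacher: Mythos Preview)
Your argument is correct and matches the paper's own approach: the corollary is stated there as an immediate consequence of the preceding lemma together with Proposition~\ref{prop:glob}(i). The only addition you make is the routine density/Fatou extension of Proposition~\ref{prop:glob}(i) from $C_{c}^{\infty}$ to $L^{1}(\gamma)$, which the paper leaves implicit.
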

\begin{proposition}
\label{prop:r1}
Let $a>0$, $N \in \Z_{+}$, $\{j=1,...,n\}$, and $\alpha>2^{38}$.
Let $F$ be a $t^{1,2}(\gamma)$ atom associated with the ball $B=B(c_{B},r_{B}) \in \mathcal{B}_{2}$.
Then
$$
\|\int \limits _{0} ^{r_{B}} 1_{[\frac{m(.)}{b},2]}(t)t^{2N+1}L^{N}e^{\frac{t^{2}}{\alpha}L}\partial_{x_{j}}^{*}F(t,.) \frac{dt}{t}\|_{h^{1}_{\text{max},a}} \lesssim 1.
$$
\end{proposition}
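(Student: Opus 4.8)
The plan is to split the non-tangential maximal function into a local and a global part, dispose of the global part with the Corollary established immediately above, and attack the local part by a dyadic-annulus argument modelled on the proof of Theorem~\ref{thm:mol}. Write $g:=\int_{0}^{r_{B}}1_{[m(\cdot)/b,2]}(t)\,t^{2N+1}L^{N}e^{\frac{t^{2}}{\alpha}L}\partial_{x_{j}}^{*}F(t,\cdot)\,\frac{dt}{t}$ for the function in the statement, so that $\|g\|_{1}\lesssim1$ by the lemma established just above the Corollary. Splitting the Mehler kernel with respect to the local region $N_{\tau}$, $\tau=\tfrac{(1+a)(1+2a)}{2}$, gives $T^{*}_{a}g\le T^{*}_{\mathrm{glob},a}g+T^{*}_{\mathrm{loc},a}g$, where $T^{*}_{\mathrm{loc},a}g(x)=\sup\{\int_{\R^{n}}M_{s^{2}}(y,z)1_{N_{\tau}}(y,z)|g(z)|\,dz:(y,s)\in\Gamma^{a}_{x}(\gamma)\}$. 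The Corollary gives $\|T^{*}_{\mathrm{glob},a}g\|_{1}\lesssim1$, so everything reduces to proving $\|T^{*}_{\mathrm{loc},a}g\|_{1}\lesssim1$.

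Two structural facts about $g$ drive the local estimate. First, discarding the cutoff one has $|g(x)|\le\int_{0}^{r_{B}}\int_{B}|\tilde{K}_{t^{2},N,\alpha,j}(x,y)|\,|F(t,y)|\,dy\,\tfrac{dt}{t}$, so the (absolute-value form of the) off-diagonal bound in Lemma~\ref{lem:od}, together with Cauchy--Schwarz and the atom normalisation $\int_{0}^{r_{B}}\int_{\R^{n}}|F(t,y)|^{2}\tfrac{dy\,dt}{t}\le\gamma(B)^{-1}$, gives $\|1_{C_{k}(B)}g\|_{2}\lesssim\exp(-2^{-23}\alpha\,4^{k})\gamma(B)^{-1/2}$ for all $k$, exactly as in the proof of Proposition~\ref{prop:mol}; the gain comes from the factor $\exp(-c\,4^{k}(r_{B}/t)^{2})\le\exp(-c\,4^{k})$, valid since $t\le r_{B}$ throughout the integral. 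Second, the cutoff forces $g$ to vanish on $\{m>b\,r_{B}\}$, since there $[m(\cdot)/b,2]\cap(0,r_{B}]=\emptyset$. This vanishing plays the role that the factorisation $g=L^{N}\tilde g$ of a genuine molecule would play (the latter fails here because the cutoff depends on the base point); it is the ``local Hardy space phenomenon'' mentioned in the introduction.

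For the local part, decompose $\R^{n}$ into the annuli $C_{k}(B)$ and estimate $\int_{C_{k}(B)}T^{*}_{\mathrm{loc},a}g\,d\gamma$ by Cauchy--Schwarz, using that $T^{*}_{\mathrm{loc},a}$ is dominated (via Lemma~\ref{lem:pu}, as in the proof of Proposition~\ref{prop:glob}(ii)) by a local Hardy--Littlewood maximal operator --- hence bounded on $L^{2}(\gamma)$ --- which ``propagates finitely'' in the sense that $T^{*}_{\mathrm{loc},a}(1_{C_{l}(B)}g)(x)=0$ unless $d(x,C_{l}(B))\lesssim m(x)$. Away from the local region $N_{\tau'}(B)$ one combines these with the annular $L^{2}$ bound on $g$ above and the doubling estimate $\gamma(C_{k}(B))\le e^{8(2^{k+2}+1)^{2}}\gamma(B)$ of Lemma~\ref{lem:mm}; the resulting double series over $k$ and $l$ converges precisely because the decay rate $2^{-23}\alpha$ beats the growth exponent $8\cdot2^{2k+4}$, i.e. because $\alpha>2^{38}$. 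Inside $N_{\tau'}(B)$ one separates the ``thick'' case $m(c_{B})\lesssim b\,r_{B}$, in which $N_{\tau'}(B)$ meets only boundedly many annuli and has measure $\lesssim_{b}\gamma(B)$ by Lemma~\ref{lem:mm} (so the finitely many terms are each $\lesssim_{b}1$, Lemma~\ref{lem:region} giving the volume of $C_{k}(B)\cap N_{\tau'}(B)$ in the borderline sub-cases), from the ``thin'' case $m(c_{B})\gg b\,r_{B}$, in which every ball $B(x,Cm(x))$ seen by $T^{*}_{\mathrm{loc},a}$ at a point $x\in N_{\tau'}(B)$ lies inside $\{m>b\,r_{B}\}$, so $T^{*}_{\mathrm{loc},a}g$ vanishes on $N_{\tau'}(B)$ by the second structural fact. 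Summing over $k$ gives $\|T^{*}_{\mathrm{loc},a}g\|_{1}\lesssim1$, which with the global estimate proves the proposition.

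The main obstacle is the local region $N_{\tau'}(B)$: there the local maximal operator at $x$ averages $g$ over a ball of radius comparable to $m(c_{B})$, which may be far larger than $r_{B}$, so it mixes arbitrarily many annuli $C_{l}(B)$ and no off-diagonal decay between annuli is available. The way around it is the thick/thin dichotomy: if $B$ is thick there are only boundedly many relevant annuli, while if $B$ is thin the cutoff-induced vanishing of $g$ removes the contribution entirely --- which is exactly why the cutoff $t\ge m(\cdot)/b$ was inserted in the first place, and why this proposition is a genuine variant of, rather than a corollary of, Theorem~\ref{thm:mol}.
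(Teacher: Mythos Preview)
Your argument is correct but organises the local estimate differently from the paper. Both of you reduce to the local part $T^{*}_{\mathrm{loc},a}g$ via the Corollary. The paper then keeps the full kernel expression inside the maximal function and splits according to whether the \emph{inner} pair $(z,w)$ lies in $N_{1}$: in the piece $I_{loc}$ (where $(z,w)\in N_{1}$) the chain $m(x)\sim m(z)\sim m(w)\sim m(c_{B})$ together with the cutoff $t\ge m(z)/b$ reduces the $t$--integral to a logarithmically bounded interval $[\kappa m(c_{B}),r_{B}]$; in the pieces $I^{glob}_{k}$ the cutoff is used instead to force $m(x)\lesssim r_{B}$, which bounds $|x-z|$ and feeds into the off-diagonal kernel estimate. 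You, by contrast, treat $g$ as a black box with two properties (annular $L^{2}$ decay from Lemma~\ref{lem:od}, vanishing on $\{m>br_{B}\}$ from the cutoff) and split according to whether the \emph{outer} base point $x$ lies in $N_{\tau'}(B)$, with a thick/thin sub-dichotomy on $m(c_{B})$ versus $r_{B}$ inside. Your route is more modular --- it reads as ``$g$ is almost a molecule; replace the missing $L^{N}$-factorisation by the support condition'' --- and makes the parallel with Theorem~\ref{thm:mol} explicit, whereas the paper's route shows more directly how the cutoff drives every estimate. The two are morally equivalent: the paper's use of the cutoff to get $m(x)\lesssim r_{B}$ (in $I^{glob}_{k}$) and a log-bounded $t$-interval (in $I_{loc}$) corresponds to your vanishing of $g$ in the thin case and bounded volume of $N_{\tau'}(B)$ in the thick case. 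One point to make precise: your ``finite propagation away from $N_{\tau'}(B)$'' needs the observation (implicit in the proof of Theorem~\ref{thm:mol}) that $x\in C_{k}(B)\setminus N_{\tau'}(B)$ forces $m(x)<2^{k}r_{B}/C_{a}$, so that the local maximal operator at $x$ reaches only $O(1)$ adjacent annuli; with $\tau'$ chosen as there (or larger) the constants close under $\alpha>2^{38}$.
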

\begin{proof}
Given the above Corollary, and $\tau$ as in Proposition \ref{prop:glob}, we only have to estimate
$$
I = \int \limits _{\R^{n}} \sup \{ \int \limits _{\R^{n}} M_{s^{2}}(y,z)1_{N_{\tau}}(y,z)
\int \limits _{0} ^{r_{B}} \int \limits _{\R^{n}}1_{[\frac{m(z)}{b},2]}(t)|\tilde{K}_{t^{2},N,\alpha,j}(z,w)||F(t,w)|dw \frac{dt}{t} dz \;;\; (y,s) \in \Gamma_{x}^{a}(\gamma)\} d\gamma(x).
$$
For $w \in B$ and $t\leq r_{B}$, we have $t \lesssim m(w)$ by Lemma \ref{lem:mnp1}. Therefore, by Lemma \ref{lem:est}
\begin{equation*}
\begin{split}
I &\lesssim \int \limits _{\R^{n}} \underset{(y,s) \in \Gamma_{x}^{a}(\gamma)}{\sup} \int \limits _{\R^{n}} M_{s^{2}}(y,z)1_{N_{\tau}}(y,z)
\int \limits _{0} ^{r_{B}} \int \limits _{\R^{n}}1_{[\frac{m(z)}{b},2]}(t) \exp(-\frac{\alpha}{2^{23}}\frac{|e^{-t^{2}}w-z|^{2}}{1-e^{-2t^{2}}})M_{t^{2}}(z,w)
|F(t,w)|dw \frac{dt}{t} dz  d\gamma(x)\\
&\lesssim I_{loc}+\sum \limits _{k=0} ^{\infty} I^{glob} _{k},
\end{split}
\end{equation*}
where
\begin{equation*}
\begin{split}
I^{glob}_{k}:=&\int \limits _{C_{k}(B)} \underset{(y,s) \in \Gamma_{x}^{a}(\gamma)}{\sup} \int \limits _{\R^{n}} M_{s^{2}}(y,z)1_{N_{\tau}}(y,z)
\int \limits _{0} ^{r_{B}} \int \limits _{\R^{n}} 1_{[\frac{m(z)}{b},2]}(t) e^{-\frac{\alpha}{2^{23}}\frac{|e^{-t^{2}}w-z|^{2}}{1-e^{-2t^{2}}}}1_{N_{1} ^{c}}(z,w)M_{t^{2}}(z,w)
|F(t,w)|dw \frac{dt}{t} dz  d\gamma(x),\\
I_{loc}:=&\int \limits _{\R^{n}} \underset{(y,s) \in \Gamma_{x}^{a}(\gamma)}{\sup} \int \limits _{\R^{n}} M_{s^{2}}(y,z)1_{N_{\tau}}(y,z)
\int \limits _{0} ^{r_{B}} \int \limits _{\R^{n}}1_{[\frac{m(z)}{b},2]}(t) e^{-\frac{\alpha}{2^{23}}\frac{|e^{-t^{2}}w-z|^{2}}{1-e^{-2t^{2}}}}1_{N_{1}}(z,w)M_{t^{2}}(z,w)
|F(t,w)|dw \frac{dt}{t} dz  d\gamma(x).
\end{split}
\end{equation*}
{\em Estimating $I^{glob}_{k}$:}\\
For $w \in B$, $x \in C_{k}(B)$, $y \in B(x,am(x))$, $z \in B(y,\tau m(y))$, $t \leq r_{B}$, and $m(z)\leq br_{B}$, 
Lemma \ref{lem:mnp1}, gives that $t \lesssim m(w)$, $|x-z| \leq (a+2\tau(1+a))m(x)$ and $m(x) \leq (1+a+2\tau(1+a))m(z) \leq b(1+a+2\tau(1+a))r_{B}$.
Therefore
$$
|e^{-t^{2}}w-z| \geq |w-x|-|x-z|-(1-e^{-t^{2}})|w| \geq 2^{k-1}r_{B}-C_{a,b}r_{B},
$$
for some $C_{a,b}>0$.
Let $M_{a,b} \in \N$ be such that $C_{a,b} \leq 2^{M_{a,b}}$. 
We first notice that, for $k \leq M_{a,b}+1$, $ x \in C_{k}(B)$, and $z \in B(x,(a+2\tau(1+a))m(x))$  Lemma \ref{lem:mnp1} gives 
$m(z) \sim m(x) \sim m(c_{B})$ with implicit constant depending only on $a$ and $b$. In particular $\frac{m(z)}{b} \geq \kappa_{a,b}m(c_{B})$ for some
$\kappa_{a,b}>0$.
Therefore
\begin{equation*}
\begin{split}
\sum \limits _{k=0} ^{M_{a,b}+1} I^{glob}_{k} &\lesssim
\sum \limits _{k=0} ^{M_{a,b}+1}  \sqrt{\gamma(2^{k+1}B)} \int \limits _{\kappa_{a,b}m(c_{B})} ^{2m(c_{B})} 
\|T^{*}_{a}(e^{t^{2}L}|F(t,.)|)\|_{2} \frac{dt}{t} \\
&\lesssim
\sum \limits _{k=0} ^{M_{a,b}+1}  \sqrt{\gamma(B)}\exp(2^{9}.4^{k})
(\int \limits _{\kappa_{a,b}m(c_{B})} ^{2m(c_{B})}  \frac{dt}{t})^{\frac{1}{2}}
(\int \limits _{0} ^{r_{B}}\|F(t,.)\|_{2}  ^{2}\frac{dt}{t})^{\frac{1}{2}} \\
&\lesssim \sum \limits _{k=0} ^{M_{a,b}+1} \exp(2^{9}.4^{k}) \lesssim 1.
\end{split}
\end{equation*}
For $k \geq M_{a,b}+2$ we estimate as follows, using Lemma \ref{lem:est},
\begin{equation*}
\begin{split}
\sum \limits _{k=M_{a,b}+2} ^{\infty} I^{glob}_{k} &\lesssim
\sum \limits _{k=M_{a,b}+2} ^{\infty}  \sqrt{\gamma(2^{k+1}B)} \int \limits _{0} ^{r_{B}} \exp(-\frac{\alpha}{2^{28}}4^{k}(\frac{r_{B}}{t})^{2})
\|T^{*}_{a}(e^{t^{2}L}|F(t,.)|)\|_{2} \frac{dt}{t} \\
&\lesssim
\sum \limits _{k=M_{a,b}+2} ^{\infty}  \sqrt{\gamma(B)}\exp(2^{9}.4^{k})\exp(-\frac{\alpha}{2^{29}}4^{k}) 
(\int \limits _{0} ^{r_{B}} \exp(-\frac{\alpha}{2^{28}}(\frac{2^{k}r_{B}}{t})^{2}) \frac{dt}{t})^{\frac{1}{2}}
(\int \limits _{0} ^{r_{B}}\|F(t,.)\|_{2}  ^{2}\frac{dt}{t})^{\frac{1}{2}} \\
&\lesssim \sum _{k=M_{a,b}+2} ^{\infty}\exp(2^{9}.4^{k})\exp(-\frac{\alpha}{2^{29}}4^{k}) \lesssim 1.
\end{split}
\end{equation*}
{\em Estimating $I_{loc}$:}\\
We have
$$
I_{loc} \lesssim \int \limits _{\R^{n}} \underset{(y,s) \in \Gamma_{x}^{a}(\gamma)}{\sup} \int \limits _{\R^{n}} M_{s^{2}}(y,z)1_{N_{\tau}}(y,z)
\int \limits _{0} ^{r_{B}} \int \limits _{\R^{n}}1_{[\frac{m(z)}{b},2]}(t) 1_{N_{1}}(z,w)m(z)^{-n}|F(t,w)|dw \frac{dt}{t} dz  d\gamma(x).
$$
For $w \in B$, $(z,w) \in N_{1}$, $(y,z) \in N_{\tau}$, and $(x,y) \in N_{a}$, we have that $m(x)\sim m(y) \sim m(z) \sim m(w) \sim m(c_{B})$.
Moreover $|x-c_{B}|\leq am(x)+\tau m(y)+m(z)+m(c_{B}) \lesssim m(c_{B})$, $|x-w| \lesssim m(w)$, and $e^{-|w|^{2}}\sim e^{-|x|^{2}}$.
Let $\kappa, \lambda$ be such that $\frac{m(z)}{b} \geq \kappa m(c_{B})$ and $|x-c_{B}| \leq \lambda m(c_{B})$.
Using the positivity of $(e^{tL})_{t>0}$, and the fact that $e^{L}1=1$, we have that
\begin{equation*}
I_{loc} \lesssim \int \limits _{\kappa m(c_{B})} ^{r_{B}} m(c_{B})^{-n}\int \limits _{B(c_{B},\lambda m(c_{B}))} \|F(t,.)\|_{1} dx \frac{dt}{t} 
\lesssim (\int \limits _{\kappa m(c_{B})} ^{2m(c_{B})} \frac{dt}{t})^{\frac{1}{2}}\sqrt{\gamma(B)}  (\int \limits _{0} ^{r_{B}} \|F(t,.)\|_{2} \frac{dt}{t})^{\frac{1}{2}}
\lesssim 1.
\end{equation*}
\end{proof}
\begin{proposition}
\label{prop:Dcomp}
Let $a,a'>0$, $N \in \Z_{+}$, $j \in \{1,...,n\}$ and $\alpha > \max(32e^{4},4\sqrt{a}e^{2a^{2}})$. Let $b \geq 
\max(2e,\sqrt{\frac{32e^{4}}{(\alpha-32e^{4})(1-e^{-2\frac{a^{2}}{\alpha}})}})$.
Then $$
\|\int \limits _{0} ^{\frac{m(.)}{b}} t^{2N+1}L^{N}e^{\frac{t^{2}}{\alpha}L}\partial_{x_{j}}^{*}(1_{D^{c}}(t,.)t\partial_{x_{j}}e^{\frac{a^{2}t^{2}}{\alpha}L})u \frac{dt}{t}\|_{h^{1}_{\text{max},a'}} \lesssim \|u\|_{L^{1}(\gamma)}. $$
\end{proposition}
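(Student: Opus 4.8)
Write $Tu$ for the function whose $h^{1}_{\mathrm{max},a'}$-norm is to be estimated; by the kernel identities of Section~3 it has the representation
\[
Tu(\zeta)=\int_{\R^n}\mathcal L(\zeta,w)\,u(w)\,dw,\qquad
\mathcal L(\zeta,w)=\int_0^{m(\zeta)/b}\!\int_{\R^n}\widetilde K_{t^2,N,\alpha,j}(\zeta,z)\,1_{D^c}(t,z)\,t\,\partial_{z_j}M_{\frac{a^2t^2}{\alpha}}(z,w)\,dz\,\frac{dt}{t}.
\]
Since $M_{s^2}\ge 0$, we have $T^*_{a'}(Tu)(x)\le\sup_{(y,s)\in\Gamma^{a'}_x(\gamma)}e^{s^2L}(|Tu|)(y)$, and by the decomposition used in the proof of Proposition~\ref{prop:glob} the right-hand side is at most $T^*_{\mathrm{glob},a',1}(|Tu|)(x)$ plus a local term controlled by $\sup_{0<r\le\tau'm(x)}\gamma(B(x,r))^{-1}\int_{B(x,r)}|Tu|\,d\gamma$. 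Proposition~\ref{prop:glob}(i) makes the global term bounded on $L^1(\gamma)$, so that piece will follow once $\|Tu\|_1\lesssim\|u\|_1$; for the local term the key point (established next) is that $\mathcal L(\zeta,w)$ vanishes unless $w$ lies in $N_\tau^c$ relative to $\zeta$, so the weak $(1,1)$ theory is never needed: since $m(z)\sim m(x)$ for $z\in B(x,\tau'm(x))$ one has $\mathcal L(z,w)\le\widetilde{\mathcal L}(x,w):=\sup_{z\in B(x,\tau'm(x))}\mathcal L(z,w)$, and the local term is then $\lesssim\int_{\R^n}\widetilde{\mathcal L}(x,w)|u(w)|\,dw$. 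Thus the whole proof reduces to a kernel bound for $\mathcal L$ followed by a weighted Schur estimate.

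\textbf{The kernel bound.} On the support of the integrand $m(z)\le t\le m(\zeta)/b$; as $b\ge 2e>1$ this forces $m(z)<1$, hence $m(z)=1/|z|$, $|z|\ge b/m(\zeta)\ge b$, and $|z|\ge 1/t$, so $|\zeta-z|\gtrsim b\max(1,m(\zeta))$ is large. Because $t\le m(\zeta)$ one is tempted to invoke Lemma~\ref{lem:est}(iii) for $\widetilde K_{t^2,N,\alpha,j}(\zeta,z)$, but its hypothesis $t\lesssim m(z)$ \emph{fails} here. Instead I would return to Corollary~\ref{cor:Ktilde}, note $t^{2N+1}(1-e^{-2t^2/\alpha})^{-(N+\frac12)}\lesssim_{\alpha,N}1$, absorb the argument $\sqrt{1-e^{-2t^2/\alpha}}\,z_j$ of $Q_N$ — which is only polynomially large in $|z|$ — into the factor $e^{-|z|^2}$ contained in the $e^{|\zeta|^2-|z|^2}$ of $\widetilde K_{t^2,N,\alpha,j}$ (legitimate precisely because $|z|$ is forced to be large), and absorb the argument $\tfrac{e^{-t^2/\alpha}z_j-\zeta_j}{\sqrt{1-e^{-2t^2/\alpha}}}$ into part of the Gaussian $M_{t^2/\alpha}(z,\zeta)$ using $\sup_{w>0}w^ke^{-cw^2}<\infty$; the factor $t\,\partial_{z_j}M_{a^2t^2/\alpha}(z,w)$ is handled identically from the $N=0$ case of Corollary~\ref{cor:Ktilde}, with $t(1-e^{-2a^2t^2/\alpha})^{-1/2}\lesssim_{a,\alpha}1$. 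After re-centering the two Mehler exponents at the common scale $t^2$ by Lemma~\ref{lem:slow2} (which requires $\alpha/a^2>1$), this gives a bound of the shape
\[
\mathcal L(\zeta,w)\lesssim e^{|\zeta|^2}\int_0^{m(\zeta)/b}\int_{|z|\ge 1/t}t^{-2n}\exp\!\Bigl(-c\,\tfrac{|\zeta-z|^2+|z-w|^2}{t^2}\Bigr)e^{-c|z|^2}\,dz\,\frac{dt}{t},
\]
and integrating in $z$ and $t$ yields a clean estimate $\mathcal L(\zeta,w)\lesssim e^{|\zeta|^2}\Phi(\zeta,w)$, with $\Phi$ of Gaussian type in $(\zeta,w)$ and carrying extra decay of the form $\exp(-c\,b^2(b-1)^2\max(1,|\zeta|)^4)$ and $\exp(-c|\zeta-w|^2)$; in particular $(\zeta,w)\in N_\tau^c$ once $b$ is large.

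\textbf{Schur test and conclusion.} On $B(x,\tau'm(x))$ one has $m(\zeta)\sim m(x)$ and $e^{|\zeta|^2}\sim e^{|x|^2}$ by Lemma~\ref{lem:mnp1}, so $\widetilde{\mathcal L}(x,w)$ obeys the same bound as $\mathcal L$ with $x$ in place of $\zeta$. It then remains to verify $\int_{\R^n}\widetilde{\mathcal L}(x,w)\,d\gamma(x)\lesssim\pi^{-n/2}e^{-|w|^2}$, which gives the local term by Fubini, together with the analogous Schur bound for the $\|Tu\|_1$ term (there one additionally exploits $\int_{\R^n}M_{s^2}(y,\zeta)e^{-|\zeta|^2}\,d\zeta$-type identities and $\int_{\R^n}M_s(x,y)e^{-|x|^2}\,dx=e^{-|y|^2}$). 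In both, the growth $e^{|x|^2}$ is absorbed against $d\gamma(x)$, and the surviving decay $\exp(-c\,b^2(b-1)^2\max(1,|x|)^4)$ together with the $e^{-c|z|^2}$ must dominate the Gaussian-ball growth $e^{2a^2(2b+1)^2}$ of Lemma~\ref{lem:mm}; this is precisely what the hypotheses $\alpha>\max(32e^4,4\sqrt a e^{2a^2})$ and $b\ge\max(2e,\sqrt{32e^4/((\alpha-32e^4)(1-e^{-2a^2/\alpha}))})$ arrange. A density argument from $C_c^\infty(\R^n)$ to $L^1(\gamma)$ completes the proof.

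\textbf{Main obstacle.} The technical heart is the pointwise bound on $\widetilde K_{t^2,N,\alpha,j}(\zeta,z)$ in the regime $t\ge m(z)$, which lies outside the off-diagonal machinery of Section~3: one must recognize that $|z|$ is forced to be large and trade the polynomial blow-up of the Mehler-derivative kernel against the density $e^{-|z|^2}$. The secondary difficulty is the bookkeeping of the weights $e^{|\zeta|^2}$, $e^{-|z|^2}$, $e^{-|x|^2}$, $e^{-|w|^2}$ through the maximal-function reduction and the Schur test, and checking that the constants dictated by the hypotheses make the decay outrun Lemma~\ref{lem:mm}.
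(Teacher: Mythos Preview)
Your kernel analysis identifies the right mechanism: on the support of the integrand one has $m(z)\le t\le m(\zeta)/b$, which forces $|z|\ge b\max(1,|\zeta|)$, so the polynomial factors coming from Corollary~\ref{cor:Ktilde} can be absorbed into $e^{-|z|^2}$. This is exactly what the paper does. But you then take a long detour that the paper avoids entirely.

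The paper's proof is a single line once the kernel work is done: it shows directly that
\[
\Big\|\int_0^{m(\cdot)/b} t^{2N+1}L^{N}e^{\frac{t^2}{\alpha}L}\partial_{x_j}^*\bigl(1_{D^c}(t,\cdot)\,t\partial_{x_j}e^{\frac{a^2t^2}{\alpha}L}u\bigr)\,\frac{dt}{t}\Big\|_{L^\infty}
\ \lesssim\ \|u\|_{L^1(\gamma)},
\]
and then invokes the positivity of $e^{sL}$ together with $e^{sL}1=1$ to conclude that $T^*_{a'}$ applied to a bounded function is pointwise bounded by its sup norm; integrating against the probability measure $d\gamma$ gives the $h^1_{\max,a'}$ bound for free. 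No global/local split of the maximal function, no Schur test, no $\widetilde{\mathcal L}$. Concretely, the paper bounds $t^{-1}|\widetilde K_{t^2,N,\alpha,j}(x,y)|\lesssim \exp(-\tfrac{\alpha}{32e^4}|y|^2)$ on the relevant region (using $|e^{-t^2}y-x|\ge|y|/(2e)$ since $|y|\ge 2e|x|$), then pairs this against $|t\partial_{y_j}M_{a^2t^2/\alpha}(y,z)|$ via Lemma~\ref{lem:slow2}; the numerical hypotheses on $\alpha$ and $b$ are exactly what makes the residual growth in $|y|$ absorbable, leaving $\int M_{t^2}(y,z)|u(z)|\,dz\,d\gamma(y)=\|u\|_{L^1(\gamma)}$.

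Your route could probably be pushed through, but two points: first, the claim that $\mathcal L(\zeta,w)$ \emph{vanishes} unless $(\zeta,w)\in N_\tau^c$ is false (take $\zeta=w=0$; the intermediate $z$ lives far away but the integral need not be zero), though your subsequent pointwise bound on the local maximal term does not actually use this. Second, the Schur step ``$\int\widetilde{\mathcal L}(x,w)\,d\gamma(x)\lesssim e^{-|w|^2}$'' requires tracking the constants carefully enough to beat $e^{-|w|^2}$, which you assert but do not check. The paper's $L^\infty$ argument sidesteps both issues and is what you should use.
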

\begin{proof}
We claim that 
$$
\|\int \limits _{0} ^{\frac{m(.)}{b}} t^{2N+1}L^{N}e^{\frac{t^{2}}{\alpha}L}\partial_{x_{j}}^{*}(1_{D^{c}}(t,.)t\partial_{x_{j}}e^{\frac{a^{2}t^{2}}{\alpha}L})u \frac{dt}{t}\|_{\infty} \lesssim \|u\|_{1}. $$
The result then follows from the fact that $e^{sL}1=1$ for all $s>0$ and the positivity of $e^{sL}$.
To prove the claim, fix $x \in \R^{n}$, and consider $t\geq 0$ and $y \in \R^{n}$ such that $m(y) \leq t \leq \frac{m(x)}{b}$.
Then $|y| \geq 1$ and $|y| \geq b|x|\geq 2e|x|$. Therefore $|e^{-t^{2}}y-x|\geq \frac{|y|}{2e}+\frac{|y|}{2e}-|x|\geq\frac{|y|}{2e}$
and $t^{-1} \leq |y|$. Using Corollary \ref{cor:Ktilde} and Lemma \ref{lem:est}, this gives, for some $M>0$
\begin{equation*}
\begin{split}
t^{-1}|\tilde{K}_{t^{2},N,\alpha,j}(x,y)| &\lesssim |y|^{M}\exp(-\frac{\alpha}{2e^{2}}\frac{|e^{-t^{2}}y-x|^{2}}{1-e^{-2t^{2}}})M_{t^{2}}(x,y)\\
&\lesssim  |y|^{M+n}\exp(-\frac{\alpha}{16e^{4}}|y|^{2}) \lesssim \exp(-\frac{\alpha}{32e^{4}}|y|^{2}).
\end{split}
\end{equation*}
Using Lemma \ref{lem:slow2}, and the fact that $t\mapsto \frac{t^{2}}{1-e^{-\frac{2a^{2}t^{2}}{\alpha}}}$ is increasing on $(0,1)$,  we then have
\begin{equation*}
\begin{split}
\|\int \limits _{0} ^{\frac{m(.)}{b}} &t^{2N+1}L^{N}e^{\frac{t^{2}}{\alpha}L}\partial_{x_{j}}^{*}(1_{D^{c}}(t,.)t\partial_{x_{j}}e^{\frac{a^{2}t^{2}}{\alpha}L})u \frac{dt}{t}\|_{\infty} 
\lesssim \int \limits _{0} ^{\frac{1}{b}} \int \limits _{\R^{n}} \int \limits _{\R^{n}} \frac{|e^{-\frac{a^{2}t^{2}}{\alpha}}y_{j}-z_{j}|}{\sqrt{1-e^{-\frac{2a^{2}t^{2}}{\alpha}}}}
M_{\frac{a^{2}t^{2}}{\alpha}}(y,z)\exp(-\frac{\alpha}{32e^{4}}|y|^{2})|u(z)|dzdydt\\
&\lesssim \int \limits _{0} ^{\frac{1}{b}} \int \limits _{\R^{n}} \int \limits _{\R^{n}} 
t^{-n}\exp(-\frac{\alpha}{4\sqrt{a}e^{2a^{2}}}\frac{|e^{-t^{2}}y-z|^{2}}{1-e^{-2t^{2}}})\exp(\frac{t^{2}}{1-e^{-\frac{2a^{2}t^{2}}{\alpha}}}\frac{1}{2b^{2}}|y|^{2})
\exp(-\frac{\alpha}{32e^{4}}|y|^{2})|u(z)|dzdydt\\
&\lesssim \int \limits _{0} ^{\frac{1}{b}} \int \limits _{\R^{n}} \int \limits _{\R^{n}} 
M_{t^{2}}(y,z)\exp(\frac{1}{2b^{2}(1-e^{-\frac{2a^{2}}{\alpha}})}|y|^{2})
\exp(-\frac{\alpha}{32e^{4}}|y|^{2})|u(z)|dzdydt\\
&\lesssim \int \limits _{0} ^{\frac{1}{b}} \int \limits _{\R^{n}} e^{t^{2}L}|u(y)| d\gamma(y)dt \lesssim \|u\|_{1}.
\end{split}
\end{equation*}
\end{proof}

\begin{proposition}
\label{prop:jinf}
Let $N \in \Z_{+}$, $a,a',b>0$, and $\alpha>8e^{2a^{2}}$. For all $u\in C_{c}^{\infty}(\R^{n})$, we have
$$
\| \int\limits _{\frac{m(.)}{b}} ^{\infty} (t^{2}L)^{N+1}e^{\frac{(1+a^{2})t^{2}}{\alpha}L}u \frac{dt}{t}\|_{h^{1}_{\text{max},a'}} \lesssim \|u\|_{1}.
$$
\end{proposition}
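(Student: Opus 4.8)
The plan is to reduce the $h^{1}_{\mathrm{max},a'}$-bound to an $L^{1}(\gamma)$-bound for a maximal function built from Mehler-kernel estimates, and then to exploit the decisive point: the lower cut-off $t\ge m(x)/b$ — equivalently $\sqrt{s}\gtrsim m(x)$ for $s:=\tfrac{(1+a^{2})t^{2}}{\alpha}$ — places the Mehler kernel $M_{s}(x,\cdot)$ at \emph{supercritical} scales, where it carries an extra Gaussian decay that absorbs the polynomial growth produced by the drift $x\cdot\nabla$ in $L$ and renders the $\tfrac{dt}{t}$-integral convergent. This is why the pointwise bounds of Lemma \ref{lem:est}, valid only for $t\lesssim m$, cannot be used here and a direct supercritical-scale analysis is needed.

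\emph{Step 1 (removing the supremum).} Write $g:=\int_{m(\cdot)/b}^{\infty}(t^{2}L)^{N+1}e^{\frac{(1+a^{2})t^{2}}{\alpha}L}u\,\tfrac{dt}{t}$. For $(y,r)\in\Gamma^{a'}_{x}(\gamma)$ one has $m(y)\sim m(x)$ by Lemma \ref{lem:mnp1}, and by the semigroup law $e^{r^{2}L}g(y)=\int_{m(y)/b}^{\infty}(t^{2}L)^{N+1}e^{(r^{2}+\frac{(1+a^{2})t^{2}}{\alpha})L}u(y)\,\tfrac{dt}{t}$. Since $r\le a'm(x)\le a'bt$ on the domain of integration, I would write $r^{2}+\tfrac{(1+a^{2})t^{2}}{\alpha}=\tfrac{(1+a^{2})\tilde t^{2}}{\alpha}$ with $t\le\tilde t\le c_{1}t$, use $(t^{2}L)^{N+1}=(t/\tilde t)^{2(N+1)}(\tilde t^{2}L)^{N+1}$ together with $(t/\tilde t)^{2(N+1)}\le1$, and change variables $t\mapsto\tilde t$, obtaining
$$T^{*}_{a'}g(x)\lesssim\int_{c_{2}m(x)}^{\infty}\ \sup_{|y-x|<a'm(x)}\bigl|(t^{2}L)^{N+1}e^{\frac{(1+a^{2})t^{2}}{\alpha}L}u(y)\bigr|\,\tfrac{dt}{t},\qquad c_{1},c_{2}=c_{1,2}(a',b,\alpha).$$
It then suffices to integrate the right-hand side against $\gamma$, treating the local and global parts of the Mehler kernel separately.

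\emph{Step 2 (kernel at supercritical scale).} By Lemma \ref{lem:K}, the kernel of $(t^{2}L)^{N+1}e^{\frac{(1+a^{2})t^{2}}{\alpha}L}$ is $t^{2(N+1)}(1-e^{-2s})^{-(N+1)}P_{N+1}(e^{-s},\cdot,\cdot)M_{s}(\cdot,\cdot)$. Two facts drive the estimate. (a) Every monomial of $P_{N+1}$ is divisible by $e^{-s}$: this follows by induction from the identities in the proof of Lemma \ref{lem:K}, since $\partial_{s}$ sends each of $e^{-s}$, $\tfrac{e^{-s}y_{j}-z_{j}}{\sqrt{1-e^{-2s}}}$, $\sqrt{1-e^{-2s}}\,y_{j}$ to an expression divisible by $e^{-s}$; consequently
$$t^{2(N+1)}(1-e^{-2s})^{-(N+1)}e^{-s}=\Bigl(\tfrac{t^{2}}{1-e^{-2s}}\Bigr)^{N+1}e^{-s}\lesssim(1+s)^{N+1}e^{-s}\lesssim e^{-s/2}.$$
(b) On the local region $N_{\tau}$ one has $|y-z|\le\tau m(y)$ while $s\gtrsim m(y)^{2}$ by Step 1, so $|e^{-s}y-z|\ge(1-e^{-s})|y|-\tau m(y)\gtrsim(1-e^{-s})|y|$ once $|y|$ exceeds a fixed constant, whence the Mehler Gaussian supplies the enhanced decay $\exp\!\bigl(-c\,\tfrac{(1-e^{-s})^{2}|y|^{2}}{1-e^{-2s}}\bigr)\lesssim\exp(-c'(t|y|)^{2})$. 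The polynomial $P_{N+1}$ contributes on $N_{\tau}$ at most $(1+\sqrt{s}\,|y|)^{C_{N+1}}\lesssim(1+t|y|)^{C_{N+1}}$ (its off-diagonal variables being absorbed into a slightly fattened Gaussian via $\sup_{w}|w|^{k}e^{-cw^{2}}<\infty$), and this is dominated by the decay in (b).

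\emph{Step 3 (conclusion and main obstacle).} The global part $M_{s}1_{N_{\tau}^{c}}$ is handled by Proposition \ref{prop:glob}(i): using Lemma \ref{lem:slow2} one dominates its kernel by $e^{-s/2}M_{s}(y,z)1_{N_{\tau}^{c}}(y,z)$ (up to a fattened Gaussian), converts the supremum over $y\in B(x,a'm(x))$ into a Hardy--Littlewood average restricted to $N_{\tau}^{c}$ by Lemma \ref{lem:pu}, performs the convergent integral $\int_{0}^{\infty}e^{-s/2}\tfrac{dt}{t}<\infty$, and invokes Proposition \ref{prop:glob}(i) to get $\lesssim\|u\|_{1}$ after integrating in $x$ against $\gamma$. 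For the local part $M_{s}1_{N_{\tau}}$, Step 2 shows that for each $t$ it is $\lesssim e^{-c'(t|x|)^{2}}$ times a contribution of a genuinely global (admissible-scale) type; integrating against $\gamma$ — here one uses Fubini, the reflection identity $M_{s}(x,y)e^{-|y|^{2}}=M_{s}(y,x)e^{-|x|^{2}}$, positivity of $(e^{sL})_{s>0}$, $e^{sL}1=1$ and the doubling property Lemma \ref{lem:mm} — and then performing the convergent $\tfrac{dt}{t}$-integral (substitute $\sigma=t|x|$) again yields $\lesssim\|u\|_{1}$; the bounded region $\{|x|\lesssim1\}$ is disposed of crudely using $e^{sL}1=1$ and $e^{-s/2}\lesssim e^{-ct^{2}}$ with $t\gtrsim c_{2}$ there. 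The hard part is exactly this last step: the lower cut-off must be used to cancel the polynomial $|x|$-growth of the kernel uniformly, and simultaneously to keep the $\tfrac{dt}{t}$-integral convergent near $t\sim m(x)$ — which is precisely the interplay of the factor $e^{-s}$ from differentiating the semigroup with the enhanced Gaussian decay $e^{-c'(t|x|)^{2}}$ valid only at supercritical scales. This is the analytic content of the "local Hardy space" phenomenon mentioned after Theorem \ref{thm:main}; a final routine check shows all implied constants depend only on $N,a,a',b,\alpha$.
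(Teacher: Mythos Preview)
Your approach has two genuine gaps. First, Step 1 is incorrect: the function $g(z)=\int_{m(z)/b}^{\infty}(t^{2}L)^{N+1}e^{\frac{(1+a^{2})t^{2}}{\alpha}L}u(z)\,\tfrac{dt}{t}$ carries a $z$-dependent lower limit, so the semigroup law does \emph{not} give $e^{r^{2}L}g(y)=\int_{m(y)/b}^{\infty}(t^{2}L)^{N+1}e^{(r^{2}+s)L}u(y)\,\tfrac{dt}{t}$. The operator $e^{r^{2}L}$ is nonlocal and cannot commute with the cutoff $1_{[m(\cdot)/b,\infty)}(t)$; the comparability $m(y)\sim m(x)$ for $(y,r)\in\Gamma^{a'}_{x}$ says nothing about the integration variable $z$ in $e^{r^{2}L}g(y)=\int M_{r^{2}}(y,z)g(z)\,dz$, where $m(z)$ may be arbitrarily small. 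Second, and more seriously, the $\tfrac{dt}{t}$-integral in your global-part estimate diverges: with $s=\tfrac{(1+a^{2})t^{2}}{\alpha}$ one has $e^{-s/2}\sim 1$ for $t\lesssim 1$, so $\int_{c_{2}m(x)}^{\infty}e^{-s/2}\,\tfrac{dt}{t}\sim\log|x|$ as $|x|\to\infty$ (and your written ``$\int_{0}^{\infty}e^{-s/2}\tfrac{dt}{t}<\infty$'' is simply false). The enhanced decay $e^{-c'(t|x|)^{2}}$ from Step 2(b) is only produced on the \emph{local} region $N_{\tau}$; on $N_{\tau}^{c}$ you have nothing beyond $e^{-s/2}$, and Proposition \ref{prop:glob}(i) gives an $L^{1}(\gamma)$-bound, not a pointwise one, so the logarithmic loss cannot be absorbed.

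The paper circumvents both issues by a device you have missed: integration by parts in the semigroup parameter. Writing the integrand as $s^{N}\partial_{s}^{N+1}e^{sL}u(x)\,ds$ and integrating by parts repeatedly converts the $\tfrac{dt}{t}$-integral into a finite sum of boundary terms $(s^{k}L^{k}e^{sL}u)(x)$, $k=0,\dots,N$, evaluated at $s=\tfrac{(1+a^{2})m(x)^{2}}{b^{2}\alpha}$ (the other endpoint $s\sim M^{2}$ vanishes as $M\to\infty$ via the chaos decomposition). Since this fixed time is at the admissible scale $t\sim m(x)$, Lemma \ref{lem:est}(ii) applies and the kernel is dominated by $M_{cm(x)^{2}}(x,\cdot)$; one then bounds $T^{*}_{a'}$ of $x\mapsto\int M_{cm(x)^{2}}(x,y)|u(y)|\,dy$ by a further local/global split and Proposition \ref{prop:glob}. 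There is no residual integral in $t$, so neither of your two difficulties arises.
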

\begin{proof}
Let $M>1$ and $x \in \R^{n}$. Without loss of generality we assume that $\int u d\gamma =0$ (since $L1=0$).
\begin{equation*}
\begin{split}
|\int \limits _{\frac{m(x)}{b}} ^{M} (t^{2}L)^{N+1}e^{(1+a^{2})\frac{t^{2}}{\alpha}L}u(x) \frac{dt}{t}|
&\lesssim |\int \limits _{\frac{(1+a^{2})m(x)^{2}}{b^{2}\alpha}} ^{\frac{(1+a^{2})M^{2}}{\alpha}} s^{N+1}\partial_{s} ^{N+1}e^{sL}u(x) \frac{ds}{s}|\\
&\lesssim \sum \limits _{k=0} ^{N} \int \limits _{\R^{n}} |K_{(1+a^{2})b^{-2}m(x)^{2},k,\alpha}(x,y)||u(y)|dy
+ \sum \limits _{k=0} ^{N} |(M^{2}L)^{k}e^{\frac{(1+a^{2})M^{2}}{\alpha}L}u(x)|.
\end{split}
\end{equation*}
Given $k \in \{0,...,N\}$ we have, using chaos decomposition and Proposition \ref{prop:glob}:
$$
\|(M^{2}L)^{k}e^{\frac{(1+a^{2})M^{2}}{\alpha}L}u\|_{h^{1}_{\text{max},a'}} \leq \|T^{*}_{a'}(M^{2}L)^{k}e^{\frac{(1+a^{2})M^{2}}{\alpha}L}u\|_{2}
\lesssim \|(M^{2}L)^{k}e^{\frac{(1+a^{2})M^{2}}{\alpha}L}u\|_{2} \leq M^{2k}e^{-\frac{(1+a^{2})M^{2}}{\alpha}}\|u\|_{2} \underset{M\to \infty}{\to} 0.
$$
It thus remains to prove that, given $k \in \{0,...,N\}$,
$$
\|T^{*}_{a'}(\int \limits _{\R^{n}} |K_{(1+a^{2})b^{-2}m(.)^{2},k,\alpha}(x,y)||u(y)|dy)\|_{1} \lesssim \|u\|_{1}.
$$
Using Lemma \ref{lem:est}, the positivity of $(e^{tL})_{t\geq 0}$, and the fact that $e^{L}1=1$, this further reduces to proving
$$
\|T^{*}_{a'}(\int \limits _{\R^{n}} M_{(1+a^{2})b^{-2}m(.)^{2}}(x,y)|u(y)|dy)\|_{1} \lesssim \|u\|_{1}.
$$
We first use Proposition \ref{prop:glob} to obtain
$$
\|T^{*}_{glob,a',1}(\int \limits _{\R^{n}} M_{(1+a^{2})b^{-2}m(.)^{2}}(x,y)|u(y)|dy)\|_{1} \lesssim \int \limits _{\R^{n}} \int \limits _{\R^{n}} M_{(1+a^{2})b^{-2}m(x)^{2}}(x,y)|u(y)|dy d\gamma(x).
$$
We decompose the right hand side into a local and a global part. Let $\tau:= \frac{1}{2}(1+b^{-1}\sqrt{1+a^{2}})(1+2b^{-1}\sqrt{1+a^{2}})$
and $\overline{\tau}=2(1+\sqrt{1+a^{2}}b^{-1})\tau+\sqrt{1+a^{2}}b^{-1}$. For $x,y,z \in \R^{n}$ such that $|x-y| \geq \overline{\tau} m(x)$ and
$|z-x| \leq \frac{\sqrt{1+a^{2}}}{b} m(x)$, we have that $|z-y|\geq \tau m(z)$. Therefore
$$
\int \limits _{\R^{n}} \int \limits _{\R^{n}} M_{(1+a^{2})b^{-2}m(x)^{2}}(x,y)1_{N_{\overline{\tau}}^{c}}(x,y)|u(y)|dy d\gamma(x)
\lesssim \int \limits _{\R^{n}} \underset{(z,t) \in \Gamma ^{b^{-1}\sqrt{1+a^{2}}} _{x}(\gamma)}{\sup}\int \limits _{\R^{n}} M_{t^{2}}(z,y)1_{N_{\tau}^{c}}(z,y)|u(y)|dy d\gamma(x)
\lesssim \|u\|_{1},
$$
by Proposition \ref{prop:glob}.
Now, for $(x,y) \in N_{\overline{\tau}}$, we have $m(x)\sim m(y)$ by Lemma \ref{lem:mnp1}. Therefore
$$
\int \limits _{\R^{n}} \int \limits _{\R^{n}} M_{(1+a^{2})b^{-2}m(x)^{2}}(x,y)1_{N_{\overline{\tau}}}(x,y)|u(y)|dy d\gamma(x)
\lesssim \int \limits _{\R^{n}} m(x)^{-n}\int \limits _{B(x,\overline{\tau} m(x))}|u(y)|dy d\gamma(x).
$$
For $(x,y) \in N_{\overline{\tau}}$, we also have $e^{-|x|^{2}}\sim e^{-|y|^{2}}$, therefore
$$
\int \limits _{\R^{n}} m(x)^{-n}\int \limits _{B(x,\overline{\tau} m(x))}|u(y)|dy d\gamma(x)
\lesssim \int \limits _{\R^{n}} |u(y)|m(y)^{-n}\int \limits _{B(y,\overline{\tau}(1+\overline{\tau})m(y))}d\gamma(x) dy
\lesssim \int \limits _{\R^{n}} |u(y)| e^{-|y|^{2}}dy \lesssim \|u\|_{1}.
$$
The proof will be completed once we have estimated the two following terms.
\begin{equation*}
\begin{split}
J_{glob}:=& \int \limits \underset{(y,t)\in \Gamma_{x} ^{a}}{\sup} \int \limits _{\R^{n}} M_{t^{2}}(y,z) 1_{N_{\tau'}}(y,z)
\int \limits _{\R^{n}} M_{(1+a^{2})b^{-2}m(z)^{2}}(z,w)1_{N_{\tau''}^{c}}(z,w)|u(w)|dwdzd\gamma(x),\\
J_{loc}:=& \int \limits \underset{(y,t)\in \Gamma_{x} ^{a}}{\sup} \int \limits _{\R^{n}} M_{t^{2}}(y,z) 1_{N_{\tau'}}(y,z)
\int \limits _{\R^{n}} M_{(1+a^{2})b^{-2}m(z)^{2}}(z,w)1_{N_{\tau''}}(z,w)|u(w)|dwdzd\gamma(x),
\end{split}
\end{equation*}
where $\tau'$ is defined in Proposition \ref{prop:glob} for the parameters $(1,a')$, and $\tau''$ is defined as follows.
For $(x,y) \in N_{a}$ and $(y,z) \in N_{\tau'}$, we have $m(x)\sim m(y) \sim m(z)$ by Lemma \ref{lem:mnp1}.
Let $\lambda>0$ be such that $\lambda^{-1} m(x) \leq m(z) \leq \lambda m(x)$, and fix 
$\tau''$ as in Proposition \ref{prop:glob}, for the parameters $(\tilde{A},\tilde{a}) = ((2\tau'(1+a)+a)b/(\lambda\sqrt{1+a^{2}}),\sqrt{1+a^{2}}b^{-1}\lambda)$. 
Using Proposition \ref{prop:glob}, the positivity of $(e^{tL})_{t\geq 0}$, and the fact that $e^{L}1=1$, we have that
\begin{equation*}
\begin{split}
J_{glob} &\lesssim \int \limits \underset{(y,t)\in \Gamma_{x} ^{a}}{\sup} \int \limits _{\R^{n}} M_{t^{2}}(y,z) 1_{N_{\tau'}}(y,z)
\underset{(\eta,s)\in \Gamma_{x} ^{(\tilde{A},\tilde{a})}(\gamma)}{\sup}\int \limits _{\R^{n}} M_{s^{2}}(\eta,w)1_{N_{\tau''}^{c}}(\eta,w)|u(w)|dwdzd\gamma(x)\\
&\lesssim \int \limits \underset{(\eta,s)\in \Gamma_{x} ^{(\tilde{A},\tilde{a})}(\gamma)}{\sup}\int \limits _{\R^{n}} M_{s^{2}}(\eta,w)1_{N_{\tau''}^{c}}(\eta,w)|u(w)|dwd\gamma(x)\\
&\lesssim \|u\|_{1}, 
\end{split}
\end{equation*}
Finally, for $(x,y) \in N_{a}$, $(y,z) \in N_{\tau'}$, and $(z,w) \in N_{\tau''}$, we have $m(x)\sim m(y) \sim m(z) \sim m(w)$,
 $|w-x| \leq \lambda m(x)$
for some numerical constant $\lambda>0$ by Lemma \ref{lem:mnp1}, and $e^{-|w|^{2}}\sim e^{-|x|^{2}}$. Let $C>0$ be such that $m(x) \leq Cm(w)$. 
Using  the positivity of $(e^{tL})_{t\geq 0}$, and the fact that $e^{L}1=1$, we have that
\begin{equation*}
\begin{split}
J_{loc}\lesssim& \int \limits \underset{(y,t)\in \Gamma_{x} ^{a}}{\sup} \int \limits _{\R^{n}} M_{t^{2}}(y,z) 1_{N_{\tau'}}(y,z)
m(x)^{-n}\int \limits _{B(x,\lambda m(x))} |u(w)|dwdzd\gamma(x) \\
&\lesssim \int \limits m(x)^{-n}\int \limits _{B(x,\lambda m(x))} |u(w)|dwd\gamma(x)
\lesssim  \int \limits  |u(w)| m(w)^{-n}\int \limits _{B(w,C\lambda m(w))}d\gamma(x)dw\\
&\lesssim \int \limits  |u(w)| e^{-|w|^{2}}dw \lesssim \|u\|_{1}.
\end{split}
\end{equation*}
\end{proof}

\section{Riesz transforms}
\label{sect:riesz}
In this section, we prove the following boundedness result for the Riesz transforms associated with $L$.
Let $M: L^{2}(\R^{n},d\gamma) \to L^{2}(\R^{n},d\gamma)$ be defined by $MH_{\alpha} = |\alpha|^{-\frac{1}{2}}H_{\alpha}$
for all $\alpha \in \Z^{n}_{+} \backslash \{0\}$, and $MH_{0} = 0$. 
\begin{theorem}
\label{thm:riesz}
For all $k=1,..,n$, the Riesz transforms 
$$
R_{k} = \partial_{x_{k}} M, \quad S_{k} = \partial^{*} _{x_{k}}M, 
$$
extend to bounded operators from $h^{1}(\gamma)$ to $L^{1}(\gamma)$.
\end{theorem}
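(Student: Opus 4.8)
The plan is to feed the decomposition of Corollary \ref{cor:dec} into $R_k$ and $S_k$ and re-run the estimates of Sections 4 and 5 with one extra derivative. Since $h^{1}(\gamma)=h^{1}_{\text{quad},2}(\gamma)$ and $C_c^{\infty}(\R^{n})$ is dense, it suffices to prove $\|R_{k}u\|_{L^{1}(\gamma)}\lesssim\|u\|_{h^{1}_{\text{quad},2}(\gamma)}$ for $u\in C_c^{\infty}(\R^{n})$, and likewise with $S_{k}$. Fix $a=2$ and $N>n/4$, $\alpha$, $b$ as in the proof of Theorem \ref{thm:main}, and apply Corollary \ref{cor:dec}. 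Because $MH_{0}=0$, both $R_{k}$ and $S_{k}$ annihilate constants, so the term $\int u\,d\gamma$ drops out and $R_{k}u$ becomes $-C\sum_{j,m}\lambda_{m,j}R_{k}\mathcal{M}_{m,j}$ plus $R_{k}$ applied to the three remainder terms, where $\mathcal{M}_{m,j}:=\int_{0}^{2}(t^{2}L)^{N}e^{\frac{t^{2}}{\alpha}L}t\partial_{x_{j}}^{*}F_{m,j}(t,\cdot)\frac{dt}{t}$ is, by Proposition \ref{prop:mol} (the atom $F_{m,j}$ being supported in $t\leq r_{B_{m,j}}\leq 2$), a $(2,N,2^{-23}\alpha)$-molecule, and $\sum_{j,m}|\lambda_{m,j}|\lesssim\|u\|_{h^{1}_{\text{quad},2}(\gamma)}$. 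Thus the theorem reduces to: (i) $\|R_{k}f\|_{L^{1}(\gamma)}\lesssim 1$ and $\|S_{k}f\|_{L^{1}(\gamma)}\lesssim 1$ uniformly over $(2,N,C)$-molecules $f$, for $N>n/4$ and $C$ large; and (ii) $\|R_{k}(\text{third remainder})\|_{L^{1}(\gamma)}\lesssim 1$, $\|R_{k}(\text{fourth remainder})\|_{L^{1}(\gamma)}\lesssim\|u\|_{L^{1}(\gamma)}$, $\|R_{k}(\text{fifth remainder})\|_{L^{1}(\gamma)}\lesssim\|u\|_{L^{1}(\gamma)}$, and the same with $S_{k}$.

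The heart of the matter is the molecular bound (i). A molecule $f$ (here $N\geq 1$) satisfies $\int f\,d\gamma=0$, because $f=L^{N}\tilde f$ and $L^{N}\mathbf 1=0$; hence the subordination identity $(-L)^{-1/2}=\frac1{\sqrt\pi}\int_{0}^{\infty}s^{-1/2}e^{sL}\,ds$ (valid on the orthocomplement of the constants) gives $R_{k}f=c\int_{0}^{\infty}t\,\partial_{x_{k}}e^{t^{2}L}f\,\frac{dt}{t}$ and $S_{k}f=c\int_{0}^{\infty}t\,\partial_{x_{k}}^{*}e^{t^{2}L}f\,\frac{dt}{t}$, whose integrands have kernels $t\,\partial_{x_{k}}M_{t^{2}}(x,y)$ and its transpose times $\exp(|x|^{2}-|y|^{2})$, i.e. exactly the objects controlled in Section 3 via Corollary \ref{cor:Ktilde} and Lemmas \ref{lem:slow2} and \ref{lem:est} with $N=0$, as long as $t\lesssim m(\cdot)$. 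Writing $B=B(c_{B},r_{B})\in\mathcal{B}_{2}$ for the ball of $f$, split the $t$-integral at $r_{B}$. For $t\leq r_{B}$ one is at the local scale and runs the scheme of the proof of Theorem \ref{thm:mol}: decompose $\R^{n}$ into the annuli $C_{k}(B)$, bound the $C_{k}(B)$-contribution of $1_{C_{l}(B)}f$ by $\sqrt{\gamma(2^{k+1}B)}$ times its $L^{2}$-norm, gaining an off-diagonal factor $\exp(-c\,4^{\max(k,l)}(r_{B}/t)^{2})$ when $|k-l|\geq 2$ from the kernel estimates of Section 3 (and merely the $L^{2}(\gamma)$-boundedness of the Riesz transforms when $|k-l|<2$), splitting further into $N_{\tau}(B)$ and its complement where off-diagonal decay fails and using Lemma \ref{lem:region} for the volume $\lesssim 2^{kn}\gamma(B)$, and summing the resulting $\exp(2^{9}4^{k}-C4^{l})$ and $4^{-k(N-n/4)}e^{-C4^{l}}$, which converge for $N>n/4$ and $C$ (hence $\alpha$) large by Lemma \ref{lem:mm}. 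For $t\geq r_{B}$ one writes $t\partial_{x_{k}}e^{t^{2}L}f=t^{-2N}(t^{2}L)^{N}\,t\partial_{x_{k}}e^{t^{2}L}\tilde f$, so that $t^{-2N}\leq r_{B}^{-2N}$ cancels the factor $r_{B}^{2N}$ in the molecular bound on $\tilde f$, the $t$-integral converges because $\int_{r_{B}}^{\infty}(t^{2}|\beta|)^{N+\frac12}e^{-t^{2}|\beta|}\frac{dt}{t}$ is uniformly bounded on every Hermite component, and the decay away from $B$ is supplied by the global Mehler kernel estimates exactly as in Propositions \ref{prop:jinf} and \ref{prop:Dcomp}. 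The treatment of $S_{k}$ is identical, $\partial_{x_{k}}^{*}$ only contributing the transpose kernel with the weight $\exp(|x|^{2}-|y|^{2})$ already built into $\tilde K$ in Section 3.

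The remainder estimates (ii) follow the templates of Propositions \ref{prop:r1}, \ref{prop:Dcomp} and \ref{prop:jinf} respectively, now carrying the extra scale-$t'$ operator $t'\partial_{x_{k}}e^{t'^{2}L}$ (or $t'\partial_{x_{k}}^{*}e^{t'^{2}L}$) supplied by subordination; composing it with the operator defining the relevant remainder produces, by Corollary \ref{cor:Ktilde}, a kernel of the same controllable form $t'\,t^{2N+2}(1-e^{-2s})^{-(N+3/2)}Q(\cdots)M_{s}(x,y)$ at the combined scale $s$. The mechanism is then the same throughout: outside the local region one bounds $|t'\partial_{x_{k}}M_{t'^{2}}(x,y)|$ by $\exp\!\big(-c\,|e^{-t'^{2}}x-y|^{2}/(1-e^{-2t'^{2}})\big)M_{t'^{2}}(x,y)$, a $w\mapsto we^{-cw^{2}}$ bound on the polynomial factor of Corollary \ref{cor:Ktilde}, and then, using the positivity of $(e^{sL})_{s\geq 0}$ and $e^{sL}\mathbf 1=\mathbf 1$, reduces the $L^{1}$ (indeed $L^{\infty}$) estimate to precisely the computations of Section 5; inside the local region one invokes Lemma \ref{lem:mnp1} to see that all admissibility functions are comparable and closes as there. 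Combining (i) and (ii), $\|R_{k}u\|_{L^{1}(\gamma)}\lesssim\sum_{j,m}|\lambda_{m,j}|+\|u\|_{L^{1}(\gamma)}\lesssim\|u\|_{h^{1}_{\text{quad},2}(\gamma)}=\|u\|_{h^{1}(\gamma)}$, and similarly for $S_{k}$, which proves the theorem.

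The genuine obstacle is step (i) in the regime $t\geq r_{B}$: this is where $R_{k}$ sees the non-doubling geometry, since for $t\gtrsim m(\cdot)$ the $L^{2}$ off-diagonal estimates of Lemma \ref{lem:od} are unavailable and one must instead exploit the precise form of the Mehler kernel, just as $J_{\infty}$ was the delicate term in the proof of Theorem \ref{thm:main}. Concretely, for $x\in C_{k}(B)$ one must show that $\int_{r_{B}}^{\infty}t\partial_{x_{k}}e^{t^{2}L}(1_{C_{l}(B)}\tilde f)(x)\frac{dt}{t}$ decays fast enough in $2^{k}$ to beat the measure growth $\sqrt{\gamma(2^{k+1}B)}\sim e^{c'4^{k}}$ of Lemma \ref{lem:mm}, which is what forces $\alpha$ (and $C$) to be taken large, exactly as in the remark following Lemma \ref{lem:od}.
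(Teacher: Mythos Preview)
Your approach differs from the paper's in an important structural way, and there is a genuine gap.

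\textbf{How the paper proceeds.} The paper does \emph{not} decompose $u$ via Corollary~\ref{cor:dec} and then apply $R_k$ to the pieces. Instead it writes a tailored reproducing formula for $R_k u$ itself,
\[
R_{k}u = C\int_{0}^{\infty} t\partial_{x_k}(t^{2}L)^{N+1}e^{\frac{5t^{2}}{\alpha}L}u\,\frac{dt}{t},
\]
and then runs the decomposition of Corollary~\ref{cor:dec} on this expression. The upshot is that the main term becomes $\int_{0}^{m(x)/b}t\partial_{x_k}(t^{2}L)^{N}e^{t^{2}L/\alpha}t\partial_{x_j}^{*}F(t,\cdot)\,\frac{dt}{t}$, which is bounded directly in $L^{1}(\gamma)$ by a single kernel estimate (the analogue of Lemma~\ref{lem:est} with the extra $t\partial_{x_k}$, yielding a harmless factor $(1+t|x|)\lesssim 2^{l}$ on $C_{l}(B)$) together with the off-diagonal scheme of Lemma~\ref{lem:od}. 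No molecular theory is invoked, no $h^{1}_{\max}$ norm is estimated, and there is \emph{no large-$t$ regime} for the main term at all: the atom $F$ lives in $t\le r_{B}$, and the integral in $t$ stops there. The remainder terms are then handled exactly as in Section~5 with the extra derivative absorbed into the kernel.

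\textbf{The gap in your argument.} Your treatment of the regime $t\ge r_{B}$ asserts
\[
t\partial_{x_k}e^{t^{2}L}f = t^{-2N}(t^{2}L)^{N}\,t\partial_{x_k}e^{t^{2}L}\tilde f,
\]
which would require $\partial_{x_k}$ to commute with $L^{N}$. It does not: $[\partial_{x_k},L]=-\partial_{x_k}$, so $\partial_{x_k}L^{N}=(L-I)^{N}\partial_{x_k}$, and commuting $\partial_{x_k}$ past $L^{N}e^{t^{2}L}$ lands a derivative on $\tilde f$, for which the molecular hypotheses give no control. One could instead estimate the kernel of $t\partial_{x_k}L^{N}e^{t^{2}L}$ directly via Corollary~\ref{cor:Ktilde}, but that kernel carries factors of $\sqrt{1-e^{-2t^{2}}}\,x_{j}$ that are only bounded when $t\lesssim m(x)$, which is precisely what fails for $t\ge r_{B}$ and $x$ far from $B$; you would then need the full $I'_{k,l}/I''_{k,l}$ split of Theorem~\ref{thm:mol} again, with additional care for the unbounded $t$-range. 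Likewise, applying $R_{k}$ via subordination to the remainder terms of Corollary~\ref{cor:dec} produces double $t$-integrals interacting with the $x$-dependent cutoff $1_{[m(x)/b,2]}(t)$, and ``the templates of Propositions~\ref{prop:r1}--\ref{prop:jinf}'' do not cover this without substantial further work.

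The paper's device of building $t\partial_{x_k}$ into the reproducing formula sidesteps all of this: the derivative always appears paired with the same scale $t$ as the rest of the integrand, and the entire $t\ge r_{B}$ issue for the main term disappears.
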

Recall that $h^{1}(\gamma):=h^{1}_{\text{quad},2}(\gamma)$.
The proof of this theorem follows the approach of the preceding sections. We start with an appropriate Calder\'on reproducing formula, which can be established through chaos expansion. 
\begin{lemma}
For all $N \in \N$, $k \in \{1,...,n\}$, and $a,\alpha>0$, there exists $C>0$ such that for all $u \in L^{2}(\gamma)$
\begin{equation*}
\begin{split}
u &= C \int \limits _{0} ^{\infty} (t^{2}L)^{N+\frac{3}{2}}e^{\frac{5t^{2}}{\alpha}L}u \frac{dt}{t},\\
R_{k}u &= C \int \limits _{0} ^{\infty} t\partial_{x_{k}}(t^{2}L)^{N+1}e^{\frac{5t^{2}}{\alpha}L}u \frac{dt}{t}, 
\quad S_{k}u = C \int \limits _{0} ^{\infty} t\partial_{x_{k}}^{*}(t^{2}L)^{N+1}e^{\frac{5t^{2}}{\alpha}L}u \frac{dt}{t}.
\end{split}
\end{equation*}
\end{lemma}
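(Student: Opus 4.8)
The plan is to verify all three identities on the orthonormal basis of Hermite polynomials $(H_\alpha)_{\alpha\in\Z_+^n}$ and then pass to the improper limit by dominated convergence in $\ell^2(\Z_+^n)$. Write $u=\sum_\alpha c_\alpha H_\alpha$ with $\sum_\alpha|c_\alpha|^2<\infty$, and recall $e^{tL}H_\alpha=e^{-t|\alpha|}H_\alpha$. A fractional power $(t^2L)^\beta$ with $\beta>0$ is read spectrally, so that $(t^2L)^\beta H_\alpha=\varepsilon_\beta\,(t^2|\alpha|)^\beta H_\alpha$ for a fixed unimodular constant $\varepsilon_\beta$ (a choice of branch of $(-1)^\beta$), which is harmlessly absorbed into $C$. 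The computational engine common to all three identities is the substitution $s=t^2|\alpha|$, valid for $\alpha\neq0$, which turns every $t$-integral into a Gamma integral: for $\beta>0$,
\[
\int_0^\infty (t^2|\alpha|)^{\beta}e^{-\frac{5t^2|\alpha|}{\alpha}}\,\frac{dt}{t}
=\tfrac12\int_0^\infty s^{\beta}e^{-5s/\alpha}\,\frac{ds}{s}
=\tfrac12\,\Gamma(\beta)\Big(\tfrac{\alpha}{5}\Big)^{\beta},
\]
a finite nonzero constant \emph{independent of the index $\alpha$}.

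First I would treat the scalar identity. Applying the operator on the right to $H_\alpha$ with $\alpha\neq0$ and using the display with $\beta=N+\tfrac32$ gives $\varepsilon_{N+3/2}\,\tfrac12\Gamma(N+\tfrac32)(\alpha/5)^{N+3/2}H_\alpha$; for $\alpha=0$ it gives $0$, since $\beta>0$. Hence, choosing $C$ to be the reciprocal of $\varepsilon_{N+3/2}\,\tfrac12\Gamma(N+\tfrac32)(\alpha/5)^{N+3/2}$, the right-hand side reproduces the component of $u$ orthogonal to the constants, i.e.\ $u-\int_{\R^n}u\,d\gamma$. For the use of this formula in the present section this suffices, since the Riesz transforms $R_k,S_k$ annihilate constants; one may add the harmless term $\int_{\R^n}u\,d\gamma$ if the identity is wanted verbatim for all $u$.

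Next, for $R_k$ and $S_k$, note that $\partial_{x_k}$ and $\partial_{x_k}^*$ commute with the scalar action of $(t^2L)^{N+1}e^{5t^2L/\alpha}$ on eigenspaces, so applying the right-hand side of the $R_k$-formula to $H_\alpha$ (for $\alpha\neq0$) yields
\[
C\,(-1)^{N+1}\Big(\int_0^\infty t^{2N+2}|\alpha|^{N+1}e^{-\frac{5t^2|\alpha|}{\alpha}}\,dt\Big)\,\partial_{x_k}H_\alpha
=C\,(-1)^{N+1}\tfrac12\,\Gamma(N+\tfrac32)\Big(\tfrac{\alpha}{5}\Big)^{N+3/2}|\alpha|^{-1/2}\,\partial_{x_k}H_\alpha,
\]
where the substitution above is used with the bookkeeping $t^{2N+2}\,dt\leftrightarrow (2|\alpha|^{N+3/2})^{-1}s^{N+1/2}\,ds$. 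Since $R_kH_\alpha=\partial_{x_k}MH_\alpha=|\alpha|^{-1/2}\partial_{x_k}H_\alpha$, taking $C=2\cdot5^{N+3/2}\big((-1)^{N+1}\Gamma(N+\tfrac32)\alpha^{N+3/2}\big)^{-1}$ matches the two sides on every $H_\alpha$ with $\alpha\neq0$, and both sides vanish on $H_0$; the same $C$ works for $S_k$ with $\partial_{x_k}$ replaced by $\partial_{x_k}^*$.

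Finally I would justify convergence of the improper $t$-integrals in $L^2(\gamma)$ together with the interchange with the Hermite sum. For the scalar identity this is immediate: the coefficient at $H_\alpha$ of $\int_\delta^M(t^2L)^{N+3/2}e^{5t^2L/\alpha}u\,\frac{dt}{t}$ equals $c_\alpha\,\varepsilon_{N+3/2}\tfrac12\int_{\delta^2|\alpha|}^{M^2|\alpha|}s^{N+1/2}e^{-5s/\alpha}\,ds$, bounded in modulus by $|c_\alpha|$ times the full Gamma value uniformly in $\alpha,\delta,M$ and converging pointwise in $\alpha$ as $\delta\to0$, $M\to\infty$, so dominated convergence in $\ell^2(\Z_+^n)$ applies. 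For $R_k,S_k$ the analogous truncated coefficient, now multiplying $\partial_{x_k}H_\alpha$, is $\lesssim|c_\alpha|\,|\alpha|^{-1/2}$ uniformly; combined with $\|\partial_{x_k}H_\alpha\|_{L^2(\gamma)}\lesssim|\alpha|^{1/2}$ (equivalently $\sum_k\|\partial_{x_k}H_\alpha\|_{L^2(\gamma)}^2=2|\alpha|$), the $\ell^2$-summand is dominated by a constant times $|c_\alpha|^2$, and dominated convergence again applies. This last point is really the $L^2$-boundedness of the Riesz transforms in disguise, and, together with the treatment of the constant ($H_0$) component noted above, is the only genuine subtlety; everything else is bookkeeping with the Gamma integral.
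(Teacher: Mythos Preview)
Your proof is correct and is precisely the chaos-expansion argument the paper indicates (the paper does not spell out a proof, merely saying the lemma ``can be established through chaos expansion''). Your observation that the first identity only reproduces $u-\int u\,d\gamma$ is accurate and does not affect the application to $R_k,S_k$; the only slightly loose phrasing is that $\partial_{x_k}$ does not literally commute with $L$, but your computation is right because $(t^2L)^{N+1}e^{5t^2L/\alpha}$ acts as a scalar on each $H_\alpha$ before $\partial_{x_k}$ is applied.
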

In what follows, $k \in \{1,...,n\}$ is fixed. With the same proof as Corollary \ref{cor:dec}, we get the following.
\begin{corollary}
For all $N \in \N$, $b>0$, and $\alpha>4$, there exists $C>0$ and $n$ sequences of atoms  $(F_{m,j})_{m\in \N}$ and complex numbers 
$(\lambda_{m,j})_{m\in\N}$ for $j=1,...n$, such that for all 
$u \in C_{c}^{\infty}(\R^{n})$ and $x\in \R^{n}$:
\begin{equation*}
\begin{split}
-R_{k}u(x) = 
& C \sum \limits _{j=1} ^{n} \sum \limits _{m=1} ^{\infty} \lambda_{m,j} \int \limits _{0} ^{\frac{m(x)}{b}} 
t\partial_{k}(t^{2}L)^{N}e^{\frac{t^{2}}{\alpha}L}t\partial_{x_{j}}^{*}F_{m,j}(t,x)\frac{dt}{t}
\\&+C \sum \limits _{j=1} ^{n} \int \limits _{0} ^{\frac{m(x)}{b}} t\partial_{k}(t^{2}L)^{N}e^{\frac{t^{2}}{\alpha}L}t\partial_{x_{j}}^{*}(1_{D^{c}}(t,.)t\partial_{x_{j}}e^{\frac{4t^{2}}{\alpha}L})u(x)\frac{dt}{t}+ C \int \limits _{\frac{m(x)}{b}} ^{\infty} t\partial_{k}(t^{2}L)^{N+1}e^{\frac{5t^{2}}{\alpha}L}u(x) \frac{dt}{t},
\end{split}
\end{equation*}
and $ \sum \limits _{j=1} ^{n} \sum \limits _{m=1} ^{\infty} |\lambda_{m,j}| \lesssim \|u\|_{h^{1}_{\text{quad},2}}$. 
\end{corollary}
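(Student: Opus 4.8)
The plan is to repeat the proof of Corollary~\ref{cor:dec} almost verbatim, with Lemma~\ref{lem:repro} replaced by the Riesz reproducing formula $R_{k}u = C\int_{0}^{\infty}t\partial_{x_{k}}(t^{2}L)^{N+1}e^{\frac{5t^{2}}{\alpha}L}u\,\frac{dt}{t}$ and with admissibility parameter $a=2$, so that the condition $\alpha>a^{2}$ of Corollary~\ref{cor:dec} becomes $\alpha>4$. The algebraic input is the identity behind that decomposition: since $\partial_{x_{j}}^{*}g=-\partial_{x_{j}}g+2x_{j}g$ one has $\sum_{j=1}^{n}\partial_{x_{j}}^{*}\partial_{x_{j}}=-2L$, hence $t^{2}L=-\tfrac12\sum_{j}(t\partial_{x_{j}}^{*})(t\partial_{x_{j}})$, and, writing $e^{\frac{5t^{2}}{\alpha}L}=e^{\frac{t^{2}}{\alpha}L}e^{\frac{4t^{2}}{\alpha}L}$ and using that $L$ commutes with its semigroup,
\[
(t^{2}L)^{N+1}e^{\frac{5t^{2}}{\alpha}L}=-\tfrac12\sum_{j=1}^{n}(t^{2}L)^{N}e^{\frac{t^{2}}{\alpha}L}\,t\partial_{x_{j}}^{*}\big(t\partial_{x_{j}}e^{\frac{4t^{2}}{\alpha}L}\big).
\]
This is the exact analogue of the first line in the proof of Corollary~\ref{cor:dec} with $a^{2}$ replaced by $4$; the exponent $5=1+4$ in the reproducing formula is precisely what leaves one unit of heat budget on the outer factor and four on the inner one. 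Applying $t\partial_{x_{k}}$ on the left and writing $1=1_{D}(t,\cdot)+1_{D^{c}}(t,\cdot)$ in front of $t\partial_{x_{j}}e^{\frac{4t^{2}}{\alpha}L}u$ splits $-R_{k}u$ into a ``$D$-part'' and a ``$D^{c}$-part''.

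Next I would verify, exactly as in Corollary~\ref{cor:dec}, that for each $j$ the function $(t,x)\mapsto 1_{D}(t,x)\,t\partial_{x_{j}}e^{\frac{4t^{2}}{\alpha}L}u(x)$ lies in $t^{1,2}(\gamma)$ with norm $\lesssim\|u\|_{h^{1}_{\text{quad},2}}$: the change of variables $t=\sqrt{\alpha}s$, Lemma~\ref{lem:mm}, the change-of-aperture estimate \cite[Theorem~3.8]{mnp}, and the hypothesis $\alpha>4$ reduce it to $\|u\|_{h^{1}_{\text{quad},2}}$ word for word. Theorem~\ref{thm:atomic} applied with $a=2$ then produces $2$-atoms $(F_{m,j})_{m}$ and scalars $(\lambda_{m,j})_{m}$ with $1_{D}(t,\cdot)\,t\partial_{x_{j}}e^{\frac{4t^{2}}{\alpha}L}u=\sum_{m}\lambda_{m,j}F_{m,j}$ in $t^{1,2}(\gamma)$ and $\sum_{j=1}^{n}\sum_{m}|\lambda_{m,j}|\lesssim\|u\|_{h^{1}_{\text{quad},2}}$, which is the asserted bound on the coefficients.

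It then remains to split the outer integral as $\int_{0}^{\infty}=\int_{0}^{m(x)/b}+\int_{m(x)/b}^{\infty}$ and collect the pieces. On $(0,m(x)/b)$ the $D$-part, after substituting the atomic decomposition and pulling the absolutely convergent series out of the integral and out of the operators, produces the first displayed term, and the $D^{c}$-part produces the second. On $(m(x)/b,\infty)$ one recombines $1_{D}+1_{D^{c}}=1$ and uses $\sum_{j}t\partial_{x_{j}}^{*}t\partial_{x_{j}}=-2t^{2}L$ to fold the two pieces back into a constant multiple of $\int_{m(x)/b}^{\infty}t\partial_{x_{k}}(t^{2}L)^{N+1}e^{\frac{5t^{2}}{\alpha}L}u\,\frac{dt}{t}$, which is the third term. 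The numerical factors and signs (including the $-\tfrac12$ above) are absorbed into the constant $C$ and, where needed, into the $\lambda_{m,j}$; the interchange of the series with the $t$-integral, and of $t\partial_{x_{k}}$ with $(t^{2}L)^{N}e^{\frac{t^{2}}{\alpha}L}$, is legitimate for $u\in C_{c}^{\infty}(\R^{n})$ just as in Corollary~\ref{cor:dec}.

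I do not expect a genuinely new obstacle: the argument is that of Corollary~\ref{cor:dec}, and the only step that is not pure bookkeeping, namely the membership $1_{D}(t,\cdot)\,t\partial_{x_{j}}e^{\frac{4t^{2}}{\alpha}L}u\in t^{1,2}(\gamma)$ together with its norm estimate, is literally the computation already performed there, now read with $a=2$. The companion decomposition of $S_{k}u$ follows in the same way, with $\partial_{x_{k}}$ replaced throughout by $\partial_{x_{k}}^{*}$ and using the corresponding reproducing formula.
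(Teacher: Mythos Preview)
Your proposal is correct and matches the paper's approach exactly: the paper simply states that the corollary follows ``with the same proof as Corollary~\ref{cor:dec}'', and what you have written is precisely that proof specialised to $a=2$ and to the Riesz reproducing formula, including the identity $(t^{2}L)^{N+1}e^{\frac{5t^{2}}{\alpha}L}=-\tfrac12\sum_{j}(t^{2}L)^{N}e^{\frac{t^{2}}{\alpha}L}\,t\partial_{x_{j}}^{*}\big(t\partial_{x_{j}}e^{\frac{4t^{2}}{\alpha}L}\big)$, the $1_{D}+1_{D^{c}}$ splitting, and the tent-space membership of $1_{D}(t,\cdot)\,t\partial_{x_{j}}e^{\frac{4t^{2}}{\alpha}L}u$. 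The only cosmetic difference is that you cut the $t$-integral directly at $m(x)/b$, whereas Corollary~\ref{cor:dec} writes the atomic piece as $\int_{0}^{2}$ minus a $1_{[m(x)/b,2]}$-correction; for the present statement your direct split is exactly what is required.
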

The same result holds for $S_{k}u$ (replacing $\partial_{x_{k}}$ by its adjoint). 
Theorem \ref{thm:riesz} will be proven, once we have obtained the following three estimates (and their analogues for $\partial_{x_{k}}^{*}$ instead of $\partial_{x_{k}}$).
$$\|\int \limits _{0} ^{\frac{m(.)}{b}} 
t\partial_{k}(t^{2}L)^{N}e^{\frac{t^{2}}{\alpha}L}t\partial_{x_{j}}^{*}F(t,.)\frac{dt}{t}\|_{L^{1}(\gamma)}
\lesssim 1,$$
for all $t^{1,2}(\gamma)$ atoms $F$.  
$$
\|\int \limits _{0} ^{\frac{m(.)}{b}} t\partial_{k}(t^{2}L)^{N}e^{\frac{t^{2}}{\alpha}L}t\partial_{x_{j}}^{*}(1_{D^{c}}(t,.)t\partial_{x_{j}}e^{\frac{4t^{2}}{\alpha}L})u\frac{dt}{t}\|_{L^{1}(\gamma)} \lesssim \|u\|_{L^{1}(\gamma)}.
$$
$$
\|\int \limits _{\frac{m(.)}{b}} ^{\infty} t\partial_{k}(t^{2}L)^{N+1}e^{\frac{5t^{2}}{\alpha}L}u\frac{dt}{t}
\|_{L^{1}(\gamma)} \lesssim \|u\|_{L^{1}(\gamma)}.
$$
We start with the relevant kernel estimate.
 \begin{lemma}
 \label{lem:RTest}
Let $N \in \Z_{+}$, $j \in \{1,...,n\}$,  and $\alpha \geq 4e^{8}$. Let $x,y \in \R^{n}$ and $t\in (0,a]$.  
If $t \lesssim m(y)$ then $|t\partial_{x_{k}}\tilde{K}_{t^{2},N,\alpha,j}(x,y)|\lesssim (1+t|x|)
 \exp(-\frac{\alpha}{4e^{8}}\frac{|e^{-t^{2}}y-x|^{2}}{1-e^{-2t^{2}}})M_{t^{2}}(x,y)$.
 \end{lemma}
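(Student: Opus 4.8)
The plan is to follow the template of Section~3: compute the kernel of the operator $u\mapsto t\partial_{x_k}(t^2L)^Ne^{\frac{t^2}{\alpha}L}t\partial_{x_j}^*u$ explicitly, recognise it as a polynomial factor times a Mehler kernel exactly as in Lemma~\ref{lem:K} and Corollary~\ref{cor:Ktilde}, and then run the estimates behind Lemma~\ref{lem:est}(iii) almost verbatim.

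Concretely, recall $\tilde K_{t^2,N,\alpha,j}(x,y)=t^{2N+1}\big(\partial_{y_j}\partial_s^NM_s(y,x)\big)\big|_{s=t^2/\alpha}\,e^{|x|^2-|y|^2}$, and that Corollary~\ref{cor:Ktilde}, applied with the two arguments of $M_s$ interchanged, gives
\[
\partial_{y_j}\partial_s^NM_s(y,x)=(1-e^{-2s})^{-(N+\frac12)}Q_N\Big(e^{-s},\big(\tfrac{e^{-s}y_i-x_i}{\sqrt{1-e^{-2s}}}\big)_i,\big(\sqrt{1-e^{-2s}}\,y_i\big)_i\Big)M_s(y,x).
\]
Differentiating once more in $x_k$ and using $\partial_{x_k}\big(\tfrac{e^{-s}y_i-x_i}{\sqrt{1-e^{-2s}}}\big)=-\tfrac{\delta_{ik}}{\sqrt{1-e^{-2s}}}$ together with $\partial_{x_k}M_s(y,x)=\tfrac{2}{\sqrt{1-e^{-2s}}}\tfrac{e^{-s}y_k-x_k}{\sqrt{1-e^{-2s}}}M_s(y,x)$, the same induction as in Lemma~\ref{lem:K} produces a polynomial $R_N=R_{N,j,k}$ of $2n+1$ variables with
\[
\partial_{x_k}\partial_{y_j}\partial_s^NM_s(y,x)=(1-e^{-2s})^{-(N+1)}R_N\Big(e^{-s},\big(\tfrac{e^{-s}y_i-x_i}{\sqrt{1-e^{-2s}}}\big)_i,\big(\sqrt{1-e^{-2s}}\,y_i\big)_i\Big)M_s(y,x).
\]
The book-keeping point is that no variable outside the three families of Lemma~\ref{lem:K} is created, and the third family still involves only $y$. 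Hence, with $s=t^2/\alpha$, the product rule yields
\[
t\partial_{x_k}\tilde K_{t^2,N,\alpha,j}(x,y)=t^{2N+2}\big(\partial_{x_k}\partial_{y_j}\partial_s^NM_s(y,x)\big)\big|_{s=\frac{t^2}{\alpha}}e^{|x|^2-|y|^2}+2tx_k\,\tilde K_{t^2,N,\alpha,j}(x,y).
\]

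The second term I would estimate by $|2tx_k|\le2t|x|$ and Lemma~\ref{lem:est}(iii) (available since $t\lesssim m(y)$ and $\alpha$ is large enough, its decay exponent $\tfrac{\alpha}{4e^{2a^2}}$ dominating $\tfrac{\alpha}{4e^{8}}$), which bounds it by $t|x|\exp\!\big(-\tfrac{\alpha}{4e^{8}}\tfrac{|e^{-t^2}y-x|^2}{1-e^{-2t^2}}\big)M_{t^2}(x,y)$; this is exactly the source of the factor $1+t|x|$. For the first term I would repeat the proof of Lemma~\ref{lem:est}(iii): since $t\lesssim m(y)$ the third-family arguments $\sqrt{1-e^{-2s}}\,y_i$ of $R_N$ are $O(1)$, so using $\sup_w(1+|w|)^me^{-\eta|w|^2}<\infty$ one absorbs $|R_N|$ into a fraction $e^{-\eta\frac{|e^{-s}y-x|^2}{1-e^{-2s}}}$ of the Gaussian part of $M_s(y,x)$; then $M_s(y,x)e^{|x|^2-|y|^2}=M_s(x,y)$ removes the density, the scaling powers collapse via $t^{2N+2}(1-e^{-2t^2/\alpha})^{-(N+1)}=\big(t^2/(1-e^{-2t^2/\alpha})\big)^{N+1}\lesssim1$ on $(0,a]$, Lemma~\ref{lem:slow2}(i) applied with $x$ and $y$ interchanged (so the error factor $\exp\!\big(C\tfrac{t^4|y|^2}{1-e^{-2t^2/\alpha}}\big)\lesssim1$ uses precisely $t\lesssim m(y)$) converts the surviving Gaussian from scale $t^2/\alpha$ to scale $t^2$, and $1-e^{-2t^2/\alpha}\ge\alpha^{-1}(1-e^{-2t^2})$ converts the normalisation, so that (choosing $\eta$ small) this term is $\lesssim\exp\!\big(-\tfrac{\alpha}{4e^{8}}\tfrac{|e^{-t^2}y-x|^2}{1-e^{-2t^2}}\big)M_{t^2}(x,y)$. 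Adding the two contributions gives the asserted bound.

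The only genuinely new point relative to Section~3 is this book-keeping of the extra derivative, and I expect it to be the only real obstacle: one must check both that $\partial_{x_k}$ does not introduce variables outside the three families of Lemma~\ref{lem:K} (so that the hypothesis $t\lesssim m(y)$, rather than $t\lesssim m(x)$, still suffices) and that the extra factor $t$ carried by $t\partial_{x_k}$ exactly compensates the extra power $(1-e^{-2s})^{-1/2}$. The residual $x$-dependence is then confined to the single harmless term $2tx_k\tilde K_{t^2,N,\alpha,j}$, which is where the $1+t|x|$ comes from, while everything else reduces verbatim to Lemma~\ref{lem:est}(iii) and Lemma~\ref{lem:slow2}.
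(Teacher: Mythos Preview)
Your proposal is correct and follows essentially the same approach as the paper. The paper likewise applies the product rule to split $t\partial_{x_k}\tilde K_{t^2,N,\alpha,j}$ into a term with an explicit $x_k$ factor (your $2tx_k\tilde K$) and a term governed by a new polynomial in the same $2n+1$ ``safe'' variables, then absorbs the polynomial into half of the Gaussian and invokes Lemma~\ref{lem:slow2} exactly as in Lemma~\ref{lem:est}(iii); your write-up is in fact slightly more careful than the paper's in tracking the normalisation $(1-e^{-2s})^{-n/2}$ through the scale change.
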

 \begin{proof}
 As in Corollary \ref{cor:Ktilde}, there exists $C_{N}\in \N$ and two polynomials of $2n$ variables $Q_{N},\tilde{Q}_{N}$ of degree $C_{N}$ such that for all $x,y \in \R^{n}$ and $t>0$:
\begin{equation*}
\begin{split}
t\partial _{x_{k}}&\tilde{K}_{t^{2},N,\alpha,j}(x,y) = \\
&t^{2N+2}(1-e^{-\frac{2t^{2}}{\alpha}})^{-(N+1)}\tilde{Q}_{N}((\frac{e^{-\frac{t^{2}}{\alpha}}y_{j}-x_{j}}{\sqrt{1-e^{-\frac{2t^{2}}{\alpha}}}})_{j=1,...,n}  , (\sqrt{1-e^{-\frac{2t^{2}}{\alpha}}}y_{j})_{j=1,...,n})M_{\frac{t^{2}}{\alpha}}(y,x)
exp(|x|^{2}-|y|^{2})\\
+&t^{2N+2}x_{k}(1-e^{-\frac{2t^{2}}{\alpha}})^{-(N+\frac{1}{2})}Q_{N}((\frac{e^{-\frac{t^{2}}{\alpha}}y_{j}-x_{j}}{\sqrt{1-e^{-\frac{2t^{2}}{\alpha}}}})_{j=1,...,n}  , (\sqrt{1-e^{-\frac{2t^{2}}{\alpha}}}y_{j})_{j=1,...,n})M_{\frac{t^{2}}{\alpha}}(y,x)
exp(|x|^{2}-|y|^{2}).
\end{split}
\end{equation*}
Therefore
$$
|t\partial _{x_{k}}\tilde{K}_{t^{2},N,\alpha,j}(x,y)| \lesssim (1+t|x|)exp(-\frac{1}{2}
 \frac{|e^{-\frac{t^{2}}{\alpha}}y-x|^{2}}{1-e^{-\frac{2t^{2}}{\alpha}}})exp(|x|^{2}-|y|^{2}).
$$
Using Lemma \ref{lem:slow2}, and the fact that $t \lesssim m(y)$, we have that
\begin{equation*}
\begin{split}
|t\partial _{x_{k}}\tilde{K}_{t^{2},N,\alpha,j}(x,y)| &\lesssim (1+t|x|) \exp(-\frac{\alpha}{4e^{8}}\frac{|e^{-t^{2}}y-x|^{2}}{1-e^{-2t^{2}}})M_{t^{2}}(y,x)exp(|x|^{2}-|y|^{2})\\&= (1+t|x|)
\exp(-\frac{\alpha}{4e^{8}}\frac{|e^{-t^{2}}y-x|^{2}}{1-e^{-2t^{2}}})M_{t^{2}}(x,y). 
\end{split}
\end{equation*}
 \end{proof}

\begin{proposition}
Let $N \in \N$, $j\in \{1,...,n\}$ and $\alpha>2^{32}$. Let $B=B(c_{B},r_{B})\in \mathcal{B}_{2}$ and $F$ be a $t^{1,2}(\gamma)$ atom $F$ associated with $B$. 
\begin{enumerate}[(i)]
\item
$
\|\int \limits _{0} ^{r_{B}} |t\partial_{x_{k}}(t^{2}L)^{N}e^{\frac{t^{2}}{\alpha}L}t\partial_{x_{j}}^{*}F(t,.)|\frac{dt}{t}\|_{L^{1}(\gamma)} \lesssim 1,
$
\item
$
\|\int \limits _{0} ^{r_{B}} |t\partial_{x_{k}}^{*}(t^{2}L)^{N}e^{\frac{t^{2}}{\alpha}L}t\partial_{x_{j}}^{*}F(t,.)|\frac{dt}{t}\|_{L^{1}(\gamma)} \lesssim 1.
$
\end{enumerate}
\end{proposition}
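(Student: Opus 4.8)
The plan is to reduce both (i) and (ii) to a single pointwise kernel estimate, and then to run the near/far annular analysis already used for the molecule estimate in Proposition~\ref{prop:mol} and for the remainder estimate preceding Proposition~\ref{prop:r1}.

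First I would record the kernel bound that serves both parts. The operator in (i) has kernel $t\partial_{x_k}\tilde K_{t^2,N,\alpha,j}$, while, since $\partial_{x_k}^{*}=-\partial_{x_k}+2x_k$ on $L^{2}(\gamma)$, the operator in (ii) has kernel $-t\partial_{x_k}\tilde K_{t^2,N,\alpha,j}+2tx_k\tilde K_{t^2,N,\alpha,j}$. For $y\in B$ and $0<t\le r_{B}$ one has $t\le r_{B}\le 2m(c_{B})\lesssim m(y)$ by Lemma~\ref{lem:mnp1}, so Lemma~\ref{lem:RTest} together with Lemma~\ref{lem:est}(iii) (both applicable since $\alpha>2^{32}\ge 4e^{8}$) give, in either case,
\[
\bigl|\,t\partial_{x_k}^{(*)}\tilde K_{t^2,N,\alpha,j}(x,y)\bigr|\;\lesssim\;(1+t|x|)\exp\!\Bigl(-\tfrac{\alpha}{4e^{8}}\tfrac{|e^{-t^2}y-x|^{2}}{1-e^{-2t^{2}}}\Bigr)M_{t^{2}}(x,y),\qquad y\in B,\ 0<t\le r_{B},
\]
where $\partial_{x_k}^{(*)}$ stands for $\partial_{x_k}$ or $\partial_{x_k}^{*}$. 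Hence (i) and (ii) are proved by one and the same computation; writing $\Psi_{t}$ for the function being estimated, I would expand $\int_{\R^{n}}\cdots\,d\gamma=\sum_{k\ge 0}\int_{C_{k}(B)}\cdots\,d\gamma$, noting that $1+t|x|\lesssim 2^{k}$ on $C_{k}(B)$ for $t\le r_{B}$ (because $|c_{B}|m(c_{B})\le 1$ and $t\le r_{B}\le 2m(c_{B})$), a harmless factor.

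For the far annuli $k\ge 1$ I would follow Lemma~\ref{lem:od} and the remainder lemma of Section~5 essentially verbatim: bounding $|\Psi_{t}(x)|\le\int_{B}|t\partial_{x_k}^{(*)}\tilde K_{t^2,N,\alpha,j}(x,y)|\,|F(t,y)|\,dy$, using $|x-y|\ge 2^{k-1}r_{B}$ and $t|y|\lesssim 1$ to turn the Gaussian factor into $\exp(-c\,4^{k}(r_{B}/t)^{2})$ with $c\asymp\alpha$ (this is the off-diagonal bound of Lemma~\ref{lem:od} for the operators at hand, obtained by the same Schur argument from the kernel bound above), then Cauchy--Schwarz over $C_{k}(B)$ with $\sqrt{\gamma(C_{k}(B))}\le\sqrt{\gamma(2^{k+1}B)}\lesssim e^{2^{9}4^{k}}\sqrt{\gamma(B)}$ (Lemma~\ref{lem:mm}) and the $L^{2}(\gamma)$-boundedness of $e^{t^{2}L}$, splitting $\exp(-c4^{k}(r_{B}/t)^{2})\le\exp(-\tfrac c2 4^{k})\exp(-\tfrac c2 (r_{B}/t)^{2})$, and finally Cauchy--Schwarz in $t$ against the atom normalisation $\int_{0}^{r_{B}}\|F(t,\cdot)\|_{2}^{2}\tfrac{dt}{t}\le\gamma(B)^{-1}$. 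Absorbing the factor $2^{k}$, the $k$-th annulus contributes $\lesssim 2^{k}\exp\!\bigl(-(\tfrac{\alpha}{2^{23}}-2^{9})4^{k}\bigr)$, which is summable over $k\ge 1$ since $\alpha>2^{32}$.

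For the near annulus $k=0$ off-diagonal decay is unavailable. Here $1+t|x|\lesssim 1$ on $C_{0}(B)$, and I would keep $\Psi_{t}(x)=\int_{B}(t\partial_{x_k}^{(*)}\tilde K_{t^2,N,\alpha,j})(x,y)F(t,y)\,dy$ as a genuine integral and argue as in the case $k=0$ of Proposition~\ref{prop:mol}: Cauchy--Schwarz over $C_{0}(B)$ together with $\gamma(C_{0}(B))=\gamma(B(c_{B},2r_{B}))\lesssim\gamma(B)$ (Lemma~\ref{lem:mm}) reduces the task to an $L^{2}(\gamma)$ estimate on $C_{0}(B)$; testing against $g=\sum_{\beta}c_{\beta}H_{\beta}$ with $\sum|c_{\beta}|^{2}\le 1$, moving $t\partial_{x_k}^{(*)}$, $(t^{2}L)^{N}e^{\frac{t^{2}}{\alpha}L}$ and $t\partial_{x_j}^{*}$ onto $g$ by duality (the $L^{2}$-boundedness of the Riesz transforms) leaves the symbol $(t^{2}|\beta|)^{N+1}e^{-t^{2}|\beta|/\alpha}$, bounded uniformly in $t$ and $\beta$; Cauchy--Schwarz in $t$ then uses $\int_{0}^{\infty}(t^{2}|\beta|)^{2N+1}e^{-2t^{2}|\beta|/\alpha}\tfrac{dt}{t}<\infty$ (finite independently of $\beta$) and again $\int_{0}^{r_{B}}\|F(t,\cdot)\|_{2}^{2}\tfrac{dt}{t}\le\gamma(B)^{-1}$, producing the bound $\lesssim\gamma(B)^{-1/2}$ and hence $\lesssim 1$ after Cauchy--Schwarz with $\sqrt{\gamma(C_{0}(B))}$.

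The step I expect to be the main obstacle is precisely this $k=0$ annulus. In the Section~5 remainder estimates the cut-off $1_{[m(\cdot)/b,2]}(t)$ keeps $t$ within a bounded ratio of $r_{B}$ on the relevant region, which trivialises the $t$-integral; here the $t$-integral descends to $0$, so one must exploit the smoothing of $(t^{2}L)^{N}e^{\frac{t^{2}}{\alpha}L}$ and the cancellation carried by the Riesz-type factors quantitatively, in the spirit of Proposition~\ref{prop:mol}, and this is what makes the near region delicate. Everything else -- the far annuli and the passage between $\partial_{x_k}$ and $\partial_{x_k}^{*}$ -- is routine and follows patterns already established in the paper.
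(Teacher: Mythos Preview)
Your proposal is essentially identical to the paper's own proof. Both arguments decompose the $L^{1}$ norm over the annuli $C_{l}(B)$, pass to $L^{2}$ via Cauchy--Schwarz and Lemma~\ref{lem:mm}, treat the near annulus $l=0$ by duality using the $L^{2}$-boundedness of the Riesz transforms together with the square-function bound coming from chaos decomposition (exactly as in Proposition~\ref{prop:mol}), and treat the far annuli $l\ge 1$ by the off-diagonal estimate derived from Lemma~\ref{lem:RTest} while absorbing the extra factor $(1+t|x|)\lesssim 2^{l}$ on $C_{l}(B)$; the reduction of (ii) to (i) via $\partial_{x_k}^{*}=-\partial_{x_k}+2x_k$ is also the same, the paper phrasing it as ``the same argument also gives the $tx$ term''.
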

\begin{proof}
For $l \in \Z_{+}$, we have, using Lemma \ref{lem:mm}:
\begin{equation*}
\begin{split}
\|1_{C_{l}(B)}\int \limits _{0} ^{r_{B}}|t\partial_{x_{k}}(t^{2}L)^{N}e^{\frac{t^{2}}{\alpha}L}t\partial_{x_{j}}^{*}F(t,.)|\frac{dt}{t}\|_{L^{1}(\gamma)} &\lesssim \sqrt{\gamma(2^{l+1}B)}\|1_{C_{l}(B)}\int \limits _{0} ^{r_{B}}|t\partial_{x_{k}}(t^{2}L)^{N}e^{\frac{t^{2}}{\alpha}L}t\partial_{x_{j}}^{*}F(t,.)|\frac{dt}{t}\|_{L^{2}(\gamma)}\\
&\lesssim 2^{2^{9}.4^{l}}\sqrt{\gamma(B)}\|1_{C_{l}(B)}\int \limits _{0} ^{r_{B}}|t\partial_{x_{k}}(t^{2}L)^{N}e^{\frac{t^{2}}{\alpha}L}t\partial_{x_{j}}^{*}F(t,.)|\frac{dt}{t}\|_{L^{2}(\gamma)}.
\end{split}
\end{equation*}
For $l=0$, we use the $L^2$ boundedness of $R_{j}$, and duality.
\begin{equation*}
\begin{split}
\|1_{C_{0}(B)}\int \limits _{0} ^{r_{B}}|t\partial_{x_{k}}(t^{2}L)^{N}e^{\frac{t^{2}}{\alpha}L}t\partial_{x_{j}}^{*}F(t,.)|\frac{dt}{t}\|_{L^{2}(\gamma)}
&\lesssim (\int \limits _{0} ^{r_{B}} \int \limits _{B} |F(t,x)|^{2} d\gamma(x)\frac{dt}{t})^{\frac{1}{2}}
\underset{\|g\|_{2}\leq 1}{\sup} (\int \limits _{0} ^{r_{B}} 
\|(t^{2}L)^{N+1}e^{\frac{t^{2}}{\alpha}L}R_{k}^{*}g\|_{L^{2}(\gamma)}^{2} \frac{dt}{t} )^{\frac{1}{2}} \\
&\lesssim \gamma(B)^{-\frac{1}{2}} \underset{\|g\|_{2}\leq 1}{\sup} \|R_{k}^{*}g\|_{L^{2}(\gamma)} \lesssim \gamma(B)^{-\frac{1}{2}},
\end{split}
\end{equation*}
where we have used chaos decomposition (or the $L^{2}$ functional calculus of $L$) as in the proof of Proposition \ref{prop:mol}.
For $l>0$, we use off-diagonal estimates, obtained from Lemma \ref{lem:RTest} as in Lemma \ref{lem:od}, and the fact that $|r_{B}x| \lesssim r_{B}|x-c_{B}|+1 \lesssim 2^{l}$ for all  $x \in C_{l}(B)$.
\begin{equation*}
\begin{split}
\|1_{C_{l}(B)}\int \limits _{0} ^{r_{B}}|t\partial_{x_{k}}(t^{2}L)^{N}e^{\frac{t^{2}}{\alpha}L}t\partial_{x_{j}}^{*}F(t,.)|\frac{dt}{t}\|_{L^{2}(\gamma)}
&\lesssim 2^{l}\int \limits _{0} ^{r_{B}} exp(-\frac{\alpha}{2^{6}e^{8}}4^{l}(\frac{r_{B}}{t})^{2})
\|F(t,.)\|_{L^{2}(\gamma)} \frac{dt}{t}\\
& \lesssim 2^{l} exp(-\frac{\alpha}{2^{23}}4^{l})
(\int \limits _{0} ^{1} exp(-\frac{\alpha}{2^{22}}4^{l}(\frac{1}{t})^{2})
 \frac{dt}{t})^{\frac{1}{2}} \gamma(B)^{-\frac{1}{2}}\\
& \lesssim 2^{l} exp(-\frac{\alpha}{2^{23}}4^{l})\gamma(B)^{-\frac{1}{2}}.
\end{split}
\end{equation*}
Summing in $l$ gives (i).\\
The same argument also gives 
$
\|x \mapsto \int \limits _{0} ^{r_{B}} |tx(t^{2}L)^{N}e^{\frac{t^{2}}{\alpha}L}t\partial_{x_{j}}^{*}F(t,.)|\frac{dt}{t}\|_{L^{1}(\gamma)} \lesssim 1,
$ and thus (ii).
\end{proof}
We now turn to the remainder terms. With exactly the same proof as Proposition \ref{prop:Dcomp} ,we get the following.
\begin{proposition}
Let $N \in \Z_{+}$, $j \in \{1,...,n\}$ and $\alpha > \max(32e^{4},8e^{8})$. Let $b \geq 
\max(2e,\sqrt{\frac{32e^{4}}{(\alpha-32e^{4})(1-e^{-\frac{8}{\alpha}})}})$.
Then 
\begin{enumerate}[(i)]
\item
$
\|\int \limits _{0} ^{\frac{m(.)}{b}} t\partial_{x_{k}}t^{2N+1}L^{N}e^{\frac{t^{2}}{\alpha}L}\partial_{x_{j}}^{*}(1_{D^{c}}(t,.)t\partial_{x_{j}}e^{\frac{4t^{2}}{\alpha}L})u \frac{dt}{t}\|_{L^{1}(\gamma)} \lesssim \|u\|_{L^{1}(\gamma)}. $
\item
$
\|\int \limits _{0} ^{\frac{m(.)}{b}}  t\partial_{x_{k}}^{*}t^{2N+1}L^{N}e^{\frac{t^{2}}{\alpha}L}\partial_{x_{j}}^{*}(1_{D^{c}}(t,.)t\partial_{x_{j}}e^{\frac{4t^{2}}{\alpha}L})u \frac{dt}{t}\|_{L^{1}(\gamma)} \lesssim \|u\|_{L^{1}(\gamma)}. $
\end{enumerate}
\end{proposition}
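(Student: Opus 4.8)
\emph{Proof proposal.} The plan is to imitate the proof of Proposition~\ref{prop:Dcomp} line by line. As there, since $\gamma$ is a probability measure we have $\|G\|_{L^1(\gamma)}\le\|G\|_\infty$, so it suffices to prove both estimates with the $L^1(\gamma)$ norm replaced by the supremum norm. Fix $x\in\R^n$. The cut-off $1_{D^c}(t,\cdot)$ together with the upper limit $m(x)/b$ of the $t$-integral confines the analysis to pairs $(t,y)$ with $m(y)\le t\le m(x)/b$, and, exactly as in Proposition~\ref{prop:Dcomp}, the hypothesis $b\ge 2e$ then forces $|y|\ge 1$, $|y|\ge b|x|\ge 2e|x|$, $t^{-1}\le|y|$, $|e^{-t^2/\alpha}y-x|\ge|y|/2$, and $t|x|\le m(x)|x|/b\le 1/b$.

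For (i): one cannot quote Lemma~\ref{lem:RTest} directly, since its hypothesis $t\lesssim m(y)$ is violated on the region $\{m(y)\le t\}$; instead I use the exact identity for $t\partial_{x_k}\tilde K_{t^2,N,\alpha,j}(x,y)$ written out in the first display of the proof of Lemma~\ref{lem:RTest}, which holds for all $x,y$ and $t>0$. Bounding $t^{-1}$ times that expression in the above range, every negative power of $t$ is controlled by a power of $|y|$ (using $t^{-1}\le|y|$), the factor $x_k$ by $|y|$ (using $|x_k|\le|x|\le|y|/(2e)$, so also $1+t|x|\lesssim 1$), and all polynomial prefactors are absorbed into the Gaussian $M_{t^2/\alpha}(y,x)$, which here provides decay $\exp(-c\alpha|y|^2/t^2)\le\exp(-c\alpha|y|^2)$; carrying out the same chase of numerical constants as in Proposition~\ref{prop:Dcomp}, with the exponent data reflecting the value $a^2=4$ (i.e.\ the semigroup $e^{4t^2/\alpha L}$), this yields a pointwise bound
\[
t^{-1}\big|t\partial_{x_k}\tilde K_{t^2,N,\alpha,j}(x,y)\big|\ \lesssim\ \exp\!\big(-c\,|y|^2\big)
\]
for a constant $c>0$ which is a fixed positive multiple of $\alpha$, uniformly in $x\in\R^n$ and in $t\in(0,m(x)/b]$ with $t\ge m(y)$.

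With this in hand the remaining steps are verbatim those of Proposition~\ref{prop:Dcomp}: insert the bound, expand $t\partial_{x_j}e^{4t^2/\alpha L}u(y)$ through its kernel (a first-order factor times $M_{4t^2/\alpha}(y,z)$), apply Lemma~\ref{lem:slow2} (with $\alpha$ replaced by $\alpha/4$) to trade $M_{4t^2/\alpha}(y,z)$- for $M_{t^2}(y,z)$-type decay at the cost of $\exp\!\big(C\tfrac{t^4|y|^2}{1-e^{-8t^2/\alpha}}\big)$, bound this last factor by $\exp\!\big(\tfrac{C}{1-e^{-8/\alpha}}|y|^2\big)$ using the monotonicity of $t\mapsto\tfrac{t^2}{1-e^{-8t^2/\alpha}}$ on $(0,1)$, and observe that the lower bound imposed on $b$ is precisely what makes the net Gaussian coefficient in $|y|^2$ less than or equal to $-1$. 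What is left is $\lesssim\int_0^{1/b}\int_{\R^n}e^{t^2L}|u|(y)\,d\gamma(y)\,dt\lesssim\|u\|_{L^1(\gamma)}$, which is (i). For (ii) write $t\partial_{x_k}^*=-t\partial_{x_k}+2tx_k$ on $L^2(\gamma)$: the first term gives the integrand of (i), and the second has kernel $2x_k\,t^{-1}\tilde K_{t^2,N,\alpha,j}(x,y)$, which (since $|2tx_k|\le 2/b$ and $|x_k|\le|x|\le|y|/(2e)$) satisfies the same pointwise bound and is handled identically.

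\emph{Main obstacle.} There is no real difficulty; the argument is essentially a copy of Proposition~\ref{prop:Dcomp}. The one point that requires attention is that Lemma~\ref{lem:RTest} cannot be used as a black box on the region $\{m(y)\le t\}$, so one must return to the kernel identity inside its proof and rerun the pointwise estimate using $|y|\ge\max(t^{-1},2e|x|)$; after that the only work is the routine bookkeeping of numerical constants, which parallels that of Proposition~\ref{prop:Dcomp} with the exponent data adjusted to the $e^{5t^2/\alpha L}$ and $e^{4t^2/\alpha L}$ of the Riesz reproducing formula.
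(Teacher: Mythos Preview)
Your proposal is correct and follows exactly the route the paper intends: the paper's own ``proof'' is the single sentence ``With exactly the same proof as Proposition~\ref{prop:Dcomp}, we get the following,'' and your write-up is a faithful execution of that, including the correct observation that on the region $\{m(y)\le t\}$ one must return to the kernel identity in the proof of Lemma~\ref{lem:RTest} (just as Proposition~\ref{prop:Dcomp} uses Corollary~\ref{cor:Ktilde} directly rather than quoting Lemma~\ref{lem:est}(iii)). Your treatment of (ii) via $t\partial_{x_k}^*=-t\partial_{x_k}+2tx_k$ together with $|tx_k|\le 1/b$ is also the intended one.
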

The final estimate is obtained as in Proposition \ref{prop:jinf}.
\begin{proposition}
Let $N \in \Z_{+}$, $b>0$, and $\alpha>4e^{8}$. For all $u\in C_{c}^{\infty}(\R^{n})$, we have
\begin{enumerate}[(i)]
\item
$
\| \int\limits _{\frac{m(.)}{b}} ^{\infty} t\partial_{x_{k}}(t^{2}L)^{N+1}e^{\frac{5t^{2}}{\alpha}L}u \frac{dt}{t}\|_{L^{1}(\gamma)} \lesssim \|u\|_{L^{1}(\gamma)}.
$
\item
$
\| \int\limits _{\frac{m(.)}{b}} ^{\infty} t\partial_{x_{k}}^{*}(t^{2}L)^{N+1}e^{\frac{5t^{2}}{\alpha}L}u \frac{dt}{t}\|_{L^{1}(\gamma)} \lesssim \|u\|_{L^{1}(\gamma)}.
$
\end{enumerate}
\end{proposition}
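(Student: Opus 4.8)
The plan is to run the argument of Proposition \ref{prop:jinf} again, with one simplification and one new ingredient, carrying it out for (i); (ii) follows in the same way after replacing $\partial_{x_k}$ by its adjoint $\partial_{x_k}^{*}=-\partial_{x_k}+2x_k$, the extra term only multiplying the kernel by $2x_k$, which is harmless since $t|x|\lesssim m(x)|x|\le 1$ on the region that matters (this is exactly how (ii) was deduced from (i) in the two preceding Propositions). The simplification is that the target norm is now $L^{1}(\gamma)$ rather than $h^{1}_{\text{max},a'}$, so the part of the proof of Proposition \ref{prop:jinf} dealing with the outer supremum over admissible cones is not needed, and only its ``local/global'' estimates in the single space variable survive. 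The new ingredient is the factor $t\partial_{x_k}$: one records, from Corollary \ref{cor:Ktilde} and Lemma \ref{lem:slow2} and exactly as in Lemma \ref{lem:RTest}, the bound $|t\partial_{x_k}K_{t^{2},N,\alpha}(x,y)|\lesssim(1+t|x|)\exp\!\big(-c\,|e^{-t^{2}}x-y|^{2}/(1-e^{-2t^{2}})\big)M_{t^{2}}(x,y)$, valid when $t\lesssim m(x)$.

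First I would reduce to $\int u\,d\gamma=0$ (the operator kills constants, and $\|u-\int u\,d\gamma\|_{1}\le 2\|u\|_{1}$). Fixing $x$ and substituting $s=\tfrac{5t^{2}}{\alpha}$ turns the integral into $c_{\alpha,N}\int_{A(x)}^{\infty}s^{N+\frac12}\,\partial_{x_k}\partial_{s}^{N+1}(e^{sL}u)(x)\,ds$ with $A(x):=\tfrac{5}{b^{2}\alpha}m(x)^{2}$. Integrating by parts in $s$ finitely many times as in Proposition \ref{prop:jinf}, the boundary terms at $s=\infty$ vanish (for $j\ge 1$ because $L^{j}e^{sL}u\to 0$; for $j=0$ because $\partial_{x_k}e^{sL}u=\partial_{x_k}e^{\frac s2L}\big(e^{\frac s2L}u-\int u\,d\gamma\big)$ decays like $s^{-1/2}e^{-s/2}$, using the $L^{2}$ boundedness of $R_{k}$ and $\|L^{1/2}e^{\frac s2L}\|\lesssim s^{-1/2}$). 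What survives is a finite linear combination of the boundary terms $A(x)^{j+\frac12}\,\partial_{x_k}L^{j}e^{A(x)L}u(x)$, $0\le j\le N$, together with a residual half-order integral $\int_{A(x)}^{\infty}s^{-\frac12}\,\partial_{x_k}e^{sL}u(x)\,ds$. This last term has no counterpart in Proposition \ref{prop:jinf} --- there the $s$-weight was an integer power --- and it is where the Riesz transform's $L^{-1/2}$ appears.

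For the boundary terms, $t=m(x)/b\lesssim m(x)$ gives $1+t|x|\lesssim 1$, and $1-e^{-2A(x)}\sim A(x)\sim m(x)^{2}$, so the kernel bound above yields a pointwise domination of the form $C\,m(x)^{-n}\exp(-c\,m(x)^{-2}|e^{-A(x)}x-y|^{2})$; on $\{m(x)/b>m(y)\}$, where $|y|$ is large relative to $|x|$ (as in the proof of Proposition \ref{prop:Dcomp}), the Gaussian in $y$ absorbs every polynomial factor, and on the complement the hypothesis $t\lesssim m(y)$ of the kernel bound holds. The residual integral is treated the same way after differentiating under the integral sign: the contribution of the moving lower limit is $\lesssim|\partial_{x_k}m(x)|\,m(x)^{-1}e^{A(x)L}|u|(x)\lesssim e^{A(x)L}|u|(x)$ (since $|\partial_{x_k}m(x)|\le m(x)^{2}$), while the pieces $\int_{A(x)}^{1}$ and $\int_{1}^{\infty}$ are controlled using $1-e^{-2s}\sim s$ and $\sup_{w>0}w^{m}e^{-cw^{2}}<\infty$ on the first (as in Lemma \ref{lem:est}) and the decay $e^{-s}$ coming from $\partial_{x_k}M_{s}$ on the second. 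Splitting all the resulting kernels into $N_{\tau}$-local and $N_{\tau}^{c}$-global parts exactly as in Proposition \ref{prop:jinf}, the global parts are absorbed by Proposition \ref{prop:glob} into $\lesssim\|u\|_{1}$, while the local parts are dominated by $c\,m(\cdot)^{-n}\int_{B(\cdot,\lambda m(\cdot))}|u(y)|\,dy$, whose $L^{1}(\gamma)$ norm is $\lesssim\|u\|_{L^{1}(\gamma)}$ because $e^{-|x|^{2}}\sim e^{-|y|^{2}}$ on admissible balls and by Fubini --- the last computation in the proof of Proposition \ref{prop:jinf}.

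The main obstacle will be the residual half-order integral $\int_{A(x)}^{\infty}s^{-1/2}\partial_{x_k}e^{sL}u(x)\,ds$, which is new relative to Proposition \ref{prop:jinf}: one must verify that, once the moving-limit boundary term is split off, what remains still decomposes as an admissible-scale local averaging operator plus a kernel that is integrable against $d\gamma(x)$ uniformly in $y$, so that the local/global scheme applies verbatim.
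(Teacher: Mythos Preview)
Your overall strategy --- truncate at $M$, change variables $s=5t^{2}/\alpha$, integrate by parts, bound the lower boundary terms by $\int M_{cm(x)^{2}}(x,y)|u(y)|\,dy$, and then feed this into the local/global argument of Proposition~\ref{prop:jinf} --- is exactly the paper's. You are also right that the extra factor $t$ turns the weight $s^{N}$ of Proposition~\ref{prop:jinf} into $s^{N+1/2}$, so that after $N+1$ integrations by parts a residual $c\int_{A(x)}^{\infty}s^{-1/2}\partial_{x_{k}}e^{sL}u(x)\,ds$ survives; the paper's displayed chain of inequalities records only the $N+1$ boundary terms at each endpoint and does not visibly treat this residual.

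Your handling of the residual, though, has a gap. Pulling $\partial_{x_{k}}$ outside the $s$-integral does split off the moving-lower-limit term you describe, but what remains is $\partial_{x_{k}}\big[\int_{A(\cdot)}^{\infty}s^{-1/2}e^{sL}u\,ds\big]$, still a derivative of an $x$-dependent quantity --- the step is circular rather than a reduction. And the direct kernel bound on the piece $\int_{1}^{\infty}$ is not strong enough: from $\partial_{x_{k}}M_{s}(x,y)=-2e^{-s}(e^{-s}x_{k}-y_{k})(1-e^{-2s})^{-1}M_{s}(x,y)$ and the symmetry $M_{s}(x,y)e^{-|x|^{2}}=M_{s}(y,x)e^{-|y|^{2}}$ one computes
\[
\int_{\R^{n}}\int_{1}^{\infty}s^{-1/2}|\partial_{x_{k}}M_{s}(x,y)|\,ds\,d\gamma(x)\ \lesssim\ (1+|y_{k}|)\,e^{-|y|^{2}},
\]
so putting absolute values inside yields only $\int(1+|y|)|u|\,d\gamma$, not $\|u\|_{L^{1}(\gamma)}$. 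Neither the mean-zero reduction (which kills only the constant mode, not the first Hermite mode producing the $|y_{k}|$) nor Proposition~\ref{prop:glob} (which concerns the Mehler kernel itself, not this $s$-integrated kernel) removes that weight. The residual is therefore the genuine obstacle here, and it needs an idea different from the ones you sketch.
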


\begin{proof}
Let $M>0$ and $x \in \R^{n}$. Using Corollary \ref{cor:Ktilde} and Lemma \ref{lem:slow2}, we have that
\begin{equation*}
\begin{split}
|\int\limits _{\frac{m(x)}{b}} ^{M} & t\partial_{x_{k}}(t^{2}L)^{N+1}e^{\frac{5t^{2}}{\alpha}L}u \frac{dt}{t}|
\lesssim 
|\int\limits _{\frac{5m(x)^{2}}{b^{2}\alpha}} ^{\frac{5M^{2}}{\alpha}} s^{N+\frac{1}{2}}
\int \limits _{\R^{n}} \partial_{x_{k}}\partial_{s}^{N+1}M_{s}(x,y)u(y)dyds| \\
& \lesssim
\sum \limits _{l=0} ^{N}
\int \limits _{\R^{n}} Q_{l}(1,
(\frac{e^{-\frac{5m(x)^{2}}{b^{2}\alpha}}x_{j}-y_{j}}{\sqrt{1-e^{-2\frac{5m(x)^{2}}{b^{2}\alpha}}}})_{j=1,...,n}  , (\sqrt{1-e^{-2\frac{5m(x)^{2}}{b^{2}\alpha}}}x_{j})_{j=1,...,n})
M_{\frac{5m(x)^{2}}{b^{2}\alpha}}(x,y)u(y)|dy \\
& \qquad + \sum \limits _{l=0} ^{N} |M^{2l+1}\partial_{x_{k}}L^{l}e^{\frac{5}{\alpha}M^{2}L}u(x)|\\
& \lesssim
\int \limits _{\R^{n}} exp(-\frac{\alpha}{4e^{8}}
\frac{|e^{-\frac{5m(x)^{2}}{b^{2}}}x-y|^{2}}{1-e^{-2\frac{5m(x)^{2}}{b^{2}}}})
|u(y)|dy 
+\sum \limits _{l=0} ^{N} |M^{2l+1}\partial_{x_{k}}L^{l}e^{\frac{5}{\alpha}M^{2}L}u(x)|\\
&\lesssim \int \limits _{\R^{n}} M_{\frac{5m(x)^{2}}{b^{2}}}(x,y)|u(y)|dy 
+\sum \limits _{l=0} ^{N} |M^{2l+1}\partial_{x_{k}}L^{l}e^{\frac{5}{\alpha}M^{2}L}u(x)|.
\end{split}
\end{equation*}
Using chaos decomposition, this gives
$$
\| \int\limits _{\frac{m(.)}{b}} ^{M} t\partial_{x_{k}}(t^{2}L)^{N+1}e^{\frac{5t^{2}}{\alpha}L}u \frac{dt}{t}\|_{L^{1}(\gamma)} \lesssim
\int \limits_{\R^{n}} \int \limits _{\R^{n}} M_{\frac{5m(x)^{2}}{b^{2}}}(x,y)|u(y)|dy d\gamma(x)
+\sum \limits _{l=0} ^{N} M^{2l+1}e^{-\frac{5}{\alpha}M^{2}} \|u\|_{L^{2}(\gamma)},
$$
and thus, letting $M$ go to infinty
$$
\| \int\limits _{\frac{m(.)}{b}} ^{\infty} t\partial_{x_{k}}(t^{2}L)^{N+1}e^{\frac{5t^{2}}{\alpha}L}u \frac{dt}{t}\|_{L^{1}(\gamma)} \lesssim
\int \limits_{\R^{n}} \int \limits _{\R^{n}} M_{\frac{5m(x)^{2}}{b^{2}}}(x,y)|u(y)|dy d\gamma(x).
$$
The proof of \ref{prop:jinf} gives
$$
\int \limits_{\R^{n}} \int \limits _{\R^{n}} M_{\frac{5m(x)^{2}}{b^{2}}}(x,y)|u(y)|dy d\gamma(x)
\lesssim \|u\|_{L^{1}(\gamma)},
$$
which concludes the proof of (i). The same proof also gives (ii), using that $|xm(x)| \leq 1$ for all $x \in \R^{n}$.
\end{proof}

{\flushleft{\sc Pierre Portal}}\\
Permanent Address:\\
Universit\'e Lille 1, Laboratoire Paul Painlev\'e, F-59655 {\sc Villeneuve d'Ascq}.\\
Current Address:\\
Australian National University, Mathematical Sciences Institute, John Dedman Building, 
Acton ACT 0200, Australia.\\
{\tt pierre.portal@anu.edu.au}\\

\end{document}